\documentclass[reqno]{amsart}
\usepackage{amsmath, amssymb, amsfonts, amsthm}
\usepackage[english]{babel}
\usepackage{hyperref}
\addto\extrasenglish{%
}
\usepackage{xcolor}
\usepackage{graphicx}
\usepackage{cleveref}
\usepackage[utf8]{inputenc}

\usepackage[margin=1.15in]{geometry}

\usepackage[normalem]{ulem}

\makeatletter
\newtheorem*{rep@theorem}{\rep@title}
\newcommand{\newreptheorem}[2]{%
\newenvironment{rep#1}[1]{%
 \def\rep@title{#2 \ref{##1}}%
 \begin{rep@theorem}}%
 {\end{rep@theorem}}}
\makeatother

\theoremstyle{plain}
\newtheorem{theorem}{Theorem}[section] 
\newtheorem{corollary}[theorem]{Corollary}
\newreptheorem{theorem}{Corollary}
\newtheorem{lemma}[theorem]{Lemma}
\newtheorem{proposition}[theorem]{Proposition}

\theoremstyle{remark}
\newtheorem{remark}[theorem]{Remark}

\usepackage{bm}

\newcommand{\prob}[1]{\mathbb{P}\left[#1\right]}
\newcommand{\E}[1]{\mathbb{E}\left[#1\right]}
\newcommand{\limn}[1]{\lim\limits_{n\rightarrow\infty}}

\def\Im{\mathop{\mathrm{Im}}\nolimits}

\def\Z{\mathrm{Z}}
\def\N{\mathbb{N}}
\def\H{\mathbb{H}}

\def\dist{\mathrm{dist}}
\def\diam{\mathrm{diam}}
\def\cle{\mathrm{CLE}}

\def\ls{\mathcal{L}}
\def\bls{\mathcal{B}}

\def\cro{\mathrm{Cross}}
\def\clu{\mathrm{Clus}}
\def\com{\mathrm{Comp}}
\def\lm{\mu^{\text{loop}}}

\usepackage{mathrsfs, stmaryrd, mathtools}

\DeclarePairedDelimiterX\intff[2]{[}{]}{#1,#2}
\DeclarePairedDelimiterX\intfo[2]{[}{)}{#1,#2}
\DeclarePairedDelimiterX\intof[2]{(}{]}{#1,#2}
\DeclarePairedDelimiterX\intoo[2]{(}{)}{#1,#2}

\DeclarePairedDelimiterX{\setof}[2]{\lbrace}{\rbrace}{#1\,{:}\,#2}
\DeclarePairedDelimiterX{\bracksof}[2]{[}{]}{#1\,\delimsize\vert\,#2}
\DeclarePairedDelimiterX{\parsof}[2]{(}{)}{#1\,\delimsize\vert\,#2}
\DeclarePairedDelimiterXPP\lnorm[2]{}\lVert\rVert{_{#1}}{#2}

\begin{document}

\title{On the crossing estimates for simple conformal loop ensembles}
\maketitle

\begin{center}
\author{Tianyi Bai\footnote{NYU-ECNU Institute of Mathematical Sciences at NYU Shanghai, China.} 
\and 
Yijun Wan\footnote{Département de Mathématiques et Applications, École Normale Supérieure PSL Research University,
CNRS UMR 8553, 45 rue d’Ulm, Paris, France.}}
\end{center}

\begin{abstract}
We prove a super-exponential decay of probabilities that there exist $n$ crossings of a given quad for a simple $\cle_\kappa(\Omega)$, $\frac{8}{3}<\kappa\le 4$, as $n$ goes to infinity. Besides, being of independent interest, this also provides the missing ingredient in \cite{basok-chelkak} for proving the convergence of probabilities of cylindrical events for the double-dimer loop ensembles to those for the nested $\cle_4(\Omega)$.
\end{abstract}

\section{Introduction}
In statistical physics, crossing-type estimates or regularity properties of the continuum limiting objects can be instrumental to study the scaling limits of certain models. In our paper, we are interested in the crossing numbers of simple conformal loop ensembles $\cle_\kappa$, $\frac{8}{3}<\kappa\le 4$. Let $\Omega$ be a simply connected subdomain of the upper half-plane $\H$ and $\cle_\kappa(\Omega)$ be a non-nested simple conformal loop ensemble in $\Omega$ with $\frac{8}{3}<\kappa\le 4$. The main result of the present paper is on the super-exponential decay, as $n\to\infty$, of the probability of finding $n$ crossings of a fixed annulus $A(r,R)$ ($\Omega\cap A(r,R)$ needs not to be connected, see Figure \ref{pic:1}) or of a fixed quad $Q$ with two opposite sides attached to $\partial\Omega$ for $\cle_\kappa(\Omega)$ (see Figure \ref{pic:quad}).

\begin{figure}[ht]
\centering
\includegraphics[width=0.5\textwidth]{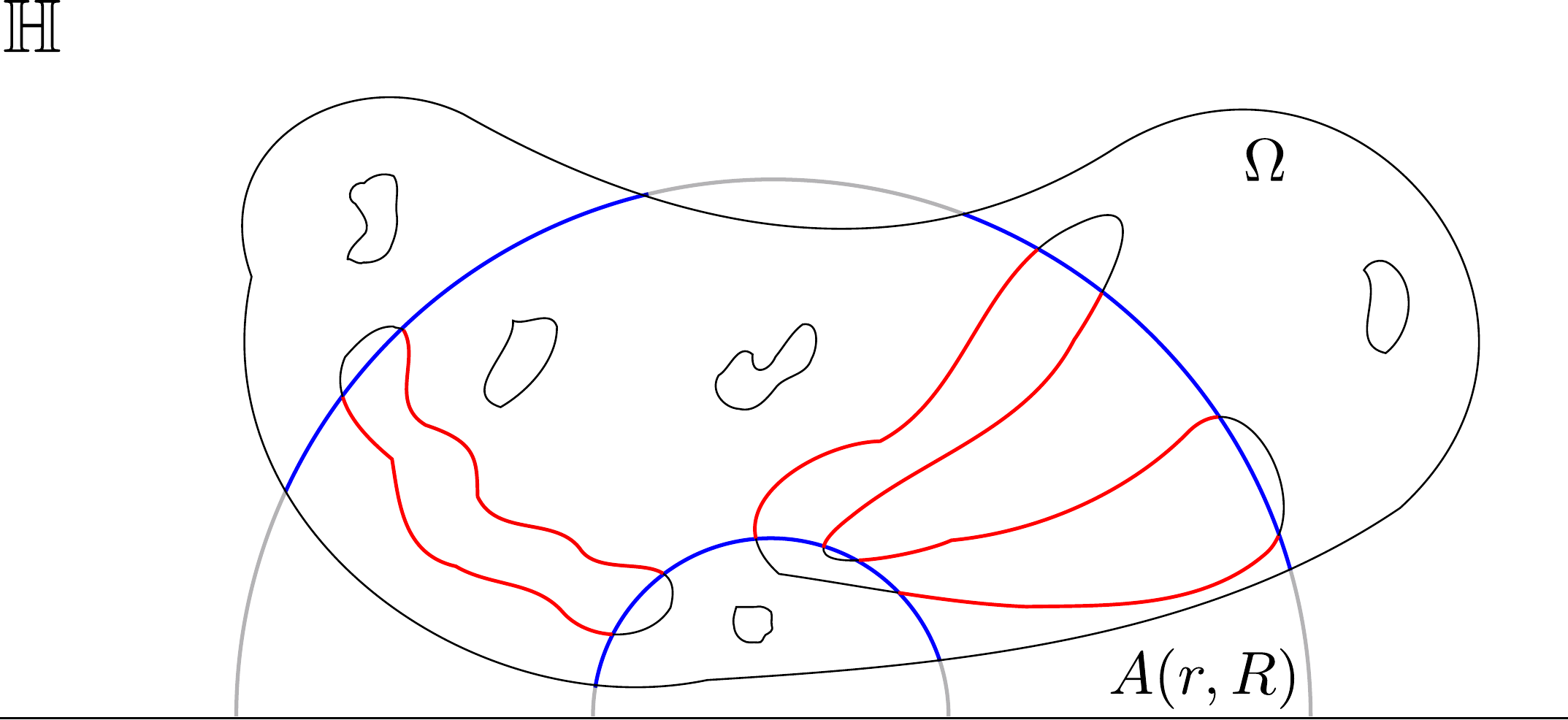}
\caption{
In this illustration, crossings connect the opposite blue arcs of $\partial A(r,R)$, and we have $6$ crossings given by the red paths.}
\label{pic:1}
\end{figure}

\subsection{Background on CLEs}

Conformal loop ensemble, $\cle_\kappa$ for $\frac{8}{3}<\kappa< 8$, is a random countable collection of loops in a (simply connected) planar domain $\Omega\neq\mathbb{C}$, which can be viewed as the full-picture version of the Schramm-Loewner evolution (SLE). It was introduced by Sheffield in \cite{sheffield} as candidates for the scaling limits of certain statistical physics models at critical temperature, which can be interpreted as random collections of disjoint, non-self-crossing loops. $\cle_\kappa$ is shown to be the scaling limit of : critical Ising model $\kappa=3$ \cite{benoist-hongler}, FK-Ising percolation $\kappa=16/3$ \cite{kemppainen-smirnovII}, and critical site percolation on the triangular lattice $\kappa=6$ \cite{camia-newman}. Beyond these, $\cle_\kappa, \frac{8}{3}<\kappa\le4$, is conjectured to describe the scaling limit of the loop $O(n)$ model if $n=-2\cos(4\pi/\kappa)\in(0,2]$ while $\cle_{\kappa}, 4<\kappa<8$, is conjectured to be the scaling limit of the FK($q$)-percolation model if $q= 4\cos^2(4\pi/\kappa)$.

CLE is characterized by a parameter $\kappa\in(8/3,8)$, describing the density of loops in it. All loops of a sample of $\cle_\kappa$ are simple, do not intersect each other, and do not intersect the domain boundary when $\kappa\in(\frac{8}{3},4]$.  When $\kappa\in(4,8)$, the loops are self-intersecting (but not self-crossing) and may touch (but not cross) other loops and the domain boundary.

Since such critical models are expected to be conformally invariant on large distance scales, CLEs are defined to be conformally invariant: if $\varphi: \Omega \rightarrow \Omega'$ is a conformal map and $\Gamma$ is a $\cle_\kappa$ in $\Omega$, then $\varphi(\Gamma)$ is a $\cle_\kappa$ in $\Omega'$.

For each $\kappa$, there are two versions of CLEs: \emph{non-nested} and \emph{nested}, the latter is obtained from the former by recursively iterating the construction inside each loop constructed in the previous step. In this article, we are mainly interested in the non-nested $\cle_\kappa$ for $\kappa\in(\frac{8}{3},4]$ (except for Section \ref{sec:complexity}, where we consider nested CLEs). CLEs can be constructed using one of the two natural conformally invariant probability measures on curves, the Brownian motion (BM) and the Schramm-Loewner evolution (SLE). The BM-based construction is the main tool that we will use in this paper, see Section \ref{sec:Brownian loop measure} and Section \ref{sec:loop-soup} below.
In this approach, the non-nested simple $\cle_\kappa$, $\frac{8}{3}<\kappa\le 4$, is obtained as the collection of outermost boundaries of clusters appearing in a Poisson process of Brownian loops. It is worth noting that this construction admits a discretization: the scaling limit of outer boundaries of outermost clusters of random walk loop-soup is a CLE. Such convergence was first discovered in \cite{brug-camia-lis}, focusing on the scaling limit of outermost boundaries of clusters of loops with some microscopic loops neglected. Then the convergence of outermost boundaries of clusters of the full loop ensemble was proved in \cite{lupu}, by considering the special case $\kappa=4$, using its connection to the Gaussian free field (GFF): $\cle_4$ loops are the “level lines” of the GFF \cite{qian-werner}.

\subsection{Super-exponential decaying crossing estimates for non-nested simple $\cle$}
\label{sec:1.2}

The main result in this paper is the following.
\begin{theorem}\label{thm:main}
Given a simply connected domain $\Omega$ of the upper half-plane $\H=\setof{z\in\mathbb C}{\Im z> 0}$, let $\cle_{\kappa}(\Omega)$ be a \emph{non-nested} simple conformal loop ensemble with $\kappa\in(\frac{8}{3},4]$ in $\Omega$.
Let $0<r<R$, denote by $\cro_{A(r,R)}(\cle_\kappa(\Omega))$ the number of disjoint arcs in $\cle_\kappa(\Omega)$ joining the two boundaries of $A(r,R):=\setof{z\in\mathbb C}{r<|z|<R}$, see Figure \ref{pic:1}. Then for any $s>0$, 
\[
\sup_{\Omega\subseteq\mathbb H}\mathbb{P}[\cro_{A(r,R)}(\cle_\kappa(\Omega))\ge n]= O(s^n),
\]
where the supremum is taken over all simply connected domains $\Omega\subseteq\mathbb H$, and the constant in $O(s^n)$ depends on $\kappa$, $s$ and $R/r$.
\end{theorem}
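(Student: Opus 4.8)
The plan is to reduce the crossing estimate to an estimate on the Brownian loop soup, and then to exploit the independence (Poissonian) structure of the loop soup together with a "cost of many crossings" argument. Since $\cle_\kappa(\Omega)$ for $\kappa\in(\tfrac83,4]$ is realized as the collection of outer boundaries of clusters in a Brownian loop soup of intensity $c=c(\kappa)\le 1$, a crossing of the annulus $A(r,R)$ by a $\cle$ loop forces the corresponding cluster of Brownian loops to contain a chain of loops crossing $A(r,R)$. So it suffices to bound the probability that the loop soup in $\H$ (restricted to clusters staying inside $\Omega$, but monotonicity in the domain lets us just work in $\H$, or even in $\mathbb{C}$, giving the uniformity over $\Omega$) has $n$ disjoint "cluster-crossings" of $A(r,R)$.

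I would carry this out in the following steps. \emph{Step 1: Reduction to the loop soup and to full-plane/half-plane.} Use the Brownian-loop-soup construction of $\cle_\kappa$ and the fact that adding loops only merges clusters, so that $\cro_{A(r,R)}$ for $\cle_\kappa(\Omega)$ is dominated by the number of disjoint crossing clusters of the loop soup in $\H$ (or $\mathbb C$); this removes the dependence on $\Omega$. \emph{Step 2: A single-scale, single-crossing estimate.} Show there is a constant $p_0<1$ such that, conditionally on everything outside a suitable sub-annulus, the probability that the loop soup produces one more disjoint crossing cluster of $A(r,R)$ is at most $p_0$; equivalently, bound $\mathbb P[\cro\ge 1]$ in the relevant conditional sense using the explicit Brownian loop measure of the family of loops that can contribute (this is where $\log(R/r)$ and $\kappa$ enter, via the loop-measure mass of loops crossing a fixed annulus, together with FKG/independence of disjoint loop families). \emph{Step 3: Iterate / peel off crossings.} Order the disjoint crossing clusters (say from innermost outward, or by where they first touch $\partial B_r$) and use the spatial Markov/independence property of the Poisson loop soup restricted to disjoint regions to get $\mathbb P[\cro\ge n]\le p_0^{\,n'}$ for some $n'$ comparable to $n$ — but in fact to get the full range of $s>0$ one needs the probability of an extra crossing to \emph{decrease} as the number of already-present crossings grows.

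\emph{Step 4 (the crux): super-exponential, not just exponential, decay.} Plain independence would only give exponential decay with a fixed base $p_0$. To beat every $s>0$, I would argue that $n$ disjoint crossings of $A(r,R)$ confine the $n$ crossing clusters to $n$ essentially disjoint "tubes," so each successive crossing must be realized by loops living in a thinner region, whose Brownian-loop-measure mass — and hence the conditional probability of forming another crossing cluster there — tends to $0$ as $n\to\infty$ (the loop measure of loops crossing a quad of modulus $m$ decays like $e^{-\text{const}\cdot m}$, and $n$ disjoint crossings force total modulus growing linearly, so the $k$-th crossing costs roughly $e^{-ck}$). Multiplying these costs gives $\prod_{k=1}^n e^{-ck}= e^{-cn^2/2}$-type decay, which is $O(s^n)$ for every $s>0$.

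\emph{The main obstacle} I anticipate is making the "thinner tubes" heuristic rigorous: the $n$ crossing clusters are not literally confined to deterministic thin tubes, and Brownian loops are not monotone objects one can simply restrict, so one must set up a careful conditioning (e.g.\ exploring the clusters one at a time, revealing at each stage a simply connected region of shrinking conformal modulus in which the next crossing must occur) and control the loop-soup mass in the remaining region uniformly. Handling the planar topology (the clusters can wind, and $\Omega\cap A(r,R)$ can be disconnected) and ensuring all constants are uniform in $\Omega$ — which Step 1 should guarantee once the argument is run in $\H$ — are the technical points that will need the most care. The quad version in Figure \ref{pic:quad} then follows by the same argument after a conformal map sending the quad to a standard rectangle, modulo boundary effects that are controlled because $\cle_\kappa$ loops with $\kappa\le 4$ do not touch $\partial\Omega$.
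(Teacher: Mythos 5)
Your high-level intuition --- that $n$ disjoint crossings force a large total conformal modulus, so that the cost of the $n$-th crossing must shrink and the product becomes $O(s^n)$ for every $s>0$ --- is indeed the engine of the paper's proof. But as written, the proposal has two genuine gaps that together constitute most of the actual work.

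The first gap is in Step~1. Reducing to ``$n$ disjoint crossing clusters'' is too coarse: a single outermost cluster, and in fact a single Brownian loop of large diameter that winds back and forth, can by itself produce arbitrarily many disjoint crossing arcs of $A(r,R)$. The correct reduction is $\cro_{A}(\cle_\kappa)=2\com_A(\bls^\lambda)$, where $\com_A$ counts crossing \emph{connected components} of $\bigcup_C F(C)\cap A$, and this quantity is \emph{not} dominated by the number of crossing clusters. The paper deals with this by splitting $\bls^\lambda$ at a diameter threshold $a$: for the small-loop part $\bls^\lambda_{<a}$, restricting to the annulus collapses components to clusters so that $\com_{A(r,R)}(\bls_{<a})\le\clu_{A(r+a,R-a)}(\bls_{<a}(A(r,R)))$ (Lemma~\ref{lm:CompToCluster}); for the large-loop part, one needs a separate Poisson tail on the total winding number of individual loops \emph{together with} Fomin's determinant identity to get super-exponential decay of the probability that $n$ of those windings are pairwise disjoint (Proposition~\ref{prop:loop_crossing}). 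Your proposal addresses neither the component-vs-cluster distinction nor the contribution of individual large loops, and the latter cannot be absorbed into the cluster argument because the $n$ crossings of one loop are produced by a single, highly correlated object.

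The second and more serious gap is Step~4, which you rightly flag as the crux but leave as a heuristic. Revealing a crossing cluster in a Poisson loop soup means revealing all of its loops at all scales, after which the remaining domain is a complicated random set --- not another annulus or tube to which the same estimate re-applies --- so ``peeling off crossings one at a time'' does not set up the independent product you want. Also, the $e^{-c\,m}$ decay of the Brownian loop \emph{measure} of loops crossing a modulus-$m$ tube is not the relevant input: what is needed is the probability that a \emph{chain} of small loops crosses, and that requires the BK-type bound of Sheffield--Werner rather than a loop-measure computation. (Your heuristic also contradicts itself slightly: you assert ``total modulus growing linearly'' yet conclude $e^{-cn^2/2}$, which requires total modulus $\gtrsim n^2$, as the parallel law for extremal length in fact gives.) The paper's way around all this is structurally quite different from a tube-by-tube peeling: it establishes a recursive inequality $f(n+1)\le \tfrac{s}{2}f(n)+cq^n f((1-\epsilon)n)+O(s^{2n})$ for $f(n)=\sup_{U\subseteq\H}\mathbb P[\clu_{A(r,R)}(\bls^\lambda_{<a}(U))\ge n]$, where the dichotomy is global --- either the outermost crossing component sits in a thin tube near $\mathbb R_-$ (probability $\le s/2$, by the absence of macroscopic boundary-hugging clusters), or \emph{all} $n$ crossings lie in a sector $A^{(\eta)}(r,R)$ with $\eta<\pi$, which one conformally rescales to the full half-plane, thereby widening the annulus and extracting the decisive $q^n$ factor via a single BK estimate. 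That recursion, not a direct product over tubes, is what makes the super-exponential decay come out; your proposal would need something comparable to close Step~4.
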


\begin{remark}\label{rmk:thm_main} 
\begin{itemize}
\item 
By the BK's inequality \cite{berg}, it is not hard to see that $\mathbb{P}[\cro_{A(r,R)}(\cle_\kappa(\Omega))\ge n]$ decays at least exponentially fast, see e.g. \cite[Lemma 9.6]{sheffield-werner}.

\item  The domain Markov property of CLEs requires conditioning on entire loops, from which we can only obtain a super-exponential decay of probabilities on the cluster number (of a Brownian loop soup) defined in Section \ref{sec:def_number}, see Proposition \ref{prop:clusterMajor}. Nevertheless, Theorem \ref{thm:main} can be deduced from Proposition \ref{prop:clusterMajor} by our estimates of the component number, see Lemma \ref{lm:CompToCluster} and Proposition \ref{prop:loop_crossing}.

\item The arm exponents for SLE discussed in \cite{wu-zhan} cannot be applied in our circumstance since the asymptotic regime in \cite{wu-zhan} is different, sending
$\frac{R}{r}\rightarrow\infty$ rather than $n\rightarrow\infty$. Using certain martingales for SLEs and the conformal domain Markov property, the methods in \cite{wu-zhan} involves distortion when conformally mapping the slit domain to the half-plane during each iteration, which gives rise to a super-exponential growing multiplicative factor for the crossing estimates of a fixed quad as the number of crossings goes to infinity.
\item 
We conjecture that the analogue of Theorem \ref{thm:main} for nested CLEs is valid as well.
However, our argument fails in that case because nested CLEs cannot be constructed from a single Brownian loop-soup; besides, the estimates in Theorem \ref{thm:main} are not enough to deduce that the total crossing number resulted from the branching structure of nested CLEs has super-exponential decay (the expectation of the crossing number for a simple non-nested CLE may be simply larger than one, resulting in a supercritical branching process).
\end{itemize}
\end{remark}

Though the result of Theorem \ref{thm:main} does not yet have applications to the convergence of loop representations of statistical physics models other than double-dimers to $\cle_4$ (see Section \ref{sec:1.5}), it could be used in the same vein if a relevant topological observables framework is developed for $\kappa\le 4$. It would be also interesting to study similar crossing estimates in the case $\kappa>4$, which probably should rely upon the branching $\text{SLE}_\kappa$ techniques instead of the Brownian loop-soup. See also \cite{hongler} for a study on similar crossing events of the critical site percolation on the triangular lattice, whose scaling limit is known to be the nested $\cle_6$.


Moreover, one can extend the result for the crossing event of $A(r,R)$ to more general quads on any proper domain $\Omega\subset\mathbb C$. We define a \emph{crossing-quad} of $\Omega$, denoted by $Q=(V;S_k,k=0,1,2,3)$, to be a simply connected subset $V$ inside $\Omega$, whose boundary consists of four arcs $S_{k},\ k=0,1,2,3$ listed in the counterclockwise order, such that $S_1,S_3\subset\partial \Omega$ (see e.g. Figure \ref{pic:quad}).
A natural conformally invariant measurement of the width of a quad $Q$ is  the \emph{conformal modulus} $m(Q)$, defined as the unique number for which $Q$ can be conformally mapped onto a rectangle $[0,1]\times[0,m(Q)]$, such that $S_k$ are mapped to the four sides of the rectangle with $S_0$ mapped to $[0,1]\times\{0\}$. We refer interested readers to \cite{ahlfors} for more details about properties of these concepts.

\begin{figure}[ht]
\centering
\includegraphics[width=0.3\textwidth]{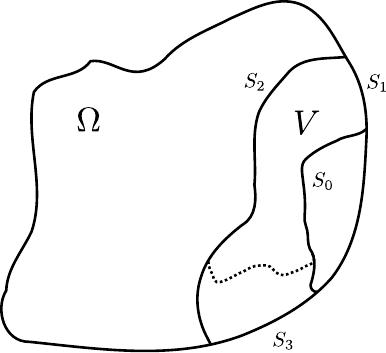}
\caption{
A crossing-quad $(V;S_0,S_1,S_2,S_3)$ in $\Omega$ and an (dotted) arc crossing it.}
\label{pic:quad}
\end{figure}

We can deduce from Theorem \ref{thm:main} that
\begin{corollary}\label{cor:main2}
Let $Q=(V;S_k,k=0,1,2,3)$ be a crossing-quad in a proper subdomain $\Omega$ of $\mathbb C$, and denote by $\cro_Q(\cle_\kappa(\Omega))$ the number of (disjoint) arcs in $\cle_\kappa(\Omega)$ joining $S_0$ and $S_2$ inside $V$. Then for any $s>0$ and $m_0>0$, 
\[
\sup_{\Omega\subseteq\H}\mathbb{P}[\cro_Q(\cle_\kappa(\Omega))\ge n]= O(s^n)
\]
uniformly over the quad $Q$ such that $m(Q)>m_0$, where the constant in $O(s^n)$ depends on $\kappa$, $m_0$ and $s$.
\end{corollary}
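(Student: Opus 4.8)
The plan is to reduce Corollary \ref{cor:main2} to Theorem \ref{thm:main} by a conformal mapping argument combined with a covering of the quad by a bounded number of annular crossing events, using the conformal invariance of $\cle_\kappa$ and the uniformity of the bound in Theorem \ref{thm:main} over all simply connected $\Omega \subseteq \H$. First I would observe that the statement depends only on the conformal modulus $m(Q)$ in a monotone way: if $m(Q) > m_0$, then a crossing of the rectangle $[0,1] \times [0, m(Q)]$ in the ``long'' direction contains a crossing of the subrectangle $[0,1]\times[0,m_0]$, so it suffices to prove the bound for quads with $m(Q)$ exactly equal to a fixed $m_0$ (and then, by monotonicity in $n$ and adjusting constants, for all $m(Q) \ge m_0$). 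Hence, after applying the conformal map $\varphi \colon V \to [0,1]\times[0,m_0]$ sending $S_0, S_1, S_2, S_3$ to the four sides, and using conformal invariance of $\cle_\kappa$ (noting $\varphi(\Omega \cap$ something$)$ is still a simply connected subdomain of $\H$ up to a further Möbius map of the half-plane — one must be slightly careful here since $V \subsetneq \Omega$, so one should instead map $\Omega$ itself and track the image of $Q$), the problem becomes: bound the number of $\cle_\kappa$-arcs crossing a fixed rectangle $R_0 := [0,1]\times[0,m_0]$ from its bottom side to its top side, uniformly over simply connected $\Omega' \subseteq \H$ containing $R_0$ appropriately.

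Next I would cover the ``long'' direction of the rectangle $R_0$ by finitely many (say $N = N(m_0)$) overlapping concentric annuli: pick points along the vertical segment $\{1/2\} \times [0, m_0]$ and at each chosen center $z_j$ place an annulus $A(z_j; r, R)$ with a fixed ratio $R/r$ depending only on $m_0$, arranged so that any arc crossing $R_0$ from bottom to top must fully cross at least one of these annuli (an arc joining the bottom and top sides of the rectangle has Euclidean diameter at least $m_0$, so it must traverse the full width of one of the annuli in the chain). This is the standard ``there is a scale at which the crossing looks macroscopic'' argument. Then each $\cle_\kappa$-arc crossing $Q$ gives rise to an arc crossing $A(z_j; r, R)$ for some $j$, so
\[
\cro_Q(\cle_\kappa(\Omega)) \le \sum_{j=1}^{N} \cro_{A(z_j; r, R)}(\cle_\kappa(\Omega)).
\]
Therefore $\{\cro_Q \ge n\} \subseteq \bigcup_{j=1}^N \{\cro_{A(z_j;r,R)} \ge n/N\}$, and a union bound together with Theorem \ref{thm:main} applied at each center (translating each annulus to be centered at the origin, which keeps $\Omega$ a simply connected subdomain of $\H$ only after a translation — but translations preserve $\H$ only horizontally, so one should instead use the version of Theorem \ref{thm:main} for annuli not centered at $0$; since the theorem's proof and statement are really about the conformal modulus of the annular region intersected with $\Omega$, this causes no difficulty, or alternatively one invokes the quad formulation bootstrapped from the $A(r,R)$ case) yields
\[
\sup_{\Omega} \mathbb{P}[\cro_Q(\cle_\kappa(\Omega)) \ge n] \le N \cdot \sup_{\Omega} \mathbb{P}[\cro_{A(r,R)}(\cle_\kappa(\Omega)) \ge n/N] = N \cdot O\big((s^{1/N})^{n}\big) = O(s^n),
\]
where in the last step one first fixes the target $s$, then chooses the $s'$ in Theorem \ref{thm:main} to be $s^{1/N}$ — legitimate because $N$ depends only on $m_0$, not on $n$.

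The main obstacle, and the only genuinely delicate point, is the bookkeeping around conformal maps and the uniformity clause: Theorem \ref{thm:main} is stated for $\Omega \subseteq \H$ and annuli centered at the origin, whereas after mapping a general quad to a rectangle the relevant domain and annuli sit in arbitrary positions. I would handle this by noting that $\cro$ is conformally invariant and that the class of regions $\Omega' \cap A$ arising this way all have conformal modulus bounded below by a constant depending only on $m_0$ and $R/r$; thus one can always post-compose with a Möbius transformation of $\H$ to bring each annulus to standard position while keeping the ambient domain a simply connected subset of $\H$, at the cost of distorting the ratio $R/r$ only within a $m_0$-dependent range — which is exactly the kind of dependence the constant in $O(s^n)$ is allowed to absorb. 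Once this is set up carefully, the rest is the routine union bound and the elementary geometric fact that a long crossing of a rectangle must cross one annulus in a chain at a definite scale.
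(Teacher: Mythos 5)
Your high-level strategy — reduce to a bounded number of annulus crossings, then union-bound and invoke Theorem \ref{thm:main} — is indeed what the paper does, but the key step of actually \emph{producing} those annuli is not correct as written, and the paper's argument for this step is genuinely different.

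The concrete problem is your covering claim. You assert that after mapping $Q$ to the rectangle $[0,1]\times[0,m_0]$, any crossing arc must cross one of a chain of concentric annuli placed along the vertical segment $\{1/2\}\times[0,m_0]$, justifying this only by the observation that the arc has Euclidean diameter at least $m_0$. That is a non sequitur: having large diameter does not force the arc to come near any particular center. To guarantee that every crossing arc enters the inner disk $B_r(z_j)$ for some $j$ you need $r\geq 1/2$ (so the inner disk covers the horizontal cross-section at level $\Im z_j$), and to guarantee it also reaches $\partial B_R(z_j)$ you need the endpoints of the arc to lie outside $B_R(z_j)$, which forces $R$ to be small relative to $m_0$; these two requirements are incompatible once $m_0$ is small (roughly $m_0\lesssim 2$). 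Since Corollary \ref{cor:main2} must hold for all $m_0>0$, this is a genuine gap, not a technicality. There is a second, related inconsistency that you notice but do not resolve: you cannot simultaneously map $V$ to a rectangle and keep the ambient domain a half-plane, and the $\cle_\kappa$ loops live in the ambient domain, not in $V$. Mapping $\Omega$ to $\H$ leaves $\varphi(Q)$ a general quad, not the rectangle your covering construction assumes.

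The paper's route around both issues is Lemma \ref{lem:extre_len}: if a quad has modulus $\geq 36$, there is an annulus $A_{z_0}(r,2r)$ that every crossing must traverse. The proof is topological, not metric — it uses that every curve from $S_0$ to $S_2$ must \emph{intersect} every curve from $S_1$ to $S_3$; one then takes a near-shortest crossing $\gamma^*$ of the dual quad and centers the annulus at its midpoint, with inner radius on the order of the length of $\gamma^*$ and a lower bound on the diameter of primal crossings coming from the modulus assumption. This topological intersection is precisely the ingredient missing from your diameter-only argument. To handle arbitrary $m_0>0$, the paper then subdivides \emph{both} $S_0$ and $S_2$ into $K$ arcs and considers the $K^2$ sub-families $\Gamma_{i,j}$; each has extremal length $>36$ for $K$ large enough (depending only on $m_0$), so Lemma \ref{lem:extre_len} applies to each, yielding $K^2$ annuli instead of a one-dimensional chain. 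Finally, the paper proves Lemma \ref{lm:cro_annuli} to extend Theorem \ref{thm:main} to annuli centered at arbitrary $z_0\in\mathbb{C}$ for $\cle_\kappa(\H)$ (via a shifting-and-covering argument), which is the missing piece you try to paper over with a vague appeal to Möbius maps of $\H$; Möbius maps do not carry annuli to annuli, so that step needs its own argument, and your ``post-compose with a Möbius transformation to bring the annulus to standard position'' does not actually produce a standard annulus.
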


\begin{remark}
The proof of Corollary \ref{cor:main2} (see Section \ref{sec:general_ann}) also applies to estimating the crossing number of an annulus contained inside the domain, see e.g. Lemma \ref{lm:cro_annuli}.
\end{remark}

\subsection{Convergence of probabilities of cylindrical events for double-dimers}\label{sec:1.5}

Besides studying regularity properties of $\cle_\kappa(\Omega)$, this paper also serves as a complement to the papers \cite{basok-chelkak} and \cite{dubedat} regarding the convergence of double-dimer loop ensembles to $\cle_4$. Developing the ideas of Kenyon \cite{kenyon}, Dubédat proved the convergence of the so-called \emph{topological observables} of double-dimer loop ensembles in Temperleyan domains to an appropriately defined Jimbo-Miwa-Ueno isomonodromic tau function, see \cite{dubedat}. Later on, based on an analysis of expansions of entire functions (defined on the moduli space of $\text{SL}_2(\mathbb{C})$-representations of the fundamental group of a punctured domain) with respect to the Fock-Goncharov \emph{lamination basis}, Basok and Chelkak \cite{basok-chelkak} proved the convergence of probabilities of cylindrical events for the double-dimer loop ensemble to the coefficients of the (infinite series) expansion of the isomonodromic tau-function via the lamination basis. On the other hand, it was shown by Dubédat \cite[Theorem 1]{dubedat} that this tau-function can be obtained by taking the expectation of the product of the traces of loop monodromies over $\cle_4$ provided that the monodromy is close enough to the identity. By definition, this provides another expansion of the tau-function via the lamination basis, where the coefficients are equal to probabilities of cylindrical events evaluated for $\cle_4$. It follows from \cite[Theorem 1.4]{basok-chelkak} that the equality of two infinite series expansions via the lamination basis implies the equality of their coefficients, provided that the coefficients of both expansions decay super-exponentially. Therefore, if one knows that the probabilities of cylindrical events evaluated for $\cle_4$ decay super-exponentially, then the results of \cite{basok-chelkak} and \cite{dubedat} imply the convergence of probabilities for the double-dimer loop ensembles to those of $\cle_4$, see \cite[Corollary 1.7]{basok-chelkak} and Corollary \ref{cor:complexity} below.

Given a Temperlean simply connected approximation $\Omega^\delta\subseteq \delta\Z^2$ of $\Omega$, the double-dimer loop ensemble on $\Omega^\delta$ is obtained by superimposing two independent uniform dimer configurations on $\Omega^\delta$. This produces a number of
loops and double-edges, with the latter withdrawn. Obtained in this manner, we denote by $\Theta^\delta_\Omega$ the random collection of nested simple pairwise disjoint loops  on $\Omega^\delta$.

Given a collection of pairwise distinct punctures 
{in a simply connected domain}, 
$\lambda_1,\ldots,\lambda_N\in\Omega$, a \emph{macroscopic lamination} on $\Omega\setminus\{\lambda_1,\ldots,\lambda_N\}$ is a {finite} collection of disjoint simple loops surrounding at least two punctures considered up to homotopies. We fix once and forever a triangulation of $\Omega\setminus\{\lambda_1,\ldots,\lambda_N\}$ with vertices at $\lambda_1,\ldots,\lambda_N$, $\partial\Omega$ (see \cite{basok-chelkak} for more details) and define the \emph{complexity} $|\Gamma|$ of a lamination $\Gamma$ to be the number of intersections of loops in $\Gamma$ with the edges of the triangulation (computed after resolving all unnecessary intersections). Note that the complexity $\Gamma$ cannot be estimated via the number of loops in $\Gamma$ if $N\ge 3$: one can construct a lamination consisting of one loop but having arbitrary large complexity, see Figure \ref{pic:3}.

\begin{figure}[ht]
\centering
\includegraphics[width=0.5\textwidth]{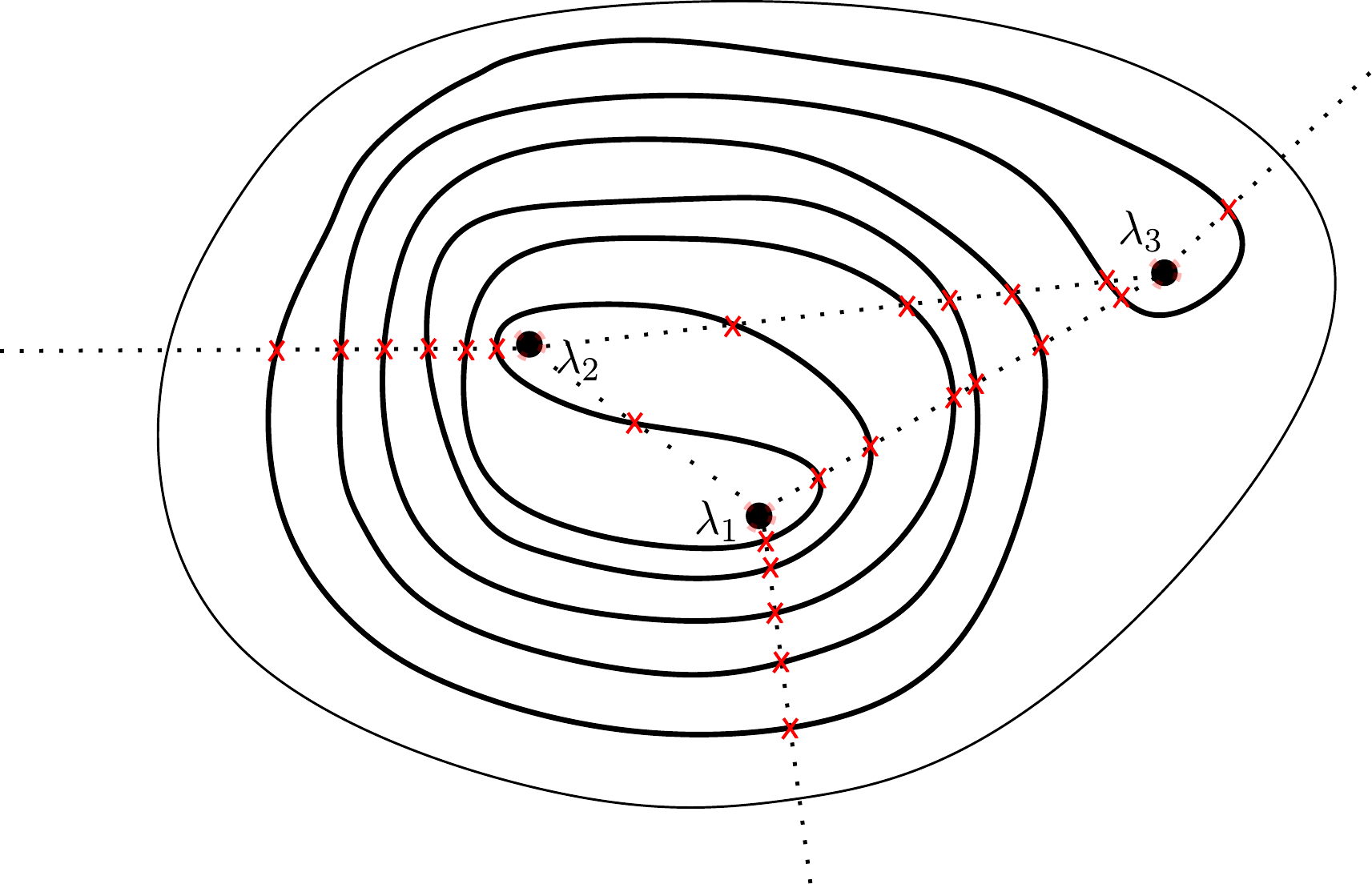}
\caption{
One loop (the bold one) with complexity $24$ (the minimal number of crosses of a loop within this homotopy class with the edges of the triangulation).}
\label{pic:3}
\end{figure}

\begin{corollary}[Convergence of probabilities of cylindrical events of double-dimer configuration] \label{cor:complexity}
Let $\Theta_\Omega$ be a nested $\cle_\kappa$ in a simply connected domain $\Omega$, $\kappa\in(\frac{8}{3},4]$. 
Let $\Gamma$ be a macroscopic lamination, and denote by $\Theta_\Omega \sim \Gamma$ the event that  $\Theta_\Omega$ is homotopic to $\Gamma$ after one removes all loops surrounding at most one puncture. Then for any $s>0$,
	\[
\mathbb{P}_{\cle^\text{nested}_\kappa}[\Theta_\Omega\sim\Gamma]=O(s^{-|\Gamma|}) \text{  as }|\Gamma| \rightarrow\infty.
\]
Furthermore, for all macroscopic laminations $\Gamma$, 
\begin{equation}\label{eq:cv_dbl-dimer}
\mathbb{P}_{\text{double-dimer}}[\Theta^\delta_\Omega\sim\Gamma]\to\mathbb{P}_{\cle^\text{nested}_4}[\Theta_\Omega\sim\Gamma] \text{  as } \delta\to 0.
\end{equation}
\end{corollary}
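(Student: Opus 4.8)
The plan is to prove the quantitative bound $\mathbb P_{\cle^{\mathrm{nested}}_\kappa}[\Theta_\Omega\sim\Gamma]=O(s^{-|\Gamma|})$ first, and then deduce the convergence \eqref{eq:cv_dbl-dimer} from it together with \cite{basok-chelkak} and \cite{dubedat}. Conformal invariance of $\cle_\kappa$ lets us reduce, by a Riemann map, to subdomains of $\mathbb H$ whenever Theorem \ref{thm:main} or Corollary \ref{cor:main2} is invoked.

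The first step is a topological reduction to crossing counts. Since the complexity $|\Gamma|$ is a homotopy invariant, the standard comparability of geometric intersection numbers for different filling arc systems on a punctured disk shows that $|\Gamma|$ is comparable, up to a multiplicative constant depending only on $\Omega$, the punctures and the fixed triangulation, to $\sum_{k=1}^{N} i(\Gamma,\delta_k)$, where $\delta_1,\dots,\delta_N$ are fixed disjoint simple arcs, $\delta_k$ joining $\lambda_k$ to $\partial\Omega$, whose complement in $\Omega$ (after removing the punctures) is simply connected. Hence on the event $\{\Theta_\Omega\sim\Gamma\}$ the loops of $\Theta_\Omega$ that surround at least two punctures cross $\bigcup_k\delta_k$ at least $c\,|\Gamma|$ times in total. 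Covering the portion of each $\delta_k$ that stays at a definite distance from all punctures by finitely many annuli compactly contained in $\Omega$, and the portion near $\partial\Omega$ by finitely many crossing-quads with two opposite sides on $\partial\Omega$, all of conformal modulus bounded below, and using that a loop surrounding two of the fixed punctures cannot store its windings at a scale below the minimal inter-puncture distance, one obtains a fixed finite family $\mathcal F$ of quads and annuli with
\[
\{\Theta_\Omega\sim\Gamma\}\ \subseteq\ \Bigl\{\ \textstyle\sum_{F\in\mathcal F}\cro^{\ge 2}_F(\Theta_\Omega)\ \ge\ c'\,|\Gamma|\ \Bigr\},
\]
where $\cro^{\ge 2}_F(\Theta_\Omega)$ denotes the number of disjoint crossing arcs of $F$ produced by loops of $\Theta_\Omega$ that surround at least two punctures. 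It then suffices to bound $\mathbb P[\cro^{\ge 2}_F(\Theta_\Omega)\ge n]$ by $O(s^n)$ for each of the finitely many $F\in\mathcal F$ and every $s>0$.

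The second step, which I expect to be the main obstacle, transfers the non-nested estimate to the nested ensemble. The loops of $\Theta_\Omega$ surrounding at least two punctures form a finite forest under nesting; its roots are precisely the outermost loops of the non-nested $\cle_\kappa(\Omega)$ that enclose $\ge 2$ punctures (such a loop cannot be contained in a loop enclosing $\le 1$), and inside any loop $\ell$ of this forest the configuration in $\mathrm{int}(\ell)$ is an independent nested $\cle_\kappa$. Applying Corollary \ref{cor:main2} to the non-nested CLE living inside each loop of the forest controls, with a super-exponential rate, the crossings of $\mathcal F$ produced at that single generation. The delicate point is to sum these bounds along the forest — a crude branching argument is genuinely insufficient, as noted in Remark \ref{rmk:thm_main} — and here the finiteness and fixedness of the punctures is what makes it work: every loop of the forest has at most $\lfloor N/2\rfloor$ children enclosing $\ge 2$ punctures (they are disjoint and jointly enclose all $N$ punctures), and every nested chain in the forest encloses a common pair of punctures, so the macroscopic depth of the forest is dominated by the number of loops of a nested CLE separating two fixed points. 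The latter is governed by the random walk performed by minus the logarithm of the conformal radius of the successive enclosing loops, whose i.i.d.\ strictly positive increments make "staying macroscopic for $n$ generations'' a large-deviation event of super-exponentially small probability; this is exactly what lets the per-generation super-exponential bounds be summed without loss, yielding $\mathbb P[\cro^{\ge 2}_F(\Theta_\Omega)\ge n]=O(s^n)$ and hence the claimed estimate on $\mathbb P_{\cle^{\mathrm{nested}}_\kappa}[\Theta_\Omega\sim\Gamma]$. Making this uniform in $\Omega$, and choosing $\mathcal F$ so that even deeply nested macroscopic loops are forced to cross its members at the required rate, is where I anticipate most of the work.

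Finally, with the super-exponential decay of $\mathbb P_{\cle^{\mathrm{nested}}_4}[\Theta_\Omega\sim\Gamma]$ established, the convergence \eqref{eq:cv_dbl-dimer} follows from the results recalled in Section \ref{sec:1.5}: by \cite[Theorem 1]{dubedat} the isomonodromic tau-function expands in the Fock–Goncharov lamination basis with coefficients equal to these $\cle_4$ probabilities; by \cite{basok-chelkak} the double-dimer cylindrical probabilities converge to the tau-function and expand in the same basis with super-exponentially decaying coefficients; and by \cite[Theorem 1.4]{basok-chelkak} two lamination-basis expansions defining the same function have equal coefficients, provided both decay super-exponentially. This is \cite[Corollary 1.7]{basok-chelkak}.
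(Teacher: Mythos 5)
Your overall strategy—reduce the complexity to crossing counts, control the nesting depth, and combine—is the same broad plan as the paper's, but the two executions diverge in ways that matter, and your write-up leaves the central combination step unsubstantiated. The paper's Lemma \ref{lm:complexity} reduces the complexity to crossing counts of a \emph{fixed} finite family of concentric annuli $A_i=A(|\lambda_i|,|\lambda_{i+1}|)$ (so only Theorem \ref{thm:main} is needed, not Corollary \ref{cor:main2}), and it makes the observation you omit: all loops of the nested ensemble that surround the \emph{same} subset $\Lambda$ of punctures are homotopic and therefore have identical complexity. This equality linearizes the problem: if $j$ loops around $\Lambda$ each have complexity $n_0$, the total contribution is exactly $jn_0$, and the proof can decouple the "depth" factor $j$ from the "per-loop" factor $n_0$ by splitting $e^{Cjn_0}$ as $(e^{2Cn_0})^{j/2}$, bounding the first half of the chain with Dub\'edat's Lemma 21 ($e^{-c(j/2)^{3/2}}$) and the second half with the exponential moment of the non-nested complexity from Theorem \ref{thm:main}. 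Your plan instead works directly with crossings of annuli/quads (a geometric quantity, not a homotopy invariant), so nested loops around the same $\Lambda$ need \emph{not} contribute equally, and your claim that the depth control "lets the per-generation super-exponential bounds be summed without loss" is an assertion that does not follow automatically. One needs an exponential-moment computation (not a naive split such as $D\ge\sqrt n$ vs.\ some $X_i\ge\sqrt n$, which only yields $e^{-cn^{3/4}}$), and that computation is exactly what the paper's equal-complexity trick delivers cleanly.

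Two further concrete gaps. First, your argument for the super-exponential tail of the depth via the random walk of log conformal radii as stated is incorrect: i.i.d.\ strictly positive increments with positive mean give only \emph{exponential} decay of $\mathbb P[S_j\le c]$ by Cram\'er unless one additionally controls the behavior of the increment density near $0$ (boundedness of the density gives $\mathbb P[S_j\le c]\le (\|f\|_\infty c)^j/j!$, which is super-exponential, but you neither state nor justify such control). The paper avoids this by citing Dub\'edat's Lemma 21, which gives the $\exp(-cj^{3/2})$ tail directly. Second, applying Corollary \ref{cor:main2} "to the non-nested CLE living inside each loop $\ell$" is delicate, since the quads in your $\mathcal F$ have two sides on $\partial\Omega$ and these do not become admissible crossing-quads of $\mathrm{int}(\ell)$; the paper's use of fixed annuli $A_i$ with Theorem \ref{thm:main} (whose supremum is over \emph{all} open $U\subset\H$) sidesteps this cleanly. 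The final deduction of \eqref{eq:cv_dbl-dimer} from the decay estimate via \cite[Corollary 1.7]{basok-chelkak} and \cite[Theorem 1]{dubedat} is the same as the paper's and is correct.
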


It is worth mentioning that the estimate provided in Corollary \ref{cor:complexity} is weaker than the super-exponential decay of crossing numbers of nested CLEs. However, it is sufficient for the analysis performed in \cite{basok-chelkak}. 

The rest of the paper is organized as follows: 
Section \ref{sec:defs} discusses several quantities related to the crossing number, presents the Brownian loop-soup construction of CLEs, and gives a proof outline for our main results. Section \ref{sec:comp_clus_number} is around some preliminary deterministic results and the technical proof of Proposition \ref{prop:clusterMajor}. The readers not interested in these details may skip Section \ref{sec:comp_clus_number}. In the end, the proofs of Theorem \ref{thm:main}, Corollary \ref{cor:main2} and Corollary \ref{cor:complexity} are given in Section \ref{sec:main_proof}, Section \ref{sec:general_ann} and Section \ref{sec:complexity} respectively.

\section{Notations and Preliminaries}\label{sec:defs}
In this section, we fix and discuss some notations for loop ensembles and introduce the Brownian loop-soup construction of the simple CLE.

\subsection{Clusters, crossing and component number}\label{sec:def_number}
Given a simply connected domain $\Omega$, a \emph{loop ensemble} $\ls$ in $\Omega$ is a countable collection of loops (not necessarily simple or pairwise disjoint) in $\Omega$. Two loops $l$ and $l'$ are in the same cluster if and only if one can find a finite chain of loops $l_0,\ldots,l_n$ in $\ls$ such that $l_0=l,\ l_n=l'$ and $l_j\cap l_{j-1}\neq\emptyset$ for all $j=1,\ldots,n$. Given a cluster $C$, we denote by $\overline C$ the closure of the union of all loops in $C$. Denote by $F(C)$ the filling of $C$, which is the complement of the unbounded connected component of $\mathbb{C}\setminus\overline{C}$. ({Note that F(C)} is simply connected). A cluster $C$ is called outermost is there exists no cluster $C'$ such that $C\subset F(C')$. Denote by $F(\ls)$ the family $\setof{F(C)}{C\text{ is a outermost cluster of }\ls}$.  

A loop ensemble $\ls$ can be divided into two parts by restriction to a subdomain $\Omega'\subset\Omega$,
\[
\ls(\Omega'):=\setof{l\in\ls}{l\subset\Omega'},\,\ls(\Omega')^\bot:=\ls\backslash\ls(\Omega'),
\]
One can also divide $\ls$ by considering the loop diameter:
\[
\ls_{<a}:=\setof{l\in\ls}{\diam(l)<a},\,\ls_{\ge a}:=\setof{l\in\ls}{\diam(l)\ge a},
\]
where $\diam(l):=\sup_{x,y\in l}\dist(x,y)$.

For all $0<r<R$ and point $z_0\in\mathbb C$, denote by $A_{z_0}(r,R)$ the annulus of inner and outer radii $r$ and $R$
centered at $z_0$,

\begin{equation}\label{eq:annulus}
A_{z_0}(r,R) = \{z \in \mathbb C, r < |z-z_0| < R\},
\end{equation}

and denote by $B_r(z_0)$ the disk of radius $r$ centered at $z_0$,
\begin{equation}\label{eq:C}
B_r(z_0)=\{z\in\mathbb C, |z-z_0|\le r\}.
\end{equation}

For the sake of simplicity, we will write $A(r,R)$ and $B_r$ if $z_0$ is the origin $0$ of the complex plane.

We say that a connected set crosses $A=A(r,R)$, if it intersects both boundaries of $\partial A(r,R)=\partial B_R\sqcup\partial B_r$.
For a loop ensemble $\ls$, the \emph{crossing number} $\cro_A(\ls)$ is the maximum number of disjoint arcs of loops in $\ls$ that cross $A$, see Figure \ref{pic:cross}. From the definition, one can easily observe that the crossing number is monotone and subadditive, that is
\begin{equation}\label{eq:property_cross}
\cro_A(\mathcal L_1)\le\cro_A(\mathcal L_1\cup\mathcal L_2)\le\cro_A(\mathcal L_1)+\cro_A(\mathcal L_2).
\end{equation}

\begin{figure}[ht]
\centering
\includegraphics[width=0.5\textwidth]{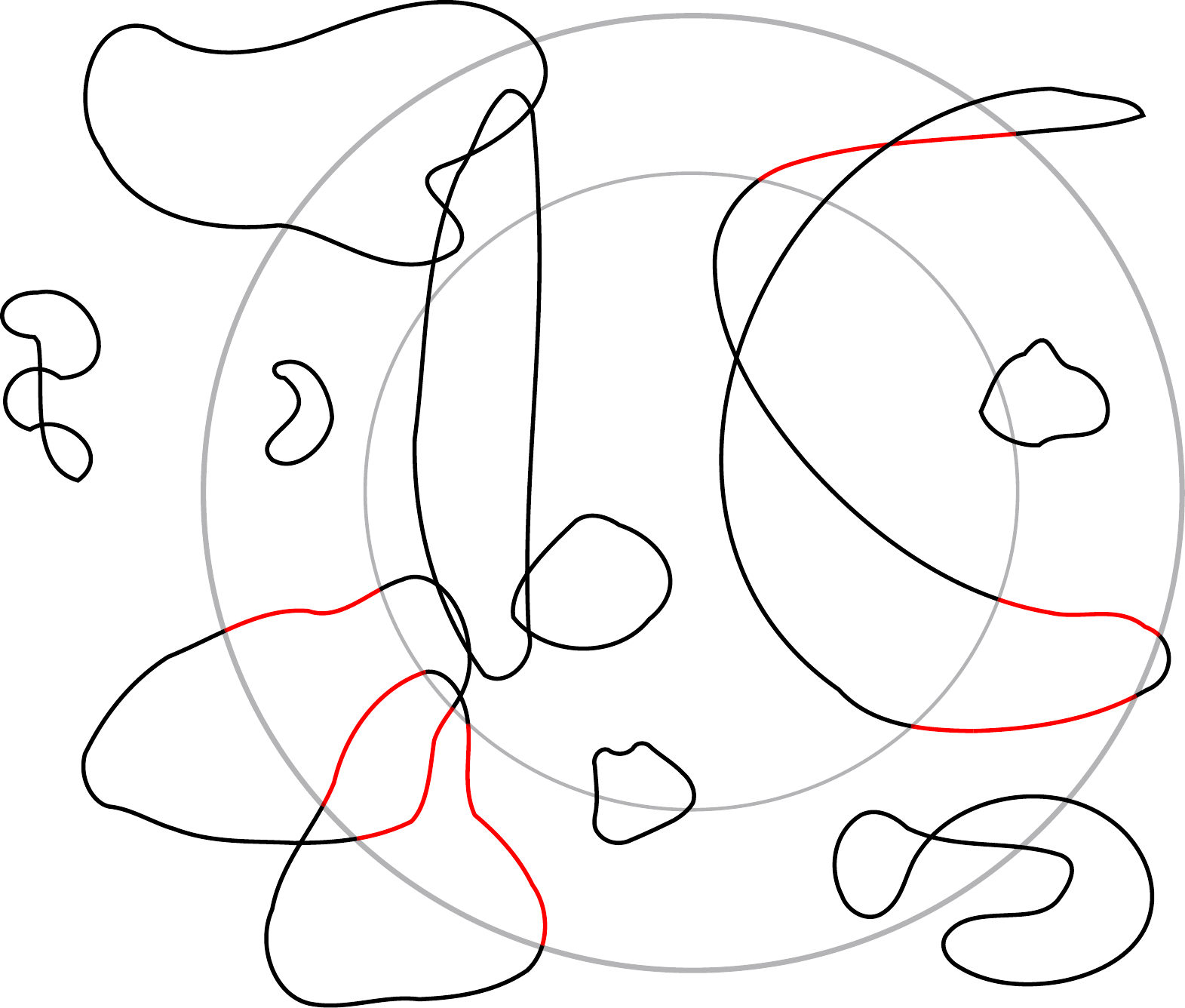}
\caption{
In this configuration, we have $\cro_A(\mathcal L) = 7$ with the 7 paths marked in red. Notice that one can choose only one among the two paths in the top-right part because they intersect each other. Since loops in CLE do not self-intersect, this will never happen for a CLE.}
\label{pic:cross}
\end{figure}

The \emph{component number $\com_A(\ls)$} is defined as the number of path-connected components of $\cup_{C\in\{\text{outermost clusters of }\ls\}}F(C)\cap A$ that cross $A$.
In particular, if $\ls$ is a non-nested simple loop ensemble with disjoint loops, for instance the non-nested $\cle_\kappa, \frac{8}{3}<\kappa\le4$, then
\begin{equation}\label{eq:cro-comp}
\cro_A(\ls)=2\com_A(\ls).
\end{equation}
\begin{figure}[ht]
\centering
\includegraphics[width=0.5\textwidth]{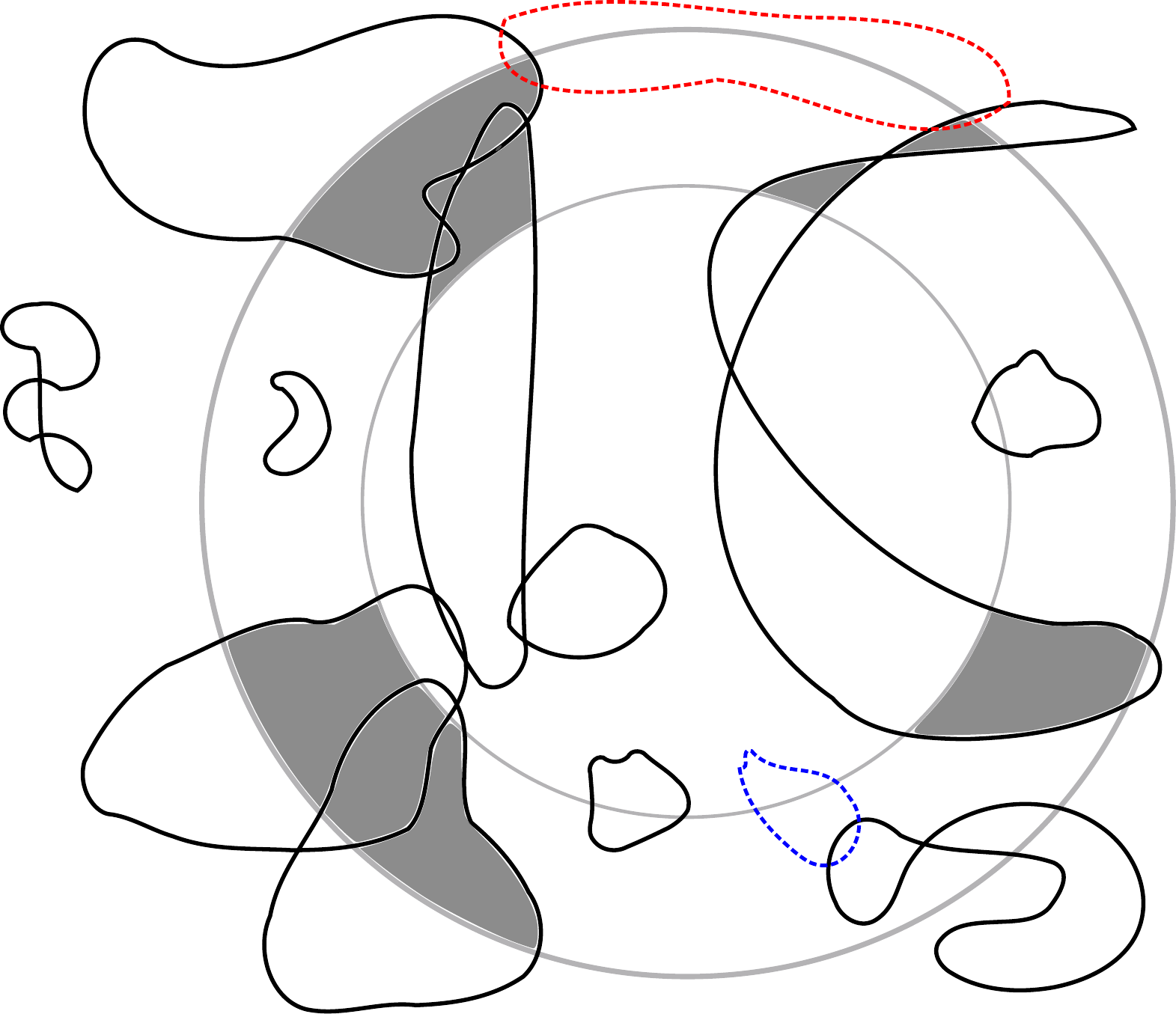}
\caption{
We have 4 crossing components marked in gray, therefore $\com_A(\mathcal L) = 4$.
Adding the red dotted loop would connect two existing crossing connected components, so $\com_A(\mathcal L\cup\{l_{\text{red}}\})=3<\com_A(\mathcal L)$. Adding the blue dotted loop would create a new crossing connected component, so $\com_A(\mathcal L\cup\{l_{\text{blue}}\})=5>\com_A(\mathcal L)+\com_A(\{l_{\text{blue}}\}).$
}
\label{pic:comp}
\end{figure}
In general, we no longer have monotonicity and subadditivity as in \eqref{eq:property_cross} for the component number: adding a new loop may connect two crossing components, resulting in $\com_A(\mathcal L_1\cup\mathcal L_2)<\com_A(\mathcal L_1)$; they may also create new components by collaboration, causing $\com_A(\mathcal L_1\cup\mathcal L_2)>\com_A(\mathcal L_1)+\com_A(\mathcal L_2)$, see Figure \ref{pic:comp}.

The \emph{cluster number} $\clu_A(\ls)$ is defined as the number of outermost clusters of $\ls$ which cross $A$, see e.g. Figure \ref{pic:clus}. 
It is immediate that for any loop ensemble $\ls$,
\[
\com_A(\ls) \ge \clu_A(\ls),
\]
and that the cluster number does not have monotonicity and subadditivity with respect to loop ensembles neither. 

Finally, if we fix an arbitrary loop ensemble $\mathcal L$, then all three quantities have monotonicity with respect to annuli, i.e. for $A'\subset A$, 
\[
(\cro_{A'}(\mathcal L),\com_{A'}(\mathcal L),\clu_{A'}(\mathcal L))\ge(\cro_{A}(\mathcal L),\com_{A}(\mathcal L),\clu_{A}(\mathcal L)).
\]

\begin{figure}[ht]
\centering
\includegraphics[width=0.5\textwidth]{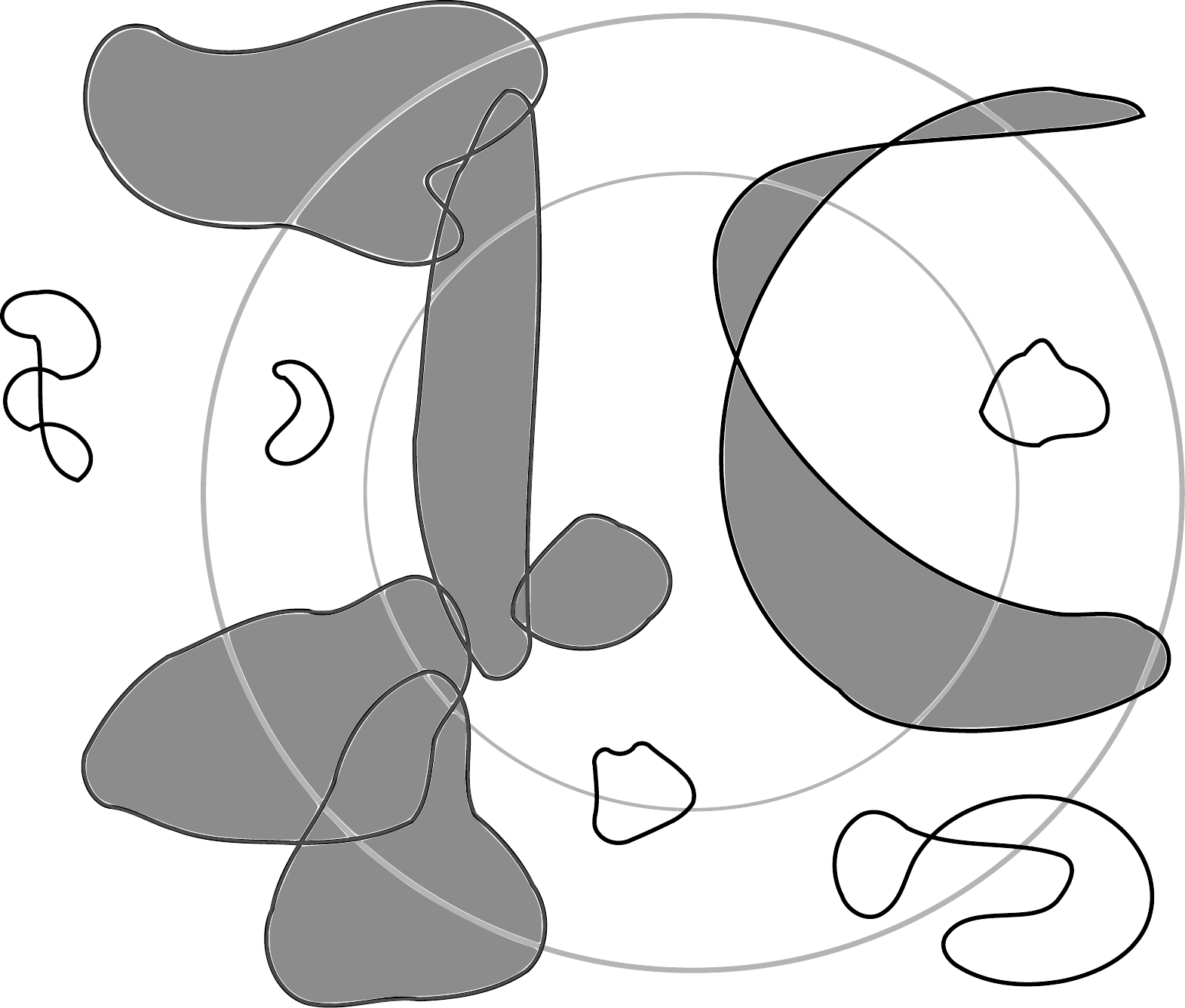}
\caption{
There are 2 crossing clusters in gray, and $\clu_A(\mathcal L) = 2$.}
\label{pic:clus}
\end{figure}

\subsection{The Brownian loop measure}\label{sec:Brownian loop measure}
Consider a simply connected domain $\Omega\subseteq \mathbb C$. 
The \emph{Brownian loop measure} in $\Omega$ was introduced by Lawler and Werner in \cite{lawler-werner}, and employed to construct $\cle$ in \cite{sheffield-werner}. Let $\mu^t_{x,\Omega}$ be the sub-probability measure on the set of paths in $\Omega$ started from $x\in\Omega$, defined from the probability distribution of a Brownian motion started at $x$ on the time interval $[0,t]$, which is killed upon hitting $\partial\Omega$. 
From this we obtain by disintegration the measures $\mu^{t}_{x\rightarrow y,\Omega}$ on paths from $x$ to $y$ inside $\Omega$,
\[
\mu^t_{x,\Omega}=\int_{\Omega}\mu^t_{x\rightarrow y,\Omega}d^2y,
\]
where $d^2y$ denotes the Lebesgue measure. Then the \emph{Brownian loop measure} on $\Omega$ is defined by the following integration: (here we choose the same normalization as in \cite{sheffield-werner}, which is one half of the Brownian loop measure defined in \cite{lawler-werner} considering the orientation)
\[
\mu^{\text{loop}}_{\Omega}=\int_0^{\infty}\frac{dt}{ 2 t}\int_{\Omega}\mu^t_{x\rightarrow x,\Omega}d^2y.
\]

Notice that it induces a measure on the traces of
unrooted loops by forgetting the root $x$ and the time-parametrization. Considering the fact that Brownian motion is invariant under conformal isomorphism up to a time change, the Brownian loop measure is also conformally invariant because of the time weight which appears in $\mu^{\text{loop}}_\Omega$. It is not hard to see from the definition that the Brownian loop measure satisfies the restriction property: if $\Omega'\subset\Omega$, then $\lm_{\Omega'}$ is the restriction of $\lm_\Omega$ to the set of loops in $\Omega'$.

Under the Brownian loop measure, the total mass of loops 
in the whole complex plane $\mathbb{C}$ is infinite (for all positive $R$, both the mass of loops of diameter greater than $R$ and the mass of loops of diameter smaller than $R$ are infinite), which can be viewed as a consequence of the conformal (scaling) invariance. However, for all $r<R$, the mass of the set of loops which stay in $\H$ intersecting both $r\mathbb{D}$ and $\mathbb{C}\setminus R\mathbb{D}$ is finite, where $\mathbb{D}$ is the unit disk, see the proof of Lemma 13 in \cite{lawler-werner}. This is also true for the Brownian loop measure on any subdomain of $\mathbb{H}$ by the restriction property (see eg. p.5 \cite{lawler-werner}).

\subsection{Loop-soup construction of CLE}\label{sec:loop-soup} Nested conformal loop ensemble 
$\cle_\kappa(\Omega)$ for $\kappa\in(8/3,4]$ defined on a simply connected domain $\Omega$ is a random collection of disjoint \emph{simple} loops in $\Omega$ characterized by the following properties:
\begin{itemize}
\item (Conformal invariance) If $\varphi:\Omega\rightarrow\Omega'$ is a conformal map from $\Omega$ onto $\Omega'$, then $\varphi(\cle_\kappa(\Omega))$ has the same distribution as $\cle_\kappa(\Omega')$.
\item (Restriction) If $U$ is a simply connected subset of $\Omega$ and $\Tilde{U}$ is obtained by removing from $\Omega$ all the $\cle_\kappa(\Omega)$ loops and their interior that do not entirely stay in $U$, then in each connected component $U'$ of the interior of $\Tilde{U}$, the conditional law of the set of loops that lie entirely in $U'$ is distributed as $\cle_\kappa(U')$.
\item (Locally finiteness) For each $\epsilon>0$, only finitely many loops have a diameter greater than $\epsilon$.
\item (Nesting) Conditioned on a loop $\gamma$ in $\cle_\kappa(\Omega)$ and all loops outside $\gamma$, the set of loops inside $\gamma$ is an independent $\cle_\kappa(\Omega_\gamma)$, where $\Omega_\gamma$ is the interior (finite) domain bounded by Jordan curve $\gamma$.
\end{itemize} 

A \emph{Brownian loop soup} $\bls^\lambda(\Omega)$ with intensity $\lambda$ is a Poissonian sample on the set of loops with intensity $\lambda\mu^{\text{loop}}_{\Omega}$ for $\lambda\in (0,1]$, which is characterized by the following properties:
\begin{itemize}
	\item The loop cluster is not unique and not boundary-touching, i.e. $\overline C\cap\partial\Omega=\emptyset$ almost surely.
	\item For any two disjoint measurable sets of loops $\ls_1$ and $\ls_2$, $\bls^\lambda(\Omega)\cap\ls_1$ and $\bls^\lambda(\Omega)\cap\ls_2$ are independent. In particular, if $\Omega'$ is a subdomain of $\Omega$, then $\bls^\lambda(\Omega)$ can be decomposed into two independent parts: $\bls^\lambda(\Omega')$ (the set of loops staying in $\Omega'$, which is again {a Brownian loop soup in $\Omega'$}) and $\bls^\lambda(\Omega')^\bot$ (the set of loops intersecting $\Omega\setminus\Omega'$).
	\item If $\varphi:\Omega\rightarrow\Omega'$ is a conformal isomorphism between two domains $\Omega$ and $\Omega'$, then $\varphi(\bls^\lambda(\Omega))=\{\varphi(l):l\in\bls^\lambda(\Omega)\}$ is distributed as $\bls^\lambda(\Omega')$.
    \item For any measurable set $\ls$ such that $\lambda\mu^{\text{loop}}_{\Omega}(\ls)<\infty$, the law of the number of elements in $\bls^\lambda(\Omega)\cap\ls$ satisfies the Poisson law with mean $\lambda\mu^{\text{loop}}_{\Omega}(\ls)$.
\end{itemize}

For a sample of Brownian loop soup $\bls^\lambda(\Omega)$ with intensity $\lambda$, as in Section \ref{sec:def_number}, denote by 
\[
\partial F(\bls^\lambda(\Omega))=\{\partial F(C): C \text{ is a cluster and there exists no cluster }C' \text{ such that }C\subseteq F(C')\}
\]
the set of boundaries of fillings (the complement of the unbounded
connected component of $\mathbb{C}\setminus C$) of all outermost clusters $C$ of $\bls^\lambda(\Omega)$. Then it is showed in \cite[Section 1.3]{sheffield-werner} that
$\partial F(\bls^\lambda(\Omega))$ has the same distribution as the non-nested $\cle_\kappa(\Omega)$ with $\lambda=(3\kappa-8)(6-\kappa)/2\kappa$. In particular, we have that for $\kappa\in(\frac{8}{3},4]$,
\begin{equation}
\label{eq:CLEtoBLS}
\cro_A(\cle_\kappa(\Omega))=2\com_A(\cle_\kappa(\Omega))\overset{d}{=}2\com_A(\bls^\lambda(\Omega))
\end{equation}
for any annulus $A$ and simply connected domain $\Omega$.

\subsection{Outline of the proof}

Here we present the intuition behind the proof of Theorem \ref{thm:main}. To begin with, by \eqref{eq:cro-comp} and \eqref{eq:CLEtoBLS}, it suffices to study $\com_{A(r,R)}(\bls^\lambda(\Omega))$. 
Then we divide $\ls=\bls^\lambda(\Omega)$ into $\ls_1=\bls^\lambda_{< a}(\Omega)$ (loops with diameter less than $a$) and $\ls_2=\bls^\lambda_{\ge a}(\Omega)$ (loops with diameter larger or equal to $a$), and reduce the problem to $\com_{A(r,R)}(\ls_1)$ and $\com_{A(r,R)}(\ls_2)$ by Lemma \ref{lm:decomposition}.

Intuitively, loops with small diameter cannot appear in many different crossing connected components of $A(r,R)$, which inspired us to bound $\com(\ls_1)$ by $\clu(\ls_1)$ in Lemma \ref{lm:CompToCluster}.
The main technicality in this paper consists of dealing with $\clu(\ls_1)$, which will be discussed in Section \ref{sec:3.3} and Section \ref{sec:iter} by establishing a recursive inequality using the conformal invariance of the Brownian loop soup.

Moreover, the probability distribution on the number of loops in $\ls_2$ has super-exponential tail since it is a Poisson distribution. Combined with Fomin's identity for non-intersection probabilities for the Brownian paths, we obtain the probabilistic super-exponential decay of $\cro(\ls_2)$ in Proposition \ref{prop:loop_crossing}.

In conclusion, the desired upper bound for $\com(\bls^\lambda(\Omega))=\com(\ls_1\cup\ls_2)$ follows from  estimates of $\clu(\ls_1)$ and $\cro(\ls_2)$. 
We remark that the annuli subscripts in the above notions of crossing/component/cluster numbers are deliberately omitted, because we need to change the annuli slightly in each step.

Finally, in Section \ref{sec:general_ann}, we prove Corollary \ref{cor:main2} based on the estimates established in Theorem \ref{thm:main}. In the last Section \ref{sec:complexity}, we carefully apply Corollary \ref{cor:main2} to the setup of the complexity for the convergence of probabilities of cylindrical events for double-dimer configurations.

\section{Component Number and Cluster Number}\label{sec:comp_clus_number}
The goal of this section is to explore some deterministic properties and relations of $\com_A(\ls)$ and $\clu_A(\ls)$ and present the proof of the super-exponential decay of ${\sup_{U\subseteq\H}}\mathbb P\left[\clu_{A(r,R)}(\bls^\lambda_{<a}({U}))\ge n\right]$, $\lambda\le 1, a>0$, see Proposition \ref{prop:clusterMajor}. Here we assume the annuli to be centered at $0$ without loss of generality but keep in mind that those relations are translationally invariant. Besides, we only consider loop ensembles with the following properties: for any fixed $r>0$,
\begin{equation}\label{eq:ls}
\begin{aligned}
&\text{All loops in $\mathcal L$ do not touch (i.e., do not intersect without crossing) $\partial\Omega$, $\partial B_r$ or any other loop in $\mathcal L$;}\\
&\text{Outermost boundaries of clusters of $\mathcal L$ do not touch $\partial\Omega$, $\partial B_r$ or any other loop in $\mathcal L$.}
\end{aligned}
\end{equation}

It is known that $\mathcal L=\bls^{\lambda}$ satisfies \eqref{eq:ls} almost surely \cite{sheffield-werner}. The assumptions \eqref{eq:ls} also holds for $\bls^{\lambda}_{<a},
\bls^{\lambda}_{\ge a}$, since there is a positive probability that $\bls^{\lambda}=\bls^{\lambda}_{<a}$, and $\bls^{\lambda}_{<a}$ is independent of $\bls^{\lambda}_{\ge a}$. 

\subsection{Component number}\label{sec:comp}

Recall that the component number $\com_A(\ls)$ is the number of connected components of $\cup_{C\in\{\text{outermost clusters of }\ls\}} F(C)\cap A$ connecting $\partial B(r)$ and $\partial B(R)$. We first show that for any crossing connected component of $F(C)\cap A$, there is a finite collection of loop arcs whose union crosses $A$ inside $D$.
\begin{lemma}\label{lm:LoopPath}
Let $\ls$ be a loop ensemble satisfying \eqref{eq:ls}. For each annulus $A(r,R)$ and crossing connected component $D$, there exists a path $\gamma\subset D$ comprised of finitely many arcs of loops in $\ls$, such that $\gamma$ crosses $A(r,R)$. This sequence of loops will be denoted by $L_\gamma$.
\end{lemma}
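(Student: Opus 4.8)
The plan is to strip the statement to its topological core in three reductions: to a single outermost cluster, then to that cluster rather than its filling, and finally to a finite chain of loop arcs.

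\emph{Reduction 1 (one cluster suffices).} I would first argue that the fillings of distinct outermost clusters are pairwise disjoint: for any cluster $C$ one has $\partial F(C)\subseteq\overline C$, since $\partial F(C)$ is the boundary of the unbounded component of $\mathbb C\setminus\overline C$; for $\ls$ as in \eqref{eq:ls} distinct clusters do not touch, so the boundaries $\partial F(C)$ are pairwise disjoint, and outermost-ness rules out one filling containing another. By a standard planar separation argument, two filled closed sets with disjoint boundaries, neither contained in the other, are disjoint. Hence $\{F(C):C\text{ outermost}\}$ is a disjoint family of closed sets, so the path-connected component $D$ is contained in a single $F(C_0)$ and is itself a crossing component of $F(C_0)\cap A(r,R)$.

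\emph{Reduction 2 (from $F(C_0)$ to $\overline{C_0}$).} Since $D$ is path-connected and crosses $A(r,R)$, fix a continuous $\sigma\colon[0,1]\to D$ with $\sigma(0)\in\partial B_r$ and $\sigma(1)\in\partial B_R$. As $\overline{C_0}$ is a continuum, each bounded component $U$ of $\mathbb C\setminus\overline{C_0}$ is simply connected with $\partial U\subseteq\overline{C_0}$; rerouting every maximal excursion of $\sigma$ into such a $U$ along $\partial U$ produces a continuous crossing curve whose image lies in $\overline{C_0}\cap D$, these modifications not leaving $D$.

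\emph{Reduction 3, and the main obstacle.} It remains to replace $\sigma$ by a path of finitely many loop arcs lying in $D$. The loop graph of $C_0$ (loops as vertices, nonempty intersections as edges) is connected because $C_0$ is a cluster, so any two of its loops are joined by a \emph{finite} intersection chain, and $\overline{C_0}$ is the closure of the union of its loops; this should let one extract finitely many loops $l_{i_1},\dots,l_{i_m}\in C_0$ whose arcs, suitably spliced, form the desired $\gamma$ (with $L_\gamma=\{l_{i_1},\dots,l_{i_m}\}$). I expect this to be the real obstacle: although every loop of $C_0$ lies in $F(C_0)$, an individual arc may exit $\overline{A(r,R)}$ or fall into a component of $F(C_0)\cap A(r,R)$ other than $D$, so the extraction must be organised around a crossing component of $\overline{C_0}\cap\overline{A(r,R)}$ meeting $D$ so that every arc used stays in $D$, while keeping the chain finite — here one also invokes local finiteness of $\ls$ (for $\ls=\bls^\lambda$, only finitely many loops of $C_0$ exceed any fixed diameter) to bound the number of arcs. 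Reductions 1 and 2 are routine planar topology.
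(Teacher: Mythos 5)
Your proposal identifies the right overall strategy (fix the cluster, use a finite intersection chain between two loops), and you correctly flag the crux: the chain connecting $l$ and $l'$ may wander out of $A(r,R)$ or into a component of $F(C_0)\cap A(r,R)$ other than $D$, so it is not automatic that its restriction to $D$ crosses. But you stop short of the key idea that the paper uses to close that gap, and the two tools you reach for instead do not do the job.

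The paper's resolution is the simple connectivity of $F(C)$. Having picked loops $l,l'\in C$ meeting $\partial B_r\cap D$ and $\partial B_R\cap D$ (which exist because, by \eqref{eq:ls}, neither loops nor the cluster boundary can merely touch $\partial A$, so $D$ genuinely crosses via loops of $C$), and a finite chain $l=l_0,\ldots,l_n=l'$, one argues by contradiction: if $\bigcup_i l_i\cap D$ failed to cross $A(r,R)$, the chain would have to detour out of $A$ and re-enter in a different component $D'$ of $F(C)\cap A$; then $D$ together with the fillings of the chain's loops would enclose a region of $A$ lying between $D$ and $D'$. Since that enclosed region would still be inside $F(C)$ (the filling of any connected subset of the simply connected $F(C)$ stays in $F(C)$), it would link $D$ and $D'$ inside $F(C)\cap A$, contradicting that these are distinct components. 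This is the step your sketch is missing: ``organising the extraction around a crossing component of $\overline{C_0}\cap\overline A$'' does not by itself guarantee that any two loops in that component are joined by a finite chain \emph{inside} that component, so it reproduces rather than resolves the obstacle.

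Two smaller points. First, invoking local finiteness of $\ls$ to bound the number of arcs is a red herring: the chain is finite by the very definition of a cluster, and the number of arcs needed from each $l_i$ for a single crossing is finite for trivial reasons; local finiteness plays no role here and is not in fact guaranteed by \eqref{eq:ls}. Second, your Reduction 2 (rerouting a crossing path $\sigma\subset D$ to lie on $\overline{C_0}$) is not needed in the paper's argument, which instead starts from loops $l,l'$ directly; moreover your Reduction 1 claim ``filled closed sets with disjoint boundaries, neither contained in the other, are disjoint'' is not quite right as stated, since $\partial F(C_2)$ could sit in the interior of $F(C_1)$ even when $\partial F(C_1)\cap\partial F(C_2)=\emptyset$ — though the disjointness of fillings of outermost clusters is true and standard, you would want to argue it via ``neither filling contains a point of the other cluster's boundary'' rather than a generic separation principle.
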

\begin{proof}
Denote by $C$ the cluster such that $D\subset F(C)$. By \eqref{eq:ls}, clusters and loops cannot touch $\partial A(r,R)$. Thus there exist loops $l,l'\in C$ such that $l,l'$ intersect $\partial B(r)\cap D$ and $\partial B(R)\cap D$ respectively.

Since $l,l'$ are in the same cluster $C$,  there exists a finite chain of loops $l_0=l,l_1,l_2,\ldots,l_n=l'$ in $\ls$ such that $l_i$ and $l_{i+1}$ are adjacent. We conclude that $\cup_{i=1}^nl_i \cap D$ {crosses $A(r,R)$} since $F(C)$ is simply connected (otherwise the union of $D$ with all fillings of chains of loops connecting $l$ and $l'$ encircles a hole). Thus we can draw a crossing path $\gamma$ out of a crossing chain of finite loops.
\end{proof}

Using Lemma \ref{lm:LoopPath} for the decomposition of the loop ensemble, the component number can be bounded above by the component number of a smaller annulus as follows. 

\begin{lemma}\label{lm:decomposition}
Let $\ls_1,\ls_2$ be two disjoint loop ensembles satisfying \eqref{eq:ls}. Take $0<r< r'< R'<R$, then
\begin{equation}\label{eq:decomp}
\begin{aligned}
\com_{A(r,R)}(\ls_1\cup\ls_2) \le &\com_{A(r',R')}(\ls_{1})+\cro_{A(r,r')}(\ls_2)+\cro_{A(R',R)}(\ls_2)\\
    &+\#\{l\in\mathcal L_2:l\cap A(r',R')\ne\emptyset ,\,l\subset A(r,R)\}.
\end{aligned}
\end{equation}
In particular, if $\ls$ is inside $\H$, $\ls_1=\ls(\Omega)$ and $\ls_2=\ls(\Omega)^\bot$ for some domain $A(r,R)\cap \H \subset \Omega$, then
\[
\com_{A(r,R)}(\ls)\le \com_{A(r',R')}(\ls(\Omega))+\cro_{A(r,r')}(\ls(\Omega)^\bot)+\cro_{A(R',R)}(\ls(\Omega)^\bot).
\]
\end{lemma}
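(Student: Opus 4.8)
The plan is to start from a maximal family of disjoint crossing connected components of $\bigcup_{C}F(C)\cap A(r,R)$ for the union $\ls_1\cup\ls_2$, and to classify each such component $D$ according to how much of its ``crossing mass'' is supplied by $\ls_2$-loops. Concretely, by Lemma \ref{lm:LoopPath}, inside each crossing component $D$ there is a path $\gamma_D\subset D$ made of finitely many arcs of loops in $\ls_1\cup\ls_2$ that crosses $A(r,R)$; write $L_{\gamma_D}$ for the finite chain of loops realizing it. The first step is a dichotomy on $D$: either (i) the sub-path of $\gamma_D$ lying in the smaller annulus $A(r',R')$ already crosses $A(r',R')$ using only $\ls_1$-loops, or (ii) it does not, in which case some $\ls_2$-loop is essential for crossing the middle annulus.

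In case (i), the components $D$ of this type are disjoint, and each contains a crossing path of $A(r',R')$ built from $\ls_1$-loops; since $\ls_1$ satisfies \eqref{eq:ls}, each such crossing path lies in a single outermost cluster of $\ls_1$, hence in a single crossing component of $\bigcup_{C}F(C)\cap A(r',R')$ for $\ls_1$, and distinct $D$'s land in distinct such components (they are separated because $F(C)$'s are simply connected and the $D$'s are disjoint crossing components). So the number of type-(i) components is at most $\com_{A(r',R')}(\ls_1)$. This is the term that produces the first summand on the right-hand side. The second displayed inequality (the half-plane case) is then the special case $\ls_2 = \ls(\Omega)^\bot$, for which any loop counted by the last term would have to intersect $\H\setminus\Omega$ while staying in $A(r,R)\subset\Omega$, an impossibility, so that term vanishes.

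In case (ii), I want to charge $D$ to one of the three remaining quantities. Since $\gamma_D$ crosses $A(r,R)$ but its restriction to $A(r',R')$ does not cross $A(r',R')$ using only $\ls_1$-loops, following $\gamma_D$ from $\partial B_r$ to $\partial B_R$ one must use an $\ls_2$-loop that is ``responsible'' for bridging part of the way through $A(r',R')$: either such a loop $l\in\ls_2$ itself meets $A(r',R')$ while being contained in $A(r,R)$ (charge to the last term, $\#\{l\in\ls_2: l\cap A(r',R')\ne\emptyset,\ l\subset A(r,R)\}$), or the $\ls_2$-loops used by $\gamma_D$ near the middle all reach outside $A(r,R)$, in which case the portion of $\gamma_D$ inside the inner collar $A(r,r')$ or the outer collar $A(R',R)$ contains an $\ls_2$-arc crossing that collar (charge to $\cro_{A(r,r')}(\ls_2)$ or $\cro_{A(R',R)}(\ls_2)$). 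The disjointness of the components $D$ guarantees the crossing arcs we extract in the collars are disjoint, so these charges respect the definition of $\cro$; and a single loop in the last term is charged by at most one $D$ because the $D$'s are disjoint. Summing the four cases gives \eqref{eq:decomp}.

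The main obstacle I expect is making the case-(ii) charging argument genuinely injective and topologically rigorous: one has to be careful that the crossing arcs selected in the collars $A(r,r')$ and $A(R',R)$ are honestly disjoint across different components $D$ (this uses that the $D$'s, being distinct connected components of the same union, are disjoint, together with assumption \eqref{eq:ls} ruling out boundary tangencies at $\partial B_r, \partial B_{r'}, \partial B_{R'}, \partial B_R$), and that no $\ls_2$-loop in the last term gets double-counted. A secondary subtlety is that ``$\gamma_D$ restricted to $A(r',R')$ crosses using only $\ls_1$-loops'' must be interpreted correctly: it is not that $\gamma_D\cap A(r',R')$ is literally a single arc, but that the $\ls_1$-part of the chain $L_{\gamma_D}$, intersected with the closed middle annulus, already connects $\partial B_{r'}$ to $\partial B_{R'}$ within one cluster; I would phrase the dichotomy directly in terms of the chain $L_{\gamma_D}$ rather than the path to avoid pathologies. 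Everything else is bookkeeping with the monotonicity and subadditivity of $\cro$ from \eqref{eq:property_cross}.
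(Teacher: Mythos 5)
Your plan matches the paper's: apply Lemma~\ref{lm:LoopPath} to each crossing component $D$, split on whether the resulting chain is pure $\ls_1$ or involves an $\ls_2$-loop, and (for $\ls_2$-loops) split further on whether that loop is contained in $A(r,R)$. However, one step in your case~(ii) is false as stated. You claim that when the responsible $\ls_2$-loops reach outside $A(r,R)$, ``the portion of $\gamma_D$ inside the inner collar $A(r,r')$ or the outer collar $A(R',R)$ contains an $\ls_2$-arc crossing that collar.'' This need not hold: the arcs of $\gamma_D$ lying in the collars could all be supplied by $\ls_1$-loops, while the $\ls_2$-loop $l$ contributes only a short arc to $\gamma_D$ somewhere inside $A(r',R')$. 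What is true is that $l$ \emph{itself} crosses a collar: $l$ meets $A(r',R')$ (at its intersection with the path) and reaches outside $A(r,R)$, so by continuity of $|l(\cdot)|$ it crosses $A(r,r')$ or $A(R',R)$, and it is that crossing sub-arc of $l$ — not an arc of $\gamma_D$ — that should be charged to $\cro_{A(r,r')}(\ls_2)$ or $\cro_{A(R',R)}(\ls_2)$.

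The paper avoids the slip by applying Lemma~\ref{lm:LoopPath} to the \emph{smaller} annulus $A(r',R')$, producing a crossing path $\gamma$ contained in $D\cap A(r',R')$. Then any $\ls_2$-loop $l$ with $l\cap\gamma\neq\emptyset$ automatically meets $A(r',R')$, and the dichotomy $l\subset A(r,R)$ versus $l\not\subset A(r,R)$ immediately produces the last term or a collar crossing of $l$, without ever examining $\gamma$'s own arcs in the collars. I would rewrite your case~(ii) with that choice of annulus for the lemma. The rest of your bookkeeping — type-(i) components mapping injectively to $\ls_1$-crossing components, each $\ls_2$-loop in the last term lying in a unique $D$ because it is connected and contained in $F(C)\cap A(r,R)$, and disjointness of the collar crossings across distinct $D$'s — coincides with the injectivity the paper asserts in its closing sentence, and your reading of the half-plane special case (the last term vanishes because $l\subset A(r,R)\cap\H\subset\Omega$ contradicts $l\in\ls(\Omega)^\bot$) is correct.
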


\begin{proof}
For each component $D$ that contributes to $\com_{A(r,R)}(\ls_1\cup\ls_2)$ which also crosses $A(r',R')$, it follows from Lemma \ref{lm:LoopPath} that there is a path $\gamma$ crossing $A(r',R')$ within $D\cap A(r',R')$ constituted by finitely many arcs of loops in $L_\gamma$, contained in $D$. 

If $L_\gamma$ is a subset of $\ls_{1}$, then it stays in a connected component which contributes to $\com_{A(r',R')}(\ls_{1})$. Otherwise, there exists $l\in{\ls_{2}}$ such that $l\cap \gamma\neq\emptyset$. In such cases, if $l\subset A(r,R)$, then it contributes to the term $\#\{l\in\mathcal L_2:l\cap A(r',R')\ne\emptyset ,\,l\subset A(r,R)\}$. If $l\not\subset A(r,R)$, then $l$ intersects $\partial B_r$ or $\partial B_R$, which contributes to $\cro_{A(R,R')}(\ls_2)$ or $\cro_{A(r,r')}(\ls_2)$ since $\gamma\subset A(r',R')$ and $l\cap \gamma\neq\emptyset$. The desired upper bound \eqref{eq:decomp} is thus proved since for distinctive crossing components $D_1,\ldots,D_n$ contributing to the left-hand side of \eqref{eq:decomp}, one can find different crossing components or loops contributing to the right-hand side of \eqref{eq:decomp}
contained in $D_1,\ldots,D_n$, respectively.
\end{proof}

\subsection{Cluster number}\label{sec:clus}
For any loop ensemble whose loops have diameter less than $a$, the component number in $A(r,R)$ can be bounded by the cluster number with respect to an annulus which is $a$-smaller than $A(r,R)$.

\begin{lemma}\label{lm:CompToCluster}
For $0<r<r+a<R-a<R$, let $\ls$ be a loop ensemble such that $\ls_{<a}$ satisfies \eqref{eq:ls}.
we have 
\[
\com_{A(r,R)}(\ls_{<a})\le \clu_{A(r+a,R-a)}(\ls_{<a}(A(r,R))).
\]
\end{lemma}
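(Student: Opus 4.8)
The goal is to show $\com_{A(r,R)}(\ls_{<a})\le \clu_{A(r+a,R-a)}(\ls_{<a}(A(r,R)))$, i.e., every crossing component of $A(r,R)$ gives rise to a distinct outermost cluster of the restricted ensemble $\ls_{<a}(A(r,R))$ that crosses the shrunken annulus $A(r+a,R-a)$. The key observation driving the proof is the diameter bound: since every loop has diameter $<a$, a crossing path (as produced by Lemma~\ref{lm:LoopPath}) built entirely out of arcs of such loops must, while traversing $A(r,R)$, pass through a long "tube" of loops each of small diameter, and in particular all loops in $L_\gamma$ that meet $A(r+a,R-a)$ are automatically contained in $A(r,R)$ — so they survive the restriction to $A(r,R)$.

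First I would fix a crossing component $D$ of $\cup_C F(C)\cap A(r,R)$, say $D\subset F(C)$ for an outermost cluster $C$ of $\ls_{<a}$. By Lemma~\ref{lm:LoopPath} applied to the annulus $A(r+a,R-a)$ — which I can do since $D$ still crosses this smaller annulus (monotonicity of the component number in annuli, as noted in Section~\ref{sec:def_number}, or directly since $D$ connects $\partial B_r$ to $\partial B_R$) — there is a path $\gamma\subset D$ crossing $A(r+a,R-a)$ made of finitely many arcs of loops $L_\gamma\subset\ls_{<a}$. Now comes the main point: any loop $l\in L_\gamma$ meets $\gamma\subset \overline{A(r+a,R-a)}$, hence $l$ meets $\overline{B_{R-a}}\setminus B_{r+a}$; since $\diam(l)<a$, we get $l\subset B_R\setminus \overline{B_r}=A(r,R)$ (a point of $l$ at distance $\ge r+a$ from the origin forces $l\subset\{|z|>r\}$, and similarly a point at distance $\le R-a$ forces $l\subset\{|z|<R\}$). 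More generally I would argue that the entire chain of loops linking the endpoints within $C$ can be taken inside $A(r,R)$: following the proof of Lemma~\ref{lm:LoopPath}, the relevant loops form a connected chain meeting $\gamma$, but to be safe one only needs the loops of $L_\gamma$ themselves, which already form a connected union crossing $A(r+a,R-a)$ and lie in $A(r,R)$. Thus $L_\gamma\subset\ls_{<a}(A(r,R))$, and the loops of $L_\gamma$ all belong to a single cluster $C'$ of $\ls_{<a}(A(r,R))$ whose union crosses $A(r+a,R-a)$.

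Next I would check that $C'$ (or rather the outermost cluster of $\ls_{<a}(A(r,R))$ containing it) is genuinely an outermost crossing cluster and that distinct components $D$ yield distinct such clusters. For distinctness: if $D_1\ne D_2$ are two crossing components of $A(r,R)$, their associated loop families $L_{\gamma_1},L_{\gamma_2}$ lie in $D_1,D_2$ respectively, which are disjoint subsets of $\cup_C F(C)$; hence no loop is shared, and — because $D_1,D_2$ sit in different fillings $F(C)$ of $\ls_{<a}$ — the clusters of $\ls_{<a}(A(r,R))$ containing $L_{\gamma_1}$ and $L_{\gamma_2}$ are distinct (two loops in the same cluster of a subensemble are a fortiori in the same cluster of $\ls_{<a}$, hence in the same $F(C)$, hence in the same $D$). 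Passing to outermost clusters only identifies clusters, never splits them, so I then need that passing from $C'$ to the outermost cluster above it does not merge the contributions of $D_1$ and $D_2$; this again follows since an outermost cluster of $\ls_{<a}(A(r,R))$ lies within a single cluster of $\ls_{<a}$ and hence within a single $F(C)$, hence meets only one $D_i$. Finally, each such outermost cluster crosses $A(r+a,R-a)$ because the sub-cluster $C'$ already does and crossing is monotone under enlarging the cluster. Counting gives the inequality.

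The step I expect to be the main obstacle is the bookkeeping around \emph{outermost} clusters: the restriction to $A(r,R)$ and the passage from "a cluster containing $L_\gamma$" to "the outermost cluster above it" must be shown not to collapse two components together, and conversely the cluster one lands on must still cross $A(r+a,R-a)$ rather than some larger cluster "absorbing" it in a way that breaks the injectivity $D\mapsto C'$. The clean way to handle this is the structural fact used above — every cluster of $\ls_{<a}(A(r,R))$ is contained in a cluster of $\ls_{<a}$, so its filling lies in some $F(C)$ and meets at most one crossing component $D$ — together with the diameter-$a$ trapping argument that keeps all relevant loops inside $A(r,R)$ in the first place. Everything else (the existence of the crossing path, monotonicity of $\clu$ and $\com$ in the annulus) is quoted from Lemma~\ref{lm:LoopPath} and Section~\ref{sec:def_number}.
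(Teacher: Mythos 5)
Your argument follows the paper's route (path from Lemma~\ref{lm:LoopPath}, diameter trapping, injectivity), but the justification of injectivity contains a false implication that you should repair. You argue that if $L_{\gamma_1}$ and $L_{\gamma_2}$ were in the same cluster of $\ls_{<a}(A(r,R))$, they would be ``in the same cluster of $\ls_{<a}$, hence in the same $F(C)$, hence in the same $D$.'' The last step is wrong: $F(C)\cap A(r,R)$ may have several crossing path-connected components, so two loops lying in the same $F(C)$ need not lie in the same component $D$. For the same reason your earlier assertion that ``$D_1,D_2$ sit in different fillings $F(C)$'' is unjustified; two crossing components can perfectly well belong to the same filling. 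What actually closes the argument — and this is what the paper uses — is that the cluster $C'$ of $\ls_{<a}(A(r,R))$ containing $L_\gamma$ is connected \emph{and} consists entirely of loops contained in $A(r,R)$; hence $\overline{C'}$ is a connected subset of $F(C)\cap A(r,R)$, so it lies inside a single component, necessarily $D$. The containment $\overline{C'}\subset D$ is the whole point; reducing to ``same $F(C)$'' discards exactly the connectedness-in-$A(r,R)$ information you need.

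On the secondary issue you raise about passing to outermost clusters: you are right to flag it, but your justification ``crossing is monotone under enlarging the cluster'' does not apply here, because $C'\subset F(C'')$ is a containment in the \emph{filling} of $C''$, not a containment of loop-sets, so $\overline{C'}\subset\overline{C''}$ is not available and one cannot conclude that $\overline{C''}$ crosses $A(r+a,R-a)$ just because $\overline{C'}$ does. (The paper is terse on this point as well, simply asserting that the cluster of $L_\gamma$ ``contributes to'' the cluster count.) If you want to make that step airtight you would need to rule out the possibility of an enveloping cluster $C''$ whose closure stays in one of the boundary strips $A(r,r+a)$ or $A(R-a,R)$ while winding around; monotonicity alone does not do it.
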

\begin{proof}
By Lemma \ref{lm:LoopPath}, for each component $D$ that contributes to $\com_{A(r,R)}(\ls_{<a})$, we can find a path $\gamma$ in $D\cap A(r+a,R-a)$ from loops in $L_\gamma \subset \ls_{<a}$. Since all loops in $\mathcal L_{<a}$ have diameter less than $a$, $L_\gamma$ is contained in $A(r,R)$. Therefore, $L_\gamma$ is a subset of a cluster which contributes to $\clu_{A(r+a,R-a)}(\ls_{<a}(A(r,R)))$. Conversely, this cluster is connected and stays within $A(r,R)$, thus it is contained in $D$, which gives the injectivity of the mapping from $\com_{A(r,R)}(\ls_{<a})$ to $\clu_{A(r+a,R-a)}(\ls_{<a}(A(r,R)))$.
\end{proof}

Similarly as Lemma \ref{lm:decomposition}, we obtain the following upper bound for the cluster number.
\begin{lemma}\label{lm:cluster_decomp}
Let $0<r\le r'<R'\le R$, and $\ls_1,\ls_2$ be two disjoint loop ensembles satisfying \eqref{eq:ls}, then
\begin{equation*}
\begin{aligned}
\clu_{A(r,R)}(\ls_1\cup\ls_2) \le &\clu_{A(r',R')}(\ls_{1})+\#\{l\in\ls_2:l\cap A(r',R')\ne\emptyset ,\,l\subset A(r,R)\}\\
&+\#\{l\in\ls_2:l\text{ crosses }A(r,r')\text{ or }A(R',R)\}.
\end{aligned}
\end{equation*}
In particular, 
\[
	\clu_{A(r,R)}(\ls_1\cup\ls_2)\le \clu_{A(r,R)}(\ls_{1})+\#\{l\in\ls_2:l\cap A(r,R)\neq\emptyset\}
\]
in the degenerate case $r'=r, R'=R$.
\end{lemma}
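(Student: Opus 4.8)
## Proof Proposal for Lemma \ref{lm:cluster_decomp}

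The plan is to mimic the injective-mapping argument from Lemma \ref{lm:decomposition}, but now tracking \emph{clusters} rather than connected components of fillings, which is in fact simpler because clusters are intrinsic to the loop ensemble and do not require the topological detour through $F(C)$. Fix a sample of $\ls_1\cup\ls_2$ and consider any outermost cluster $C$ of $\ls_1\cup\ls_2$ that crosses $A(r,R)$. The strategy is to produce, for each such $C$, a distinct object contributing to one of the three terms on the right-hand side, with the correspondence being injective.

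First I would split into cases according to whether $C$ uses any loop of $\ls_2$. If $C\subseteq\ls_1$ (i.e.\ all loops of $C$ lie in $\ls_1$), then $C$ is contained in some cluster $C'$ of $\ls_1$; since $C$ crosses $A(r,R)\supseteq A(r',R')$, so does $C'$, and $C'$ contains an \emph{outermost} cluster of $\ls_1$ crossing $A(r',R')$ (note a cluster of $\ls_1$ could a priori be swallowed by the filling of another $\ls_1$-cluster, so one passes to the outermost one containing it) — this contributes to $\clu_{A(r',R')}(\ls_1)$. If instead $C$ contains at least one loop $l\in\ls_2$, pick such an $l$; I would choose it canonically, say the one of largest diameter, breaking ties by some fixed ordering, so that the assignment $C\mapsto l$ is well-defined. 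Now either $l\subset A(r,R)$ or not. If $l\subset A(r,R)$: since $C$ crosses $A(r,R)$ and $l\in C$, the crossing chain realizing the crossing of $A(r,R)$ together with $l$ shows (as $F(C)$ is connected and $l$ is attached to it) that $l$ must reach into the region bounded between $\partial B_{r'}$ and $\partial B_{R'}$, i.e.\ $l\cap A(r',R')\neq\emptyset$; hence $l$ contributes to $\#\{l\in\ls_2:l\cap A(r',R')\ne\emptyset,\ l\subset A(r,R)\}$. If $l\not\subset A(r,R)$, then $l$ meets $\partial B_r$ or $\partial B_R$; combined with the fact that $C$ (and in particular the part of $C$ reachable through $l$) crosses $A(r,R)$, a short argument shows $l$ itself must cross $A(r,r')$ or $A(R',R)$ (it exits $A(r,R)$ through one boundary circle, and being in a cluster that spans all of $A(r,R)$ it also penetrates past $\partial B_{r'}$ resp.\ $\partial B_{R'}$) — so $l$ contributes to the last term.

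The injectivity is the point that needs the mild care: distinct crossing clusters $C_1,C_2$ of $\ls_1\cup\ls_2$ are disjoint as loop collections, so the $\ls_2$-loops chosen for them are distinct, and the outermost $\ls_1$-clusters obtained for them are also distinct (they lie in $C_1$ and $C_2$ respectively and clusters of $\ls_1$ refine clusters of $\ls_1\cup\ls_2$). This gives the main inequality. For the degenerate case $r'=r$, $R'=R$, the two annular strips $A(r,r')$ and $A(R',R)$ are empty, so the last term vanishes and the second term becomes $\#\{l\in\ls_2:l\cap A(r,R)\ne\emptyset,\ l\subset A(r,R)\}\le\#\{l\in\ls_2:l\cap A(r,R)\ne\emptyset\}$, yielding the stated simplified bound.

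I expect the only genuine obstacle to be the geometric claim used in the last case — that a loop $l\in\ls_2$ lying in a crossing cluster of $A(r,R)$ but not contained in $A(r,R)$ must itself cross one of the thin annuli $A(r,r')$ or $A(R',R)$. This is where one uses that $C$ crosses \emph{all} of $A(r,R)$: the crossing chain, which is connected and joins $\partial B_r$ to $\partial B_R$, together with connectivity of $F(C)$ and the location of $l$, forces $l$ to straddle the relevant boundary circle with enough room to cross the thin annulus. The argument parallels the "otherwise the union encircles a hole" reasoning in the proof of Lemma \ref{lm:LoopPath} and in Lemma \ref{lm:decomposition}, so I would phrase it by reduction to those, keeping the exposition brief.
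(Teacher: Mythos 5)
Your proposed case split is not the one the paper uses, and as written it has a genuine gap. You distinguish cases according to whether $C$ is \emph{entirely} contained in $\ls_1$; the paper (following Lemma \ref{lm:decomposition}) instead distinguishes according to whether the $\ls_1$-loops of $C$ \emph{by themselves} produce a crossing of $A(r',R')$. These are very different: a cluster $C$ may contain an irrelevant tiny $\ls_2$-loop hanging off the side while the crossing is done entirely by $\ls_1$-loops. In your scheme such a $C$ falls into your second case, and you then select the $\ls_2$-loop of largest diameter and try to show it contributes to one of the last two terms --- but this loop need not intersect $A(r',R')$ at all (if it is contained in $A(r,R)$) nor cross $A(r,r')$ or $A(R',R)$ (if it is not). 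Concretely, take one big $\ls_1$-loop spanning the whole annulus and one small $\ls_2$-loop of diameter $0.1$ sitting near $\partial B_r$ and touching the big loop: the cluster crosses, your algorithm picks the small loop, and it contributes to none of the three terms. The sentence ``the crossing chain realizing the crossing together with $l$ shows $\ldots$ $l\cap A(r',R')\neq\emptyset$'' is simply false --- connectivity of $F(C)$ does not constrain where a given loop of $C$ sits.

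The fix is to use Lemma \ref{lm:LoopPath} directly, as the paper indicates: extract a finite chain $L_\gamma$ whose arcs form a path $\gamma\subset A(r',R')$ crossing $A(r',R')$. If $L_\gamma\subseteq\ls_1$, that chain sits inside a cluster of $\ls_1$ crossing $A(r',R')$ and one passes to the outermost such cluster. Otherwise some loop $l\in L_\gamma\cap\ls_2$ meets $\gamma\subset A(r',R')$, so it automatically satisfies $l\cap A(r',R')\neq\emptyset$; then, exactly as in the proof of Lemma \ref{lm:decomposition}, either $l\subset A(r,R)$ (second term) or $l$ exits $A(r,R)$ and hence crosses $A(r,r')$ or $A(R',R)$ (third term), since it reaches both $A(r',R')$ and the complement of $A(r,R)$. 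The loop $l$ is chosen from the crossing chain, not canonically by size; this is what makes each case land in the correct term. Your handling of injectivity and of the degenerate case $r'=r,\ R'=R$ is fine and matches the paper.
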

\begin{proof} As in the proof of Lemma \ref{lm:decomposition}, if in the beginning we take any cluster $C$ in $\clu_{A(r,R)}(\ls_1\cup\ls_2)$, we can decompose the cluster number depending on whether $\ls_1$ restricted to $C$ gives a crossing of $A(r',R')$ or not. Then the argument follows the same line as the proof of Lemma \ref{lm:decomposition}.
\end{proof}

Let us briefly mention how results in this section will be used in the probabilistic setting for Poissonnian Brownian loops to prove the quasi-multiplicativity of crossing probabilities. Recall that $\bls^\lambda(A(r,R))$ is a Brownian loop soup with intensity $\lambda\in(0,1]$ in $A(r,R)$, and for simplicity, we will write $\bls^\lambda(r,R)=\bls(A(r,R))$. Let $\rho<r<r'<\rho'<R'<R<P$ and $\epsilon,s>0$. In the next paragraph, we give an upper bound on $\clu_{A(r,R)}(\bls^\lambda(\rho,P))$. 

Firstly, we can upper-bound this cluster number of $A(r,R)$ by the cluster number of $A(r,r')$ and $A(R',R)$, which follows from Lemma \ref{lm:cluster_decomp} that 
\begin{align*}
\clu_{A(r,R)}(\bls^\lambda(\rho,P))\le & \min\{\clu_{A(r,r')}(\bls^\lambda(\rho,\rho')),\clu_{A(R',R)}(\bls^\lambda(\rho',P))\}\\
&+\#\{l\in\bls^\lambda(\rho,P):l\text{ crosses }A(r',\rho')\text{ or }A(\rho',R')\}.
\end{align*}
By the independence of $\bls^\lambda(\rho,\rho')$, $\bls^\lambda(\rho',P)$ and the Poisson tail of \[
\#\{l\in\bls^\lambda(\rho,P):l\text{ crosses }A(r',\rho')\text{ or }A(\rho',R')\},
\]
we have that
\begin{align}\label{eq:multicativity}
	\mathbb P\left[\clu_{A(r,R)}\left(\bls^\lambda(\rho,P))\right)\ge n\right] \le &\mathbb P\left[\clu_{A(r,r')}\left(\bls^\lambda(\rho,\rho')\right)\ge(1-\epsilon)n\right] \\
&\times\mathbb P\left[\clu_{A(R',R)}\left(\bls^\lambda(\rho',P)\right)\ge(1-\epsilon)n\right]+O(s^n). \nonumber
\end{align}
The inequality \eqref{eq:multicativity} is a key component for proving the recursive relation \eqref{eq:iter}, which will result in the desired super-exponential decay.

\subsection{Super-exponential decay of the cluster number}
\label{sec:3.3}

In this subsection, we prove that the probability distribution on the cluster number has a super-exponentially tail. It is intuitively not hard to see that crossing clusters occur "disjointly" in a loop ensemble, 
therefore the probability of finding two crossing clusters should be smaller than the product of their probabilities.

\begin{proposition}\label{prop:clusterMajor}
Let $0<a<r<1<R$. Denote by $\bls^\lambda_{<a}({U})$ the set of loops with diameter less than $a$ in a Brownian loop soup with intensity $\lambda\in(0,1]$ {in any open set $U\subseteq\H$}. Then for each $s>0$, we have 
\begin{equation}\label{eq:clusterMajor}
{\sup_{U\subseteq\H}}\mathbb P\left[\clu_{A(r,R)}(\bls^\lambda_{<a}({U}))\ge n\right] = O(s^n)\, \text{ as } n\rightarrow\infty,
\end{equation}
where the supremum is taken over all open subsets of $\H$, and the constant in $O(s^n)$ depends on $a,R/r,\lambda$ and $s$.
\end{proposition}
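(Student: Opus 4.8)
The plan is to follow the roadmap sketched in the outline: prove a quasi-multiplicative recursive inequality for the tail probabilities, combine it with an a priori exponential bound, and then iterate, using the scaling invariance of the Brownian loop soup to boost the exponential decay into a super-exponential one. It is convenient to work with the family
\[
G(n;\mu,\alpha):=\sup_{W\subseteq\H}\prob{\clu_{A(1,\mu)}(\bls^\lambda_{<\alpha}(W))\ge n},
\]
indexed by the annulus modulus (through $\mu$) and the diameter cutoff $\alpha$; by scaling invariance the assertion to be proved is exactly $G(n;R/r,a/r)=O(s^n)$. Two elementary facts will be used throughout. First, although $\clu$ is neither monotone nor subadditive in the loop ensemble, the \emph{event} ``$\bls^\lambda_{<\alpha}(W)$ contains a cluster crossing $A(1,\mu)$'' is increasing in the loop ensemble, so its probability is maximized at $W=\H$ (and is increasing in $\alpha$). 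Second, the Brownian-loop mass of loops that cross a fixed annulus of positive modulus while staying in $\H$ is finite (Section~\ref{sec:Brownian loop measure}, \cite{lawler-werner}), so the number of such loops is a Poisson variable with mean bounded uniformly over $W$, and therefore has a super-exponentially small tail.

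\textbf{Step 1: the recursive inequality.} Fix radii $r<r'<\rho'<R'<R$. Decompose $\bls^\lambda_{<a}(W)=\ls_1\sqcup\ls_2$ with $\ls_1$ the loops contained in $\{|z|<\rho'\}\cap W$, and separately $\bls^\lambda_{<a}(W)=\ls_1'\sqcup\ls_2'$ with $\ls_1'$ the loops contained in $\{|z|>\rho'\}\cap W$; the families $\ls_1$ and $\ls_1'$ are carried by disjoint sets of loops, hence independent. Applying Lemma~\ref{lm:cluster_decomp} to the first decomposition with small annulus $A(r,r')$, and to the second with small annulus $A(R',R)$ --- exactly as in the derivation of \eqref{eq:multicativity} --- one obtains
\[
\clu_{A(r,R)}(\bls^\lambda_{<a}(W))\ \le\ \min\bigl\{\clu_{A(r,r')}(\ls_1),\ \clu_{A(R',R)}(\ls_1')\bigr\}\ +\ 2N,
\]
where $N=\#\{\,l\in\bls^\lambda_{<a}(W):\ l\text{ crosses }A(r',\rho')\text{ or }A(\rho',R')\,\}$; every error term produced by Lemma~\ref{lm:cluster_decomp} is of this form, because a loop meeting $A(r,r')$ and reaching past $|z|=\rho'$ must cross $A(r',\rho')$, and symmetrically on the outer side, and these middle annuli have positive modulus, so $N$ has a Poisson law with mean uniformly bounded over $W$. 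Conditioning on $\{N<\epsilon n\}$, using $\ls_1\perp\ls_1'$, and rescaling by the scaling invariance of $\bls^\lambda$ to recognize $\clu_{A(r,r')}(\ls_1)$ and $\clu_{A(R',R)}(\ls_1')$ as quantities of type $G$ for annuli of strictly smaller modulus, this yields, for every $\tilde s>0$,
\[
G(n;R/r,a/r)\ \le\ G\bigl((1-\epsilon)n;\ r'/r,\ a/r\bigr)\cdot G\bigl((1-\epsilon)n;\ R/R',\ a/R'\bigr)\ +\ O\bigl((\tilde s)^{\,n}\bigr).
\]

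\textbf{Step 2: the a priori exponential bound.} For any fixed $\mu>1$, $\alpha>0$ and $\lambda\in(0,1]$ one has $G(n;\mu,\alpha)\le q(\mu,\alpha,\lambda)^{\,n}$ with $q<1$. Indeed, $n$ disjoint crossing clusters are supported on $n$ disjoint sets of loops, so the event $\{\clu_{A(1,\mu)}\ge n\}$ is a disjoint occurrence of $n$ copies of $\{\clu_{A(1,\mu)}\ge 1\}$; the van den Berg--Kesten inequality for Poisson point processes (cf.\ Remark~\ref{rmk:thm_main}, \cite{berg}) then gives $G(n;\mu,\alpha)\le G(1;\mu,\alpha)^n$, and $G(1;\mu,\alpha)=\prob{\clu_{A(1,\mu)}(\bls^\lambda_{<\alpha}(\H))\ge 1}<1$ by the monotonicity of the crossing event noted above (with positive probability no cluster crosses $A(1,\mu)$).

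\textbf{Step 3 and the main obstacle.} It remains to iterate Step~1 and feed in Step~2 at the bottom of the recursion to conclude that for every $s>0$, $G(n;R/r,a/r)\le C_s\,s^n$. The multiplicative shape of Step~1 --- two tails evaluated at $(1-\epsilon)n$ --- is precisely what can convert an exponential bound into a super-exponential one: unfolding $k$ rounds produces $2^k$ factors at argument $(1-\epsilon)^k n$, i.e.\ an effective rate inflated by $(2(1-\epsilon))^k$, and for $\epsilon<\tfrac12$ the factor $2(1-\epsilon)$ exceeds $1$. The genuinely delicate point --- and what I expect to be the technical core of Sections~\ref{sec:3.3}--\ref{sec:iter} --- is that each round of Step~1 also shrinks the moduli of the two sub-annuli, for which the crossing probabilities, hence the a priori rates of Step~2, degrade; one therefore has to organize the recursion carefully (keeping a controlled positive ``buffer'' between each sub-annulus and the region its loops occupy, balancing the split, and tracking how the diameter cutoff transforms under the rescalings) so that the gain per round strictly dominates this geometric degradation, and so that the accumulated $O((\tilde s)^n)$ errors remain $O(s^n)$ --- which one arranges by starting with a sufficiently small Poisson-tail base. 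Maintaining all of this uniformly over $W\subseteq\H$, in spite of the non-monotonicity of $\clu$, is what forces the monotonicity-of-the-crossing-event observation and the systematic use of scaling invariance, and achieving the balance that turns exponential decay into super-exponential decay is, I expect, the crux of the proof.
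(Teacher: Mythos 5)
Your Steps 1 and 2 are fine and track the paper faithfully: Step 1 reproduces the quasi-multiplicativity \eqref{eq:multicativity} derived from Lemma~\ref{lm:cluster_decomp} and the Poisson tail, and Step 2 is the a priori exponential bound via van den Berg's inequality that the paper also invokes (Step~4 of Section~\ref{sec:iter}). The problem is Step 3, which you announce but do not carry out, and whose underlying mechanism is, I believe, incorrect as stated.

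Your heuristic is that unfolding $k$ rounds of the quasi-multiplicative inequality produces $2^k$ tails at argument $(1-\epsilon)^k n$, hence an ``effective rate inflated by $(2(1-\epsilon))^k$.'' This tacitly assumes the base rate $q$ from Step 2 is a fixed constant. It is not: after $k$ rounds the sub-annuli have modulus of order $m/2^k$, and the one-crossing probability $q(\mu)=\sup_W\prob{\clu_{A(1,e^\mu)}(\bls^\lambda_{<\alpha}(W))\ge 1}$ tends to $1$ as $\mu\to 0$, and does so \emph{very} fast --- roughly $1-q(\mu)\approx e^{-c/\mu}$ (unrolling the thin annulus to a strip of aspect ratio $2\pi/\mu$ and using the standard exponential decay for no spanning cluster of a long rectangle). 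Feeding this into your scheme gives $q(m/2^k)^{2^k(1-\epsilon)^k n}$, whose logarithm is of order $-(2(1-\epsilon))^k\,e^{-c2^k/m}\,n$; the double-exponential loss from the shrinking moduli overwhelms the polynomial gain $(2(1-\epsilon))^k$, and the bound degenerates to $1$ rather than improving. So the ``careful buffering'' you defer to is not a bookkeeping issue; the subdividing recursion cannot produce the super-exponential decay.

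The paper's proof avoids this entirely by \emph{not} subdividing the annulus. It fixes the annulus $A(r,R)$ and instead conditions on the position of the $n$-th rightmost crossing component $D_n$ of $\bls^\lambda_{<a}(U)$. If $D_n$ is already pinned near the left boundary sector $A\setminus A^{(\eta)}$, the conditional probability of an $(n+1)$-st crossing cluster is at most $s/2$ by the thin-tube estimate Lemma~\ref{lm:narrow_crossing}, giving the $\tfrac{s}{2}f(n)$ term of \eqref{eq:iter}. Otherwise all $n$ clusters live in the sub-sector $A^{(\eta)}(r,R)$, which is then mapped by the explicit power map $\phi_\eta$ onto the \emph{strictly larger} annulus $A(r_{0.5},R_{0.5})\supset A(r,R)$; the extra modulus is split off as a fixed annulus $A(R_{0.6},R_{0.5})$, to which the BK-type bound is applied once to produce the factor $q^n$ in \eqref{eq:iter}. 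Because the recursion argument $f((1-\epsilon)n)$ is always evaluated for the same annulus $A(r,R)$, there is no degradation of rates, and \eqref{eq:iter} closes. These two ideas --- the thin-tube conditional estimate and the sector-to-annulus conformal widening --- are the actual engine of the proof and are absent from your proposal.
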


\begin{remark}
Different from Theorem \ref{thm:main} and Corollary \ref{cor:main2}, the supremum taken in Proposition \ref{prop:clusterMajor} is not restricted to simply connected domains. This is validated by the flexibility of the construction of the Brownian loop soup, and it helps to simplify
the discussion on the distortion in conformal mappings used in the proof of Proposition \ref{prop:clusterMajor}.
\end{remark}

\begin{proof}[Strategy of the proof of Proposition \ref{prop:clusterMajor}]
Let us define
\begin{equation}\label{eqdef:f(n)}
f(n):=\sup_{U\subseteq\mathbb H}\mathbb P\left[\clu_{A(r,R)}(\bls^\lambda_{<a}(U))\ge n\right].
\end{equation}
We estimate $f(n)$ inductively, where the step of induction can be described as follows. Note that intuitively, conditioned on having $n$ crossing clusters, one can expect two scenario. In the first one, the space remaining to accommodate one more crossing cluster becomes less and less, leading to a multiplying factor tending to $0$. In the second scenario, all $n$ crossing clusters cross $A(r,R)$ inside a strictly smaller subset 
\[
A^{(\eta)}(r,R):=\{z\in A(r,R): 0<\arg z<\eta<\pi\}
\]
for some fixed $\eta$, depending only on $s$. 
Then, we can conformally map $A^{(\eta)}(r,R)$ to the annulus $A(r',R')$ with $r'<r<1<R<R'$ and, by conformal invariance, get a sample of the Brownian loop soup having $n$ clusters crossing $A(r',R')$. A technical analysis shows that the probability to have such a sample can be upper-bounded by $cq^n\cdot f((1-\epsilon)n)+O(s^{2n})$. As a result we find out that for all $s,\epsilon \in(0,1)$, we can find some $c>0$, $q<1$ and any $\epsilon>0$, the following holds:
\begin{equation}\label{eq:iter}
	f(n+1) \le \frac{s}{2}f(n)+cq^n\cdot f((1-\epsilon)n)+O(s^{2n}).
\end{equation}
Let us mention again here the constants in $O(s^{2n})$ depend on $\epsilon$ and $s$. We claim that \eqref{eq:iter} is sufficient for deducing Proposition \ref{prop:clusterMajor}. 
In fact, if \eqref{eq:iter} holds, we can take $\epsilon$ small enough such that $s^{2\epsilon}>q$. Note that for $n$ large enough, $\frac{cq^n}{s^{\epsilon n+1}}<\frac{1}{2}$. Then \eqref{eq:iter} divided by $s^{n+1}$ gives that
\[
\frac{f(n+1)}{s^{n+1}}\le \frac{1}{2}\frac{f((1-\epsilon)n)}{s^{(1-\epsilon)n}}+\frac{1}{2}\frac{f(n)}{s^n}+O(s^{n-1}),
\]
which implies that $\frac{f(n)}{s^{n}}$ is bounded for all $n$, hence the super-exponential decay of $f(n)$. 

Together with \eqref{eq:iter} and \eqref{eqdef:f(n)}, this completes the proof of Proposition \ref{prop:clusterMajor} modulo the technical proof of \eqref{eq:iter}, which is postponed to Section \ref{sec:iter}. 
\end{proof}

The following result on the probability of the existence of a crossing cluster inside a (conformally) thin tube will be used in Section \ref{sec:iter}.

\begin{figure}[ht]
\centering
\includegraphics[width=0.5\textwidth]{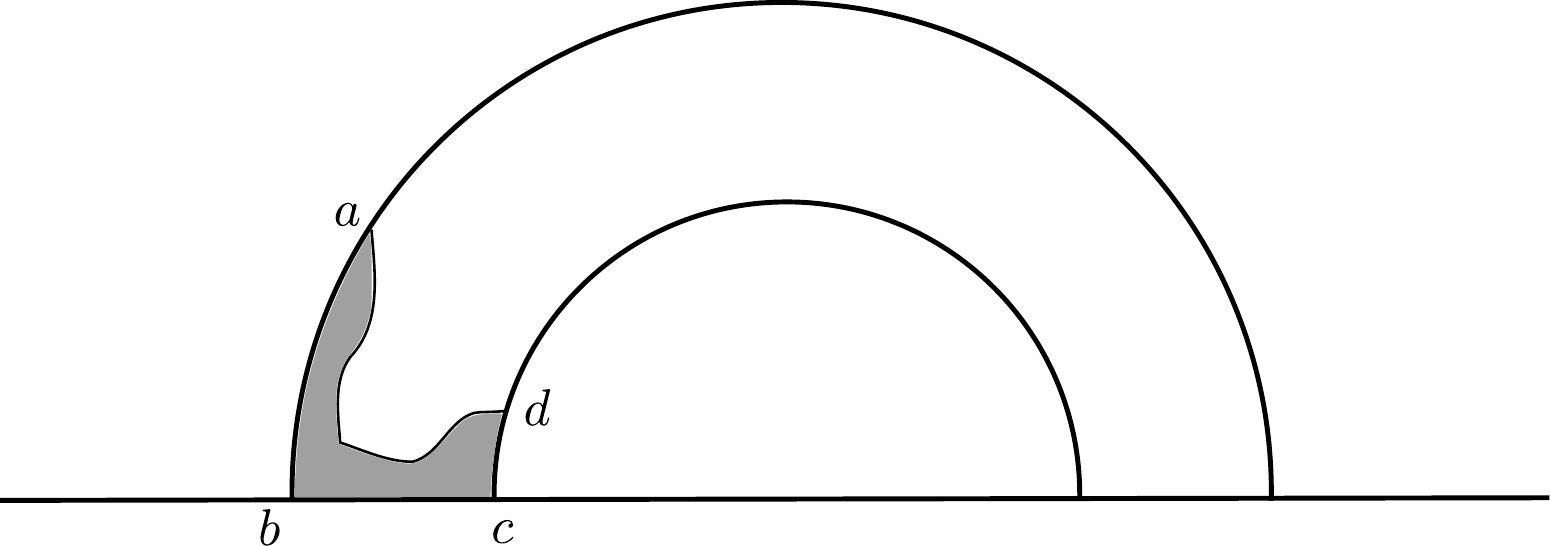}
\caption{An illustration for $(Q;a,b,c,d)$ in Lemma \ref{lm:narrow_crossing}.}
\end{figure}

\begin{lemma}\label{lm:narrow_crossing}
	For any $\epsilon>0$ and $0<r<R$, there exists $\delta>0$ such that uniformly for all crossing-quads inside $A(r,R)$ of the form $(Q;a,b,c,d)$ with $b=-R$ and $c=-r$, such that 
	\[(ab)\subset \partial B_R, (bc)\subset\mathbb R_-, (cd)\subset \partial B_r\text{ and }\inf_{z\in(bc),w\in(ad)}|z-w|<\delta,
	\] we have
	\begin{equation}\label{eq:narrow_crossing}
	\mathbb P[(ab) \text{ and }(cd) \text{ are }\text{connected by a chain of loops in }\bls^\lambda(\H) \text{ not touching }(bc)\text{ and }(ad)]<\epsilon.
	\end{equation}
\end{lemma}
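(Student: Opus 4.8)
The statement is a ``no crossing through a thin tube'' estimate, and the natural strategy is to use the finiteness and conformal invariance of the Brownian loop measure together with the monotonicity/restriction properties of the loop soup. First I would reduce the probability of the event in \eqref{eq:narrow_crossing} to the total Brownian-loop-measure mass of a suitable family of loops. Indeed, if $(ab)$ and $(cd)$ are connected by a chain of loops in $\bls^\lambda(\H)$ avoiding $(bc)$ and $(ad)$, then at least one loop of the chain must \emph{cross} the quad $Q$, i.e. there is a loop $l\in\bls^\lambda(\H)$ that comes within distance less than (something comparable to) $\mathrm{diam}(Q)$ of both $(bc)$ and $(ad)$ while staying (essentially) inside the tube-like region $Q$; more robustly, the union of the chain is a connected set inside $A(r,R)$ touching both arcs $(ab)$ and $(cd)$, so the chain ``crosses'' the long thin conformal rectangle $Q$. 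Thus the probability in question is bounded by $\mathbb{P}[\bls^\lambda(\H)$ has a loop crossing $Q]$, plus a chain-of-several-loops correction, but it is cleaner to bound it directly by the mass of loops that are relevant.

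\textbf{Key steps.} (i) Fix $\epsilon>0$ and $r<R$. Let $Q$ be any admissible quad with $\inf_{z\in(bc),w\in(ad)}|z-w|<\delta$. By Lemma 13 of \cite{lawler-werner} (see \S\ref{sec:Brownian loop measure}), the mass under $\lm_\H$ of loops staying in $\H$ that intersect both $B_{r}$-type and $\mathbb{C}\setminus B_R$-type sets is finite; more to the point, the mass of loops intersecting $A(r,R)$ and separating the two arcs of $\partial A(r,R)$ inside $Q$ is finite. (ii) Use conformal invariance: map $Q$ to the rectangle $[0,1]\times[0,m(Q)]$ with $(bc),(ad)$ going to the two short sides $[0,1]\times\{0\}$ and $[0,1]\times\{m(Q)\}$. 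Show that $\inf|z-w|<\delta$ forces $m(Q)\to\infty$ as $\delta\to 0$, uniformly over admissible $Q$ inside $A(r,R)$ — this is a standard modulus-of-continuity/extremal-length estimate using that $Q\subset A(r,R)$ is bounded and the two arcs lie on $\partial B_R$, $\mathbb R_-$, $\partial B_r$, so the tube is geometrically thin. (iii) A chain of loops in $\bls^\lambda(\H)$ connecting the two short sides of the rectangle $[0,1]\times[0,m(Q)]$ projects to a crossing of a very long thin rectangle by loops of the conformally-transported loop soup $\bls^\lambda(\H')$; by restriction the mass of loops involved is at most $\lm_{\mathbb H}$-mass of loops crossing (the width of) such a rectangle, which is bounded by $C e^{-c\,m(Q)}$ by the standard estimate that the Brownian loop measure of loops crossing a rectangle of aspect ratio $m$ decays exponentially in $m$ (quasi-multiplicativity / subadditivity along the $m$ unit squares, each contributing a finite mass $M_0<\infty$, giving mass $\le M_0 \cdot e^{-c m}$ once one accounts for the fact that a crossing loop of a long rectangle is very unlikely). (iv) Then by the Poissonian structure, $\mathbb{P}[\bls^\lambda(\H)$ contains a chain crossing $Q] \le \mathbb{P}[\bls^\lambda(\H)$ contains $\ge 1$ loop crossing the long thin rectangle $] + (\text{multi-loop chains})$; the first term is $\le 1-e^{-\lambda \cdot C e^{-c m(Q)}} \le \lambda C e^{-c m(Q)}$, and choosing $\delta$ small enough that $m(Q)$ is large enough makes this $<\epsilon$. (One handles chains of $k\ge 2$ loops by noting each loop of the chain must itself cross a rectangle of modulus $\ge m(Q)/k$, and summing a convergent geometric-type series; alternatively, observe that the union of the chain is a single connected crossing set of the modulus-$m(Q)$ rectangle and bound directly the probability that such a connected crossing exists, which again decays exponentially in $m(Q)$ by a standard loop-soup crossing estimate such as the one invoked in Remark \ref{rmk:thm_main}.)

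\textbf{Main obstacle.} The crux is step (ii)--(iii): upgrading the metric smallness $\inf_{z\in(bc),w\in(ad)}|z-w|<\delta$ into a quantitative lower bound $m(Q)\ge m(\delta)$ with $m(\delta)\to\infty$, \emph{uniformly} over all admissible quads inside the fixed annulus $A(r,R)$, and then converting a large conformal modulus into a genuinely small loop-soup crossing probability. The first part is an extremal-length argument: $Q$ is pinched between two boundary points at distance $<\delta$ while the ``long'' direction has to travel a macroscopic distance around the annulus, so the family of curves in $Q$ joining $(ab)$ to $(cd)$ has large extremal length; one must check this is robust to the shape of $Q$ (it could be wiggly), which is where one uses that $Q \subset A(r,R)$ together with the constraint $(bc)\subset \mathbb R_-$ to prevent $Q$ from being long-and-fat instead of long-and-thin. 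The second part — exponential decay of the loop-soup crossing probability of a high-modulus quad — should follow from the BK-inequality-based exponential estimate already cited (\cite[Lemma 9.6]{sheffield-werner}), applied to the conformal image, or directly from the finiteness of the Brownian loop measure of loops crossing a unit-aspect-ratio conformal rectangle plus subadditivity of the loop measure along a chain of such rectangles subdividing $Q$. Granting these two ingredients, choosing $\delta$ small enough to make $m(Q)$ exceed the threshold implied by $\epsilon$ finishes the proof.
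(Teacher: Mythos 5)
Your proposal takes a genuinely different route from the paper, and it has two concrete gaps. The paper's proof is a short soft argument with no extremal-length or loop-measure computation at all: suppose for contradiction the crossing probability stays above some $\epsilon_0>0$ along a sequence of admissible quads with $\delta_k\to 0$; observe that any chain of loops joining $(ab)$ and $(cd)$ inside $V$ separates $(bc)$ from $(ad)$, hence (since these two arcs come within $\delta_k$ of each other and $(bc)\subset\mathbb R_-$) the chain approaches $\partial\H$ within order $\delta_k$; apply the Kochen--Stone lemma to find, with positive probability, clusters of $\bls^\lambda(\H)$ of diameter at least $R-r$ accumulating at $\mathbb R_-$; this contradicts the almost-sure boundary avoidance of large clusters of the subcritical Brownian loop soup (\cite[Lemma~9.7]{sheffield-werner}). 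The geometric input is merely ``the crossing chain comes close to $\partial\H$,'' not a modulus estimate.

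In your plan, the first gap is step (ii): the claim that $\inf_{(bc),(ad)}|z-w|<\delta$ forces $m(Q)\to\infty$, uniformly over all admissible $Q\subset A(r,R)\cap\H$, is a genuine geometric assertion that you flag but do not prove; one must rule out quads where the close approach happens only at a corner or along a thin tentacle of $(ad)$, and it is not evident from a single pinch point that every $(ab)$-to-$(cd)$ crossing must pass through a bottleneck. The second, and more serious, gap is steps (iii)--(iv): you need that the loop-soup probability of a \emph{chain} crossing a quad tends to $0$ as $m(Q)\to\infty$, uniformly, but the source you invoke, \cite[Lemma~9.6]{sheffield-werner} (BK inequality), gives exponential decay in the \emph{number of disjoint crossings} of a fixed quad, not decay in the conformal modulus; the conformal map $Q\to[0,1]\times[0,m(Q)]$ does not carry $\bls^\lambda(\H)$ to a standard loop soup, so the ``chain of unit squares'' heuristic does not transfer, and naive subadditivity of the loop measure over sub-rectangles yields an upper bound that grows, not decays, with $m(Q)$. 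Likewise the claim that a chain of $k$ loops has one loop crossing a quad of modulus $\ge m(Q)/k$ is not justified. These estimates may be provable with additional work, but the paper's contradiction argument sidesteps all of them by using boundary avoidance of clusters instead.
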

\begin{proof}
	Suppose the contrary, then there exists a sequence of quads $(Q_\delta;a_\delta, b_\delta,c_\delta, d_\delta)\subset A$ satisfying the same conditions as in the statement, such that the probability that $(a_\delta b_\delta )$ and $(c_\delta d_\delta )$ are connected by a chain of loops in $\bls^\lambda(\H)$ not touching $(b_\delta c_\delta)$ and $(a_\delta d_\delta)$ is uniformly away from $0$. By Kochen-Stone lemma, with positive probability, we can find a sequence of  clusters of $\bls^\lambda(\H)$ arbitrarily close to $\mathbb R_-$. These clusters are of diameter larger than $R-r$, which is not possible in the sub-critical regime of the Brownian loop soup with intensity $\lambda \mu^{\text{loop}}_\Omega$, $\lambda\in(0,1]$, see e.g. \cite[Lemma 9.7]{sheffield-werner}. Thus by contradiction we have \eqref{eq:narrow_crossing}.
\end{proof}

\subsection{Proof of the recursive inequality \texorpdfstring{\eqref{eq:iter}}{}.}\label{sec:iter}
Throughout this section, we fix the intensity of the Brownian loop soup in \eqref{eq:iter} to be some $\lambda\in(0,1]$ and omit it. Before diving into the technical details of the proof, let us first explain the choice of parameters.
For all $A(r,R)$, denote the sector of angle $\eta$ by
\[
A^{(\eta)}(r,R) :=A(r,R)\cap\{z\in\H: 0<\arg z<\eta\}.
\] 
For any open subset $U\subset\H$, denote the Brownian loop soup on top of it by
\[
\bls^{(\eta)}(U)=\bls(A^{(\eta)}(0,\infty)\cap U),
\]
with the mnemonics 
\[
A(r,R)=A^{(\pi)}(r,R),\,
\bls(U)=\bls^{(\pi)}(U).\]
For each \emph{fixed} $s$, we first choose $\eta$ sufficiently close to $\pi$ such that the probability of having a cluster in $\mathcal B(\H)$ which crosses $A(r,R)$ inside a quad $
(Q;a,b,c,d)$ with the arc $(ab)\subset \partial B_R$, $(bc)\subset \partial B_r$, $(cd)\subset \partial B_r$ and $(ad)$ not contained in $A^{(\eta)}(r,R)$ is less than $\frac{s}{2}$ by Lemma \ref{lm:narrow_crossing}. For all $n\in\N$, conditioned on the event that $n$ crossing clusters cross $A(r,R)$ inside $A^{(\eta)}(r,R)$, a family of radii is required for applying Lemma \ref{lm:cluster_decomp}.

Due to the scaling invariance of the Brownian loop soup, we suppose without loss of generality that $0<r<1<R$. Define
\begin{equation}\label{rR}
\begin{array}{lcr}
    r_\beta = r^{\frac{(1-\beta)\pi+\beta\eta}{\eta}}, &R_\beta = R^{\frac{(1-\beta)\pi+\beta\eta}{\eta}} &\text{ if } \beta\in[0,1]\\
	r_\beta = r^{\frac{(2-\beta)
	\pi+(\beta-1)\eta}{\pi}}, &R_\beta=R^{\frac{(2-\beta)\pi+(\beta-1)\eta}{\pi}} &\text{   if } \beta\in[1,2].
\end{array}
\end{equation}

Note that $r_1=r$, $R_1=R$, $r_\beta$ is increasing in $\beta$ and $R_\beta$ is decreasing in $\beta$. Therefore, $A(r_{\beta_1},R_{\beta_1})\subset A(r_{\beta_2},R_{\beta_2})$ if $\beta_1>\beta_2$. See {Figure \ref{pic:radii}} for an illustration.

For each open subset $U\subseteq \H$, conditioned on the event that 
$\clu_{A(r,R)}(\bls_{<a}(U))\ge n$, we can order the clusters counterclockwise by their rightmost crossing connected components, and denote by $D_1,\ldots,D_n$ the first $n$ components, from right to left in $A(r,R)$, see e.g. Figure \ref{pic:D_n}. Denote by $E_{n,\eta}(U)$, $\tilde{E}_{n,\eta}(U)$ the events that 
\begin{equation}
\begin{aligned}
E_{n,\eta}(U)&:= \{\bls_{<a}(U)\text{ has }n\text{ crossing clusters and }D_n \text{ is inside }A^{(\eta)}(r,R)\}. \\
\tilde{E}_{n,\eta}(U)&:= \{\bls_{<a}(U)\text{ has }n\text{ crossing clusters and }D_n \text{ is not contained in }A^{(\eta)}(r,R)\}.
\end{aligned}
\end{equation}
Note that 
\[
\tilde{E}_{n,\eta}(U)\cup E_{n,\eta}(U)= \{\clu_{A(r,R)}(\bls_{<a}(U))\ge n\} \text{, }\sup_{U\subseteq \H}\mathbb{P}[\tilde{E}_{n,\eta}(U)]\le f(n)
\]
and conditioned on $E_{n,\eta}(U)$, it may happen that the $n$-th cluster is not contained in $A^{(\eta)}(r,R)$. Now we can embark on the proof of the recursive inequality \eqref{eq:iter}.

\begin{figure}[ht]
\centering
\includegraphics[width=.5\textwidth]{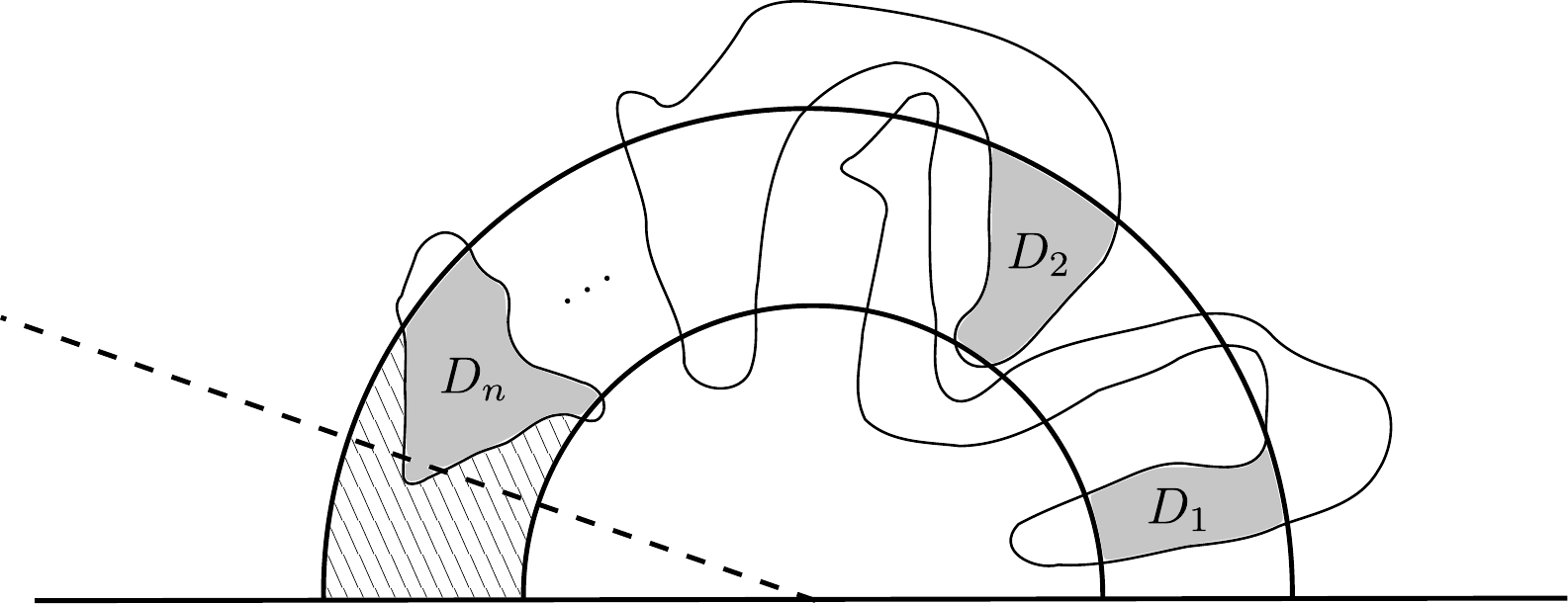}
\caption{
An illustration of the rightmost components $(D_i)$ of crossing clusters. On the event $\tilde{E}_{n,\eta}$, $D_n$ must intersect $A(r,R)\backslash A^{(\eta)}(r,R)$. If $D_{n+1}$ exists, then it must live in the shaded area.}
\label{pic:D_n}
\end{figure}

\vspace{2mm}
\noindent\emph{Step 1: Decompose the crossing probability.} 
\vspace{1mm}
Let us decompose $f(n+1)$ with respect to $E_{n,\eta}(U)$ and $\tilde{E}_{n,\eta}(U)$. Assume that $\clu_{A(r,R)}(\bls_{<a}(U))\ge n+1$ and $\tilde{E}_{n,\eta}(U)$ happens. By definition, this means that the rightmost crossing connected component $D_n$ of the $n$-th cluster is not within $A^{(\eta)}(r,R)$, which implies that the $(n+1)$-th cluster crosses $A(r,R)$ inside some crossing quad that satisfies the assumptions of Lemma \ref{lm:narrow_crossing} with $\epsilon=\frac{s}{2}$ (by the choice of $\eta$),  as illustrated in Figure \ref{pic:D_n}. Conditioned on the event $\tilde{E}_{n,\eta}$, the loops outside the clusters to which $D_1,\ldots,D_n$ belong is an independent Brownian loop soup. Then if in addition $D_n$ intersects $A(r,R)\setminus A^{(\eta)}(r,R)$, it follows from Lemma \ref{lm:narrow_crossing}
\[
\sup_{U\subseteq\H}\mathbb{P}\left[\clu_{A(r,R)}(\bls_{<a}(U))\ge n+1|\tilde{E}_{n,\eta}\right]\le \frac{s}{2}.
\]
Therefore,
\begin{equation}\label{eq:prob_decomp}
\begin{aligned}
f(n+1)=&\sup_{U\subseteq\mathbb H}\mathbb P\left[\clu_{A(r,R)}(\bls_{<a}(U))\ge n+1\right] \\ \le
&\sup_{U\subseteq\mathbb H}\left(\mathbb P\left[\tilde{E}_{n,\eta},\clu_{A(r,R)}(\bls_{<a}(U))\ge n+1\right] + \mathbb P\left[E_{n,\eta}(U)\right]\right) \\ \le
&\sup_{U\subseteq\mathbb H} \mathbb P\left[\clu_{A(r,R)}(\bls_{<a}(U))\ge n+1|\tilde{E}_{n,\eta}(U)\right]\mathbb{P}[\tilde{E}_{n,\eta}(U)] + \sup_{U\subseteq\mathbb H}\mathbb P\left[E_{n,\eta}(U)\right]  \\ \le
&\frac{s}{2}\cdot f(n) + \sup_{U\subseteq\mathbb H}\mathbb P\left[E_{n,\eta}(U)\right],
\end{aligned}
\end{equation}

\noindent\emph{Step 2: Decompose the cluster number in} $\mathbb P\left[E_{n,\eta}(U)\right].$ 
In this step we aim to show the following alternative: if $E_{n,\eta}$ happens, either the restricted Brownian loop soup $B^{(\eta)}_{<a}(U)$ has at least $(1-\epsilon)n$ clusters crossing a slightly thinner annulus $A^{(\eta)}(r_{1.5},R_{1.5})$, or we are in the setup to apply a Poisson tail estimate.

Similarly to the proof of Lemma \ref{lm:cluster_decomp}, for any crossing cluster $C$ from $\bls_{<a}(U)$ whose rightmost crossing component $D$ stays in $A^{(\eta)}(r,R)$, it follows from Lemma \ref{lm:LoopPath} that $D$ contains a path $\gamma$ crossing $A^{(\eta)}(r_{1.5};R_{1.5})$ comprised of finitely many arcs of loops in $C$. If the loops in $L_\gamma$ (which give the arcs that constitute $\gamma$) are part of $\bls^{(\eta)}_{<a}(U)$, then $C$ contains a crossing cluster of $A(r_{1.5},R_{1.5})$. Otherwise, we can find a loop $l_C$ in $C$ that intersects both $A^{(\eta)}(r_{1.5},R_{1.5})$ and $U\backslash U^{(\eta)}$, where $U^{(\eta)}=\{z\in U: 0<\arg z<\eta\}$. Recall that $D$ is contained $A^{(\eta)}(r,R)$, therefore $l_C$ crosses $A^{(\eta)}(r, r_{1.5})$ or $A^{(\eta)}(R_{1.5},R)$ to reach $U\backslash U^{(\eta)}$.

Under $E_{n,\eta}(U)$, all components $D_1,\ldots,D_n$ lie in $A^{(\eta)}(r,R)$. Applying this argument to each cluster that $D_i, i=1,\ldots,n$ belongs to, we get that for all $\epsilon'\in(0,1)$,
\begin{equation}\label{eq:clus_decomp}
\begin{aligned}
&\mathbb P\left[E_{n,\eta}(U)\right]\\
\le& \mathbb P\left[\#\left\{l\in\bls_{<a}(U): l\text{ crosses }A^{(\eta)}(r, r_{1.5})\text{ or }A^{(\eta)}(R_{1.5},R) \right\} +\clu_{A^{(\eta)}(r_{1.5},R_{1.5})}\left(\bls^{(\eta)}_{<a}(U)\right)\ge n \right]\\ 
\le&\mathbb P\left[ \#\left\{ l\in\bls_{<a}(U): l\text{ crosses }A^{(\eta)}(r, r_{1.5})\text{ or }A^{(\eta)}(R_{1.5},R)\right \}\ge\epsilon' n\right]\\
&\qquad + \mathbb{P}\left[\clu_{A^{(\eta)}(r_{1.5},R_{1.5})} \left(\bls^{(\eta)}_{<a}(U)\right) \ge (1-\epsilon')n \right]\\
\le& \mathbb{P}\left[\clu_{A^{(\eta)}(r_{1.5},R_{1.5})} \left(\bls^{(\eta)}_{<a}(U)\right) \ge (1-\epsilon')n\right]+O(s^{2n}),
\end{aligned}
\end{equation}
where the last line follows from the fact that the term 
\begin{align*}
    &\# \left\{l\in\bls_{<a}(U): l\text{ crosses }A^{(\eta)}(r,r_{1.5})\text{ or }A^{(\eta)}(R_{1.5},R)\right\}\\
    \le &\# \left\{l\in\bls(\H): l\text{ crosses }A^{(\eta)}(r,r_{1.5})\text{ or }A^{(\eta)}(R_{1.5},R) \right\}
\end{align*}
has a super-exponentially decaying Poisson tail independent of $U$.

\begin{figure}[ht]
\centering
\includegraphics[width=.8\textwidth]{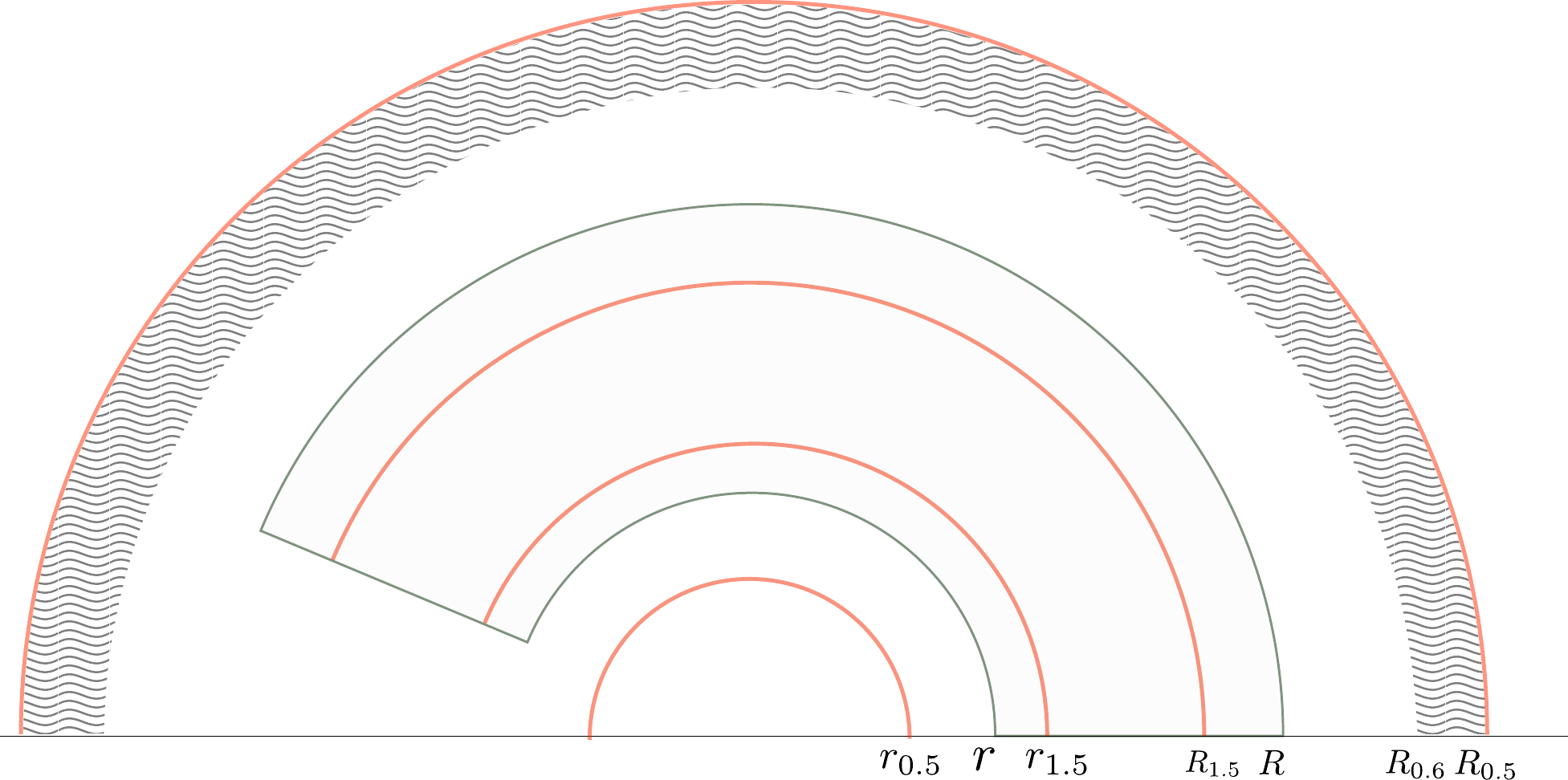}
\caption{The relation of radii defined in \eqref{rR} and corresponding annuli.}
\label{pic:radii}
\end{figure}

The recursive relation \eqref{eq:iter} then reduces to
\begin{equation}\label{eq:clus_eta}
	\sup_{U\subseteq \H}\mathbb P\left[\clu_{A^{(\eta)}(r_{1.5},R_{1.5})}\left(\bls^{(\eta)}_{<a}(U)\right)\ge (1-\epsilon')n \right]\le  cq^n\cdot\sup_{U\subseteq\H}\mathbb P[\clu_{A(r,R)}(\bls_{<a}(U))\ge(1- \epsilon)n]+O(s^{2n}).
\end{equation}

We will prove \eqref{eq:clus_eta} in the next two steps. In fact, it follows from \eqref{rR} that the conformal modulus of the quad $A^{(\eta)}(r_{1.5},R_{1.5})$ is strictly bigger than the conformal modulus of $A(r,R)\cap\H$, which is the main reason for the factor $q^n$ to appear on the right-hand side, see \eqref{eq:exp_decay}. This argument requires a careful justification because $\bls_{<a}(U)$ is not conformally invariant, which requires the constant $c$ (see \eqref{eq:c}) and the correction term $O(s^{2n})$ on the right-hand side of \eqref{eq:clus_eta}.

\vspace{2mm}
\noindent\emph{Step 3: Transform $A^{(\eta)}(r_{1.5},R_{1.5})$ to $A(r_{0.5},R_{0.5})$}.
Define the conformal map from $\H^{(\eta)}=\{z\in \H: 0<\arg z<\eta\}$ to $\H$  
\[
\phi_\eta:z=re^{i\theta}\mapsto r^{\frac{\pi}{\eta}}e^{i\frac{\theta\pi}{\eta}} \text{  for  }r>0, \theta\in(0,\eta),
\] 
then
\[
\phi_\eta (A^{(\eta)}(r_{1.5},R_{1.5})) = A(r_{0.5},R_{0.5}),
\]
hence,
\[
\clu_{A^{(\eta)}(r_{1.5},R_{1.5})}\left(\bls_{<a}^{(\eta)}(U)\right) =  \clu_{A(r_{0.5},R_{0.5})}\left(\phi_\eta(\bls^{(\eta)}_{<a}(U))\right).
\]
Only loops in $U\cap B_{R+a}$ contribute to the left-hand side of the above relation, therefore we assume without loss of generality that $U\subseteq B_{R+a}$. Then the conformal invariance of the Brownian loop measure and a simple computation on the distortion of $\phi_\eta$ give that there exist constants $0<c_1<1<c_2$ depending on $a,\eta,r,R$ such that almost surely
\begin{equation}\label{eq:conf_map}
\bls_{<c_1a}(\phi_\eta(U^{(\eta)})) \subseteq \phi_\eta\left(\bls^{(\eta)}_{<a}(U)\right) \subseteq \bls_{<c_2a}(\phi_\eta(U^{(\eta)})),
\end{equation}
where $U^{(\eta)}=\{z\in U: 0<\arg z<\eta\}$. Let $\ls':=\bls_{[c_1a,c_2a[}(\phi_\eta(U^{(\eta)}))$, a sample of Brownian loops within $\phi_\eta(U^{(\eta)})$ whose diameters are in $[c_1a,c_2a[$. Then by Lemma \ref{lm:cluster_decomp} and the Poissonian tail of $\#\mathcal L'$, we have that for all $\epsilon'\in(0,1)$,

\begin{equation}\label{eq:s^2n}
\begin{aligned}
&\mathbb{P}\left[\clu_{A^{(\eta)}(r_{1.5},R_{1.5})}\left(\bls_{<a}^{(\eta)}(U)\right)\ge (1-\epsilon')n\right]\\ =
&\mathbb P\left[\clu_{A(r_{0.5},R_{0.5})}\left(\phi_\eta(\bls_{<a}^{(\eta)}(U))\right) \ge (1-\epsilon')n\right]\\ \le
&\mathbb P\left[\#\{l\in\ls':l\cap A(r_{0.5},R_{0.5})\neq\emptyset\}+\clu_{A(r_{0.5},R_{0.5})}\left(\bls_{<c_1a}(\phi_\eta(U^{(\eta)}))\right)\ge (1-\epsilon')n\right].\\ \le
&\mathbb P\left[\#\{l\in\ls':l\cap A(r_{0.5},R_{0.5})\neq\emptyset\}\ge \epsilon' n\right]+ \mathbb P\left[\clu_{A(r_{0.5},R_{0.5})}\left(\bls_{<c_1a}(\phi_\eta(U^{(\eta)}))\right)\ge (1-2\epsilon')n\right]\\ \le
&\mathbb P\left[\clu_{A(r_{0.5},R_{0.5})}\left(\bls_{<c_1a}(\phi_\eta(U^{(\eta)}))\right)\ge (1-2\epsilon')n\right]+O(s^{2n}).
\end{aligned}
\end{equation}
Moreover, we claim that there exists a constant $c=c(a,r,R,\eta)$ (independent of $U$) such that
\begin{equation}\label{eq:c}
\begin{aligned}
\mathbb P & \left[\clu_{A(r_{0.5},R_{0.5})}
\left(\bls_{<c_1a}(\phi_\eta(U^{(\eta)})\right)\ge n\right]  \le\\
	&\qquad c \cdot\mathbb P\left[\clu_{A(r_{0.5},R_{0.5})}\left(\bls_{<a}(\phi_\eta(U^{(\eta)})))\right)\ge n\right].
\end{aligned}
\end{equation}
In fact, the independence of $\bls_{\ge c_1a}(\phi_\eta(U^{(\eta)}))$ and $\bls_{< c_1a}(\phi_\eta(U^{(\eta)}))$ gives that
\begin{align*}
&\mathbb P[\clu_{A(r_{0.5},R_{0.5})}(\bls_{<a}(\phi_\eta(U^{(\eta)})))\ge n]\\ \ge
&\mathbb P[\clu_{A(r_{0.5},R_{0.5})}(\bls_{<c_1 a}(\phi_\eta(U^{(\eta)})))\ge n,\,\bls_{\ge c_1a}(\phi_\eta(U^{(\eta)}))=\emptyset]\\
=&\mathbb P[\clu_{A(r_{0.5},R_{0.5})}\left(\bls_{<c_1a}(\phi_\eta(U^{(\eta)}))\right)\ge n]\cdot\mathbb P\left[\bls_{\ge c_1a}(\phi_\eta(U^{(\eta)}))=\emptyset\right]\\
\ge&
\mathbb P
[\clu_{A(r_{0.5},R_{0.5})}\left(\bls_{<c_1a}({\phi_\eta(U^{(\eta)})})\right)\ge n]
\cdot
\mathbb P\left[\bls_{\ge c_1a}({\H})=\emptyset\right],
\end{align*}
and \eqref{eq:c} follows by taking $c^{-1}=\mathbb P\left[\bls_{\ge c_1a}({\H})=\emptyset\right]>0$. Therefore by \eqref{eq:s^2n}, \eqref{eq:c} and taking the supremum, we have
\begin{equation}\label{eq:bls_eta}
	\sup_{U\subseteq\H}\mathbb P \left[\clu_{A(r_{1.5},R_{1.5})}\left(\bls^{(\eta)}_{<a}(U)\right)\ge (1-\epsilon')n \right]\le c\cdot\sup_{U\subseteq\H}\mathbb P[\clu_{A(r_{0.5},R_{0.5})}(\bls_{<a}(U))\ge (1-2\epsilon')n]+O(s^{2n}).
\end{equation}

\vspace{2mm}
\noindent\emph{Step 4: 
Compare the crossing cluster number in $A(r_{0.5},R_{0.5})$ and $A(r,R)$.}
\vspace{1mm}
In this step, we will show that for any $\epsilon'\in(0,1)$, there exists $0<q<1$ such that

\begin{equation}\label{eq:exp_decay}
\sup_{U\subseteq\H}\mathbb P\big[\clu_{A(r_{0.5},R_{0.5})}\left(\bls_{<a}(U)\right)\ge (1-2\epsilon')n\big] \le q^n\cdot\sup_{U\subseteq\H}\mathbb P\left[\clu_{A(r,R)}\left(\bls_{<a}(U)\right)\ge (1-3\epsilon')n\right]+O(s^{2n}).
\end{equation}
Recall that $r_\theta$ is increasing in $\theta$, $R_\theta$ is decreasing in $\theta$ (see \eqref{rR}), $r=r_1, R=R_1$, and $A(r,R)$, $A(R_{0.6},R_{0.5}) \subseteq A(r_{0.5}, R_{0.5})$. Therefore we have
\[
	\mathbb P\left[\clu_{A(r_{0.5},R_{0.5})}\left(\bls_{<a}(U)\right)\ge n\right] \le \mathbb P\left[\clu_{A(r,R)}\left(\bls_{<a}(U)\right)\ge n,\clu_{A(R_{0.6},R_{0.5})}\left(\bls_{<a}(U)\right)\ge n\right].
\]
By Lemma \ref{lm:cluster_decomp}, if we write $U':=\{z\in U: |z|<R_{0.8}\}$, we have that
\[
\clu_{A(r,R)}\big(\bls_{<a}(U)\big)\le  \clu_{A(r,R)}\left(\bls_{<a}(U')\right)+\#\{l\in\bls_{<a}(U):l\text{ crosses }A(R,R_{0.8})\},
\]
and
\[
\clu_{A(R_{0.6},R_{0.5})}\big(\bls_{<a}(U)\big) \le \clu_{A(R_{0.6},R_{0.5})}\left(\bls_{<a}(U\backslash U')\right)+\#\{l\in\bls_{<a}(U):l\text{ crosses }A(R_{0.8},R_{0.6})\}.
\]
Combined with the fact that $A(r,R)\cap A(R_{0.6}, R_{0.5}) = \emptyset$, we have 
\begin{align*}
	&\mathbb P\left[\clu_{A(r_{0.5},R_{0.5})}\left(\bls_{<a}(U))\right)\ge (1-2\epsilon')n\right]\\ \le
	& \mathbb P\big[\clu_{A(r,R)}\left(\bls_{<a}(U')\right)\ge (1-3\epsilon') n \text{ and } \clu_{A(R_{0.6},R_{0.5})}\left(\bls_{<a}(U\backslash U')\right)\ge(1-3\epsilon')n\big] \\
	&+\mathbb P\big[\#\{l\in\bls_{<a}(\H):l \text{ crosses } A(R,R_{0.8})\text{ or }A(R_{0.8},R_{0.6})\}\ge \epsilon' n\big]\\
	\le
	&\mathbb P\left[\clu_{A(r,R)}\left(\bls_{<a}(U')\right)\ge(1-3\epsilon')n\right]
\times\mathbb P\left[\clu_{A(R_{0.6},R_{0.5})}\left(\bls_{<a}(U\backslash U')\right)\ge(1-3\epsilon')n\right]+O(s^{2n}),
\end{align*}
where the last inequality follows from the independence of Brownian loop soup in disjoint domains and the super-exponential tail of distribution on the number of loops in $\bls_{<a}(\mathbb H)$ which cross $A(R,R_{0.8})$ or $A(R_{0.8},R_{0.6})$.
Also note that once $\epsilon',\eta$ are fixed, there exists $0<q<1$ (the smaller $\epsilon'$ is, the smaller $q$ is) such that
\begin{equation}
 {\sup_{U\subseteq\H}\mathbb P\left[\clu_{A(R_{0.6},R_{0.5})}\left(\bls_{<a}({U})\right)\ge (1-3\epsilon')n\right]\le q^n}
\end{equation}
 due to BK's inequality \cite{berg} (as in Lemma 9.6 of \cite{sheffield-werner}) for disjoint-occurrence event of a Poissonnian sample. This completes the proof of \eqref{eq:exp_decay}.

\vspace{2mm}
\noindent\emph{Conclusion.}
\vspace{1mm}
To summarize, 
{we deduce \eqref{eq:clus_eta} from 
\eqref{eq:bls_eta} and
\eqref{eq:exp_decay}.
Then}
combining \eqref{eq:prob_decomp}, \eqref{eq:clus_decomp} and \eqref{eq:clus_eta}, we have that for any $\epsilon'\in(0,1)$,
\begin{align*}
f(n+1) &\le \frac{s}{2}f(n) + \mathbb P\left[E_{n,\eta}\right] \\
&\le \frac{s}{2}f(n)+
{\sup_{U\subseteq\H}}
\mathbb{P}\left[\clu_{{ A^{(\eta)}}(r_{1.5},R_{1.5})}\left(\bls^{(\eta)}_{<a}({U})\right)\ge (1-\epsilon')n\right]+O(s^{2n})\\ 
&\le \frac{s}{2}f(n)+ c\cdot\sup_{U\subseteq\H}\mathbb P[\clu_{A(r_{0.5},R_{0.5})}(\bls_{<a}(U))\ge (1-2\epsilon')n]+O(s^{2n})\\
&\le \frac{s}{2}f(n)+ cq^n\cdot
{\sup_{U\subseteq\H}}
\mathbb P[\clu_{A(r,R)}(\bls_{<a}({ U}))\ge(1-3\epsilon')n]+O(s^{2n}),
\end{align*}
which is exactly \eqref{eq:iter} if we take $\epsilon$ to be $3\epsilon'$.

\section{Proof of {Theorem \ref{thm:main}}{}}\label{sec:main_proof}
Denote by $L(r,R)$ the set of all loops in $\H$ crossing $A(r,R)$. Recall that the mass of $L(r,R)$ under the Brownian loop measure is finite, and in the following we denote by $\mu_L$ the Brownian loop measure $\mu$ restricted to $L(r,R)$. 
To deal with single loops,  we abuse the notation  $\cro_A(l)$ to denote the maximum number of non-overlapping time intervals whose image under $l$ cross $A$. In particular, the crossings of a single loop are not necessarily disjoint, and $\cro_A(\{l\})\le\cro_A(l)$, see Figure \ref{pic:difference} for an illustration, and see \eqref{eq:uon} for the reason to define $\cro_A(l)$.
We start with a coarse estimate on the crossing number of an annulus by a single loop in the Brownian loop soup.

\begin{figure}[ht]
\centering
\includegraphics[width=.15\textwidth]{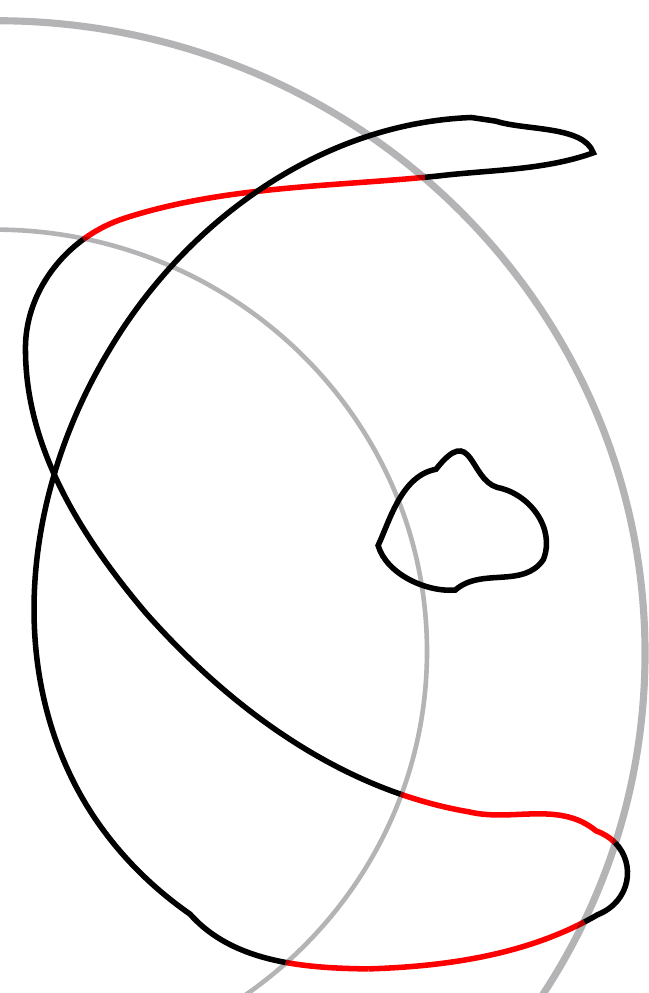}
\caption{For the loop captured from the top-right corner of Figure \ref{pic:cross}, we have $\cro(l)=4$ and $\cro(\{l\})=3$.}
\label{pic:difference}
\end{figure}

\begin{lemma}\label{lm:loop_crossing}
	Let $\bls(\H)$ be the Brownian loop soup with intensity $\lambda\in(0,1]$ on $\H$. Then there exists $q=q(r,R,\lambda)$ such that
	\[
	\mathbb{P}\left[\sum_{l\in\bls(\H)}\cro_{A(r,R)}(l)\ge n\right]= O(q^n).
	\]
\end{lemma}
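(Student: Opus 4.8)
The plan is to exploit the fact that $\sum_{l \in \bls(\H)} \cro_{A(r,R)}(l)$ counts crossings weighted per-loop, so it decomposes naturally over the Poisson process of loops. First I would observe that only loops in $L(r,R)$ (those crossing $A(r,R)$) contribute, and the total mass $M := \lambda \mu^{\text{loop}}_\H(L(r,R)) = \lambda \mu_L(L(r,R))$ is finite by the discussion in Section~\ref{sec:Brownian loop measure}. Thus $\bls(\H) \cap L(r,R)$ consists of a Poisson$(M)$ number $N$ of i.i.d.\ loops $l_1, l_2, \ldots$ each distributed according to the normalized measure $M^{-1}\lambda\mu_L$, and $\sum_{l \in \bls(\H)} \cro_{A(r,R)}(l) = \sum_{i=1}^N \cro_{A(r,R)}(l_i)$.

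The key analytic input I would establish is an exponential moment bound: there is $\beta > 0$ with $\E{e^{\beta \cro_{A(r,R)}(l_1)}} < \infty$ under the normalized loop law, equivalently $\int_{L(r,R)} e^{\beta\,\cro_{A(r,R)}(l)}\,d\mu_L(l) < \infty$. To see this, note that if a single loop makes $k$ crossings of $A(r,R)$ then by the strong Markov property applied successively at the hitting times of $\partial B_r$ and $\partial B_R$, the Brownian path must traverse the annulus (in one direction or the other) at least $k$ times; each such traversal costs a bounded conformal factor. More concretely, $\mu_L(\{l : \cro_{A(r,R)}(l) \ge k\})$ decays geometrically in $k$: conditioning on the first crossing and using that, after reaching one boundary circle, the conditional probability of the Brownian excursion returning to cross the annulus again is bounded away from $1$ uniformly (a Harnack/Beurling-type estimate depending only on $R/r$). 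Summing the geometric tail over $k$ gives the finite exponential moment for $\beta$ small enough depending on $R/r$ and $\lambda$.

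Given the exponential moment bound, the conclusion is a standard large-deviations computation for a compound Poisson sum. Writing $X := \sum_{i=1}^N \cro_{A(r,R)}(l_i)$, its moment generating function is
\[
\E{e^{\beta X}} = \exp\!\left(M\big(\E{e^{\beta\,\cro_{A(r,R)}(l_1)}} - 1\big)\right) =: e^{\Lambda} < \infty,
\]
so by Markov's inequality $\prob{X \ge n} \le e^{\Lambda} e^{-\beta n} = O(q^n)$ with $q = e^{-\beta} < 1$, and $q = q(r,R,\lambda)$ since both $M$ and the admissible $\beta$ depend only on $r,R$ (through $R/r$ and scaling) and $\lambda$.

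The main obstacle is the geometric tail estimate $\mu_L(\{\cro_{A(r,R)}(l) \ge k\}) \lesssim c^k$: one must be careful that $\cro_{A(r,R)}(l)$ counts crossings of the unrooted loop trace as time-intervals, so the Brownian loop measure's time-reweighting and the disintegration $\mu^{\text{loop}}_\H = \int_0^\infty \frac{dt}{2t}\int \mu^t_{x\to x}\,d^2x$ interact with the crossing count. I would handle this by decomposing a loop according to its excursions between $\partial B_r$ and $\partial B_R$ and iterating the restriction/Markov property, controlling each return probability by a uniform constant strictly less than one; this is essentially the argument behind \cite[Lemma~13]{lawler-werner} refined to track the number of traversals rather than merely finiteness of the mass.
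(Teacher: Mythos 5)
Your proposal is correct and follows essentially the same route as the paper: restrict to the finite-mass set $L(r,R)$, establish a geometric tail for $\cro_{A(r,R)}(l)$ under the normalized loop law via the strong Markov property (your ``returning excursion bounded away from $1$'' step is exactly the paper's $p^{n/2-1}$ bound after rooting the loop in $A(R,2R)$ and iterating), then convert this to the Poisson sum via the compound-Poisson exponential moment --- your MGF identity is just Campbell's second theorem, which the paper invokes --- and finish by Markov's inequality. The only place you are slightly less explicit than the paper is in making the single-loop Markov iteration precise against the unrooted/time-reweighted disintegration of $\lm$, but you correctly flag that obstacle and propose the same resolution (fix a root, condition on excursion structure), so the argument is sound.
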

\begin{proof}
Denote by $\mu^\#_L$ the normalized probability measure on $L(r,R)$ on the trace of an unrooted loop.
For the sake of tracing the loop, we can assume that it takes root inside the annulus $A(R,2R)$ almost surely .

\begin{figure}[ht]
\centering
	\includegraphics[width=0.6\textwidth]{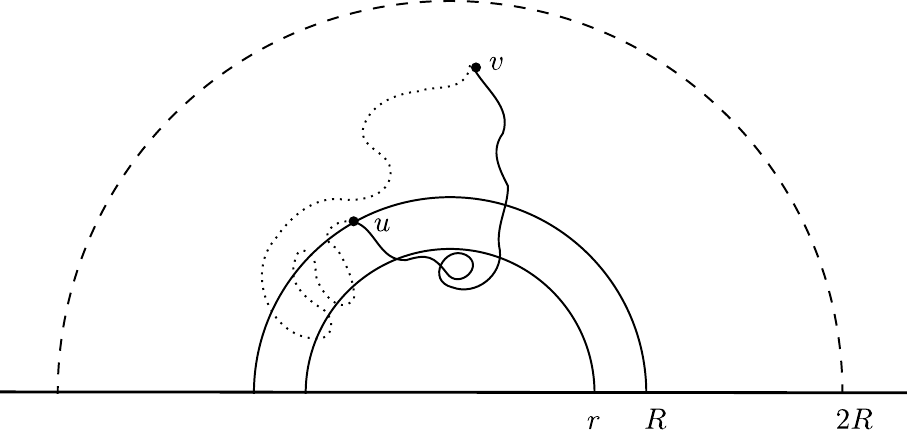}
\caption{Conditioned on the solid line from the root $v$ to some point $u$ on $\partial B_R$, we study the remaining (dotted) path in \eqref{eq:n-2}. In particular, since there are already $2$ crossings on the solid path, we need $n-2$ crossings for the dotted path.}
\label{pic:n-2}
\end{figure}

Conditioned on the trajectory before first returning to $\partial B_R=\{z:|z|= R\}$ after hitting $\partial B_r=\{z:|z|=r\}$, the remaining part is an independent Brownian motion on $\H$ from the landing point on $\partial B_R$ conditioned on coming back to the root, see Figure \ref{pic:n-2}.
Applying  the strong Markov property recursively, we have
\begin{equation}
\label{eq:n-2}
\begin{aligned}
	\mathbb{P}_{\mu^\#_L}[\cro_{A(r,R)}(l)\ge n] \le& \sup_{\substack{u\in \partial B_R\\ v\in A(R,2R) }}\mathbb{P}_{u\rightarrow v}[
	W\text{ crosses }A(r,R)
	\text{ at least }
	n-2 \text{ times }]\\
	\le & \left(\sup_{\substack{u\in \partial B_R\\ v\in A(R,2R)}}\mathbb{P}_{u\rightarrow v}[W\text{ hits }\partial B_r \text{ before returning to } v]\right)^{\lceil\frac{n}{2}-1\rceil}\\
	\le & p^{\frac{n}{2}-1},
\end{aligned}
\end{equation}
where $\mathbb{P}_{u\rightarrow v}$ denotes the normalized (Brownian) interior to interior measure on $\H$ from $u$ to $v$, $W$ is the trajectory under $\mathbb{P}_{u\rightarrow v}$ and
  \[
  p:=\sup_{\substack{u\in \partial B_R\\ v\in A(R,2R)}}\mathbb{P}_{u\rightarrow v}[W\text{ hits }\partial B_r \text{ before returning to } v]<1.
  \]
Then Campbell's second theorem tells that for any $\epsilon>0$,  \begin{align*}
 	&\mathbb{E}\left[\exp\left(-\left(\frac{1}{2}\log p+\epsilon\right)\cdot\sum_{l\in \bls(\H)}\cro_{A(r,R)}(l)\right)\right]\\
 	=&\mathbb{E}\left[\exp\left(-\left(\frac{1}{2}\log p+\epsilon\right)\cdot\sum_{l\in \bls(\H)\cap L(r,R)}\cro_{A(r,R)}(l)\right)\right]\\
 =& \exp\left( -\int_{L(r,R)}\left[1-\exp\left(-\left(\frac{1}{2}\log p+\epsilon\right)\cdot\cro_{A(r,R)}(l)\right)\right]d\mu(l) \right)\\
 \le &\exp\left(|\mu_L|\cdot\mathbb{E}_{\mu_L^\#}\left[\exp\left(-\left(\frac{1}{2}\log p+\epsilon\right)\cdot\cro_{A(r,R)}(l)\right)\right] \right)\\
 \le & p^{-1}\cdot\exp\left(|\mu_L|/(1-e^{-\epsilon}) \right).
\end{align*}
This implies that
\[
\mathbb{P}\left[\sum_{l\in\bls(\H)}\cro_{A(r,R)}(l)\ge n\right]=\exp\left(|\mu_L|/(1-e^{-\epsilon}) \right)p^{\frac{n}{2}-1}e^{n\epsilon}.
\]
Then Lemma \ref{lm:loop_crossing} follows by taking $\epsilon$ sufficiently small such that $q=\sqrt{p}e^\epsilon<1$.
\end{proof}

Further, we show in the next lemma that, the probability on the total crossings of single loops in $\bls(\Omega)$ also has super-exponential decay. Notice that $\cro_{A(r,R)}(\bls(\Omega))\ne\cro_{A(r,R)}(\cle(\Omega))$, because for $\cro_{A(r,R)}(\bls(\Omega))$ we only count crossings formed by single loops, not clusters.

\begin{proposition}\label{prop:loop_crossing}
Let $\bls(\Omega)$ be a Brownian loop soup with intensity $\lambda\in(0,1]$ inside a simply connected subdomain $\Omega\subseteq\H$, then
\[
\sup_{\Omega\subseteq\H}{\mathbb{P}[\cro_{A(r,R)}(\bls(\Omega))\ge n]} \text{ decays super-exponentially.}
\]
\end{proposition}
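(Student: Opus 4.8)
The plan is to leverage Lemma~\ref{lm:loop_crossing} together with the restriction property of the Brownian loop measure to pass from $\H$ to an arbitrary simply connected $\Omega \subseteq \H$. The key observation is that $\cro_{A(r,R)}(\bls(\Omega))$ only counts crossings realized by \emph{single} loops in $\bls(\Omega)$, so for each such loop $l$ we have $\cro_{A(r,R)}(l) \ge 1$ and the total crossing number is bounded above by $\sum_{l \in \bls(\Omega)} \cro_{A(r,R)}(l)$, where here (abusing notation as in the text) $\cro_{A(r,R)}(l)$ denotes the maximal number of non-overlapping time intervals whose $l$-image crosses $A(r,R)$. Thus it suffices to bound $\mathbb{P}\big[\sum_{l \in \bls(\Omega)} \cro_{A(r,R)}(l) \ge n\big]$ uniformly in $\Omega$.

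First I would note that, by the restriction property of the Brownian loop measure ($\mu^{\text{loop}}_\Omega$ is the restriction of $\mu^{\text{loop}}_\H$ to loops staying in $\Omega$), a sample of $\bls(\Omega)$ can be coupled as the sub-collection of loops of a sample of $\bls(\H)$ that happen to stay inside $\Omega$. Only loops that cross $A(r,R)$ contribute to $\cro_{A(r,R)}$, and such loops have diameter at least $R-r$; the mass $|\mu_L|$ of the set $L(r,R)$ of loops in $\H$ crossing $A(r,R)$ is finite. Under this coupling, every loop $l \in \bls(\Omega)$ with $\cro_{A(r,R)}(l) > 0$ is also a loop of $\bls(\H)$ lying in $L(r,R)$, whence
\[
\sum_{l \in \bls(\Omega)} \cro_{A(r,R)}(l) \;\le\; \sum_{l \in \bls(\H)} \cro_{A(r,R)}(l)
\]
pointwise on the coupling space. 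Consequently
\[
\sup_{\Omega \subseteq \H} \mathbb{P}\big[\cro_{A(r,R)}(\bls(\Omega)) \ge n\big]
\;\le\; \mathbb{P}\Big[\sum_{l \in \bls(\H)} \cro_{A(r,R)}(l) \ge n\Big],
\]
and the right-hand side is $O(q^n)$ by Lemma~\ref{lm:loop_crossing}. This already gives exponential decay uniformly in $\Omega$; to upgrade to \emph{super}-exponential decay one refines the argument: condition on $N := \#\{l \in \bls(\H) : l \text{ crosses } A(r,R)\}$, which is Poisson with mean $\lambda |\mu_L|$ and hence has a super-exponential tail, and on $\{N = k\}$ use the exponential tail $p^{n/2 - 1}$ from \eqref{eq:n-2} for each of the $k$ i.i.d.\ loops (distributed as $\mu_L^\#$) to control $\mathbb{P}\big[\sum_{i=1}^k \cro_{A(r,R)}(l_i) \ge n \mid N = k\big] \le k \, p^{n/(2k) - 1}$ or, more efficiently, run the exponential-moment (Campbell) computation exactly as in the proof of Lemma~\ref{lm:loop_crossing} but now noting that for $\bls(\Omega)$ the intensity measure is $\lambda \mu^{\text{loop}}_\Omega \le \lambda \mu^{\text{loop}}_\H$, so the same bound holds with a constant independent of $\Omega$; then optimizing the free parameter $\epsilon$ as a function of $n$ (taking $\epsilon = \epsilon_n \to 0$ slowly) converts $e^{n\epsilon} p^{n/2}$ into a super-exponentially small quantity.

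The main obstacle is purely bookkeeping: making the coupling between $\bls(\Omega)$ and $\bls(\H)$ precise and checking that the crossing-count monotonicity $\cro_{A(r,R)}(\bls(\Omega)) \le \sum_{l \in \bls(\H)} \cro_{A(r,R)}(l)$ genuinely holds under it (rather than only in distribution), and then verifying that the optimization over $\epsilon_n$ indeed yields a rate decaying faster than every geometric sequence $s^n$. Neither step is deep, but the uniformity in $\Omega$ must be carried carefully through the Campbell-formula estimate, using at each stage that $\mu^{\text{loop}}_\Omega(L(r,R)) \le \mu^{\text{loop}}_\H(L(r,R)) = |\mu_L| < \infty$.
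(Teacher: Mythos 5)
Your reduction to $\Omega=\H$ via the restriction property (or monotonicity \eqref{eq:property_cross}) is correct and matches the paper's opening line, and the inequality $\cro_{A(r,R)}(\bls(\Omega)) \le \sum_{l\in\bls(\H)}\cro_{A(r,R)}(l)$ is indeed valid. However, there is a fatal gap in the final step: the random variable $\sum_{l\in\bls(\H)}\cro_{A(r,R)}(l)$ has \emph{only} an exponential tail, not a super-exponential one, and no optimization of $\epsilon$ can fix this. Concretely, a single Brownian loop in $L(r,R)$ produces $n$ crossings (with overlaps allowed) with probability of order $p^{n/2}$ — this is essentially a geometric random variable, as the recursion in \eqref{eq:n-2} shows — so already $\mathbb{P}[\sum_l \cro_{A(r,R)}(l)\ge n] \ge c\,p^{n/2}$ for some $c>0$. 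The quantity $e^{n\epsilon}p^{n/2-1}\exp(|\mu_L|/(1-e^{-\epsilon}))$ is bounded below by $p^{n/2-1}$ for every $\epsilon\ge 0$; choosing $\epsilon=\epsilon_n\to 0$ only worsens the multiplicative constant $\exp(|\mu_L|/(1-e^{-\epsilon_n}))\to\infty$. "Super-exponential" means $O(s^n)$ for \emph{every} $s>0$, so the fixed base $\sqrt p$ is a hard floor.

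The idea you are missing is the one that makes Proposition \ref{prop:loop_crossing} genuinely stronger than Lemma \ref{lm:loop_crossing}: $\cro_{A(r,R)}(\bls(\Omega))$ counts \emph{disjoint} crossing arcs, not crossings with multiplicity. The paper exploits this by decomposing each loop into crossing excursions via Itô excursion theory, then writing
\[
\mathbb{P}[\cro_{A(r,R)}(\bls(\H))\ge n] \;\le\; \sum_{k\ge n}\mathbb{P}\Big[\sum_{l}\cro_{A(r,R)}(l)=k\Big]\binom{k}{n}\,u_n(r,R),
\]
where $u_n(r,R)$ is the supremum probability that $n$ independent Brownian excursions across $A(r,R)\cap\H$ are pairwise disjoint. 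The exponential tail from Lemma \ref{lm:loop_crossing} kills the $\binom{k}{n}$ sum (giving another exponential factor), and the super-exponential decay comes entirely from $u_n(r,R)$: by Fomin's determinantal identity and a Hadamard-type estimate one gets $u_n(r,R)\le C^{n-1}(n!)^{-1/2}$. That factorial is what cannot be recovered from any exponential-moment (Campbell) computation alone. Your proposal, as written, would at best reprove the exponential decay already noted in Remark \ref{rmk:thm_main}.
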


\begin{proof}
By monotonicity of the crossing number \eqref{eq:property_cross},
it suffices to show that
\[
{\mathbb{P}[\cro_{A(r,R)}(\bls(\H))\ge n]} \text{ decays super-exponentially.}
\]

We can decompose the traces of each loop in $L(r,R)$ into pieces of crossings (from $\partial B_r$ to $\partial B_R$ or from $\partial B_R$ to $\partial B_r$) and Brownian excursions connecting consecutive crossings by Ito’s excursion theory \cite{pitman-yor}. For each crossing, conditioned on its starting point and end point, it is distributed according to the normalized Brownian excursion measure in $A(r,R)$ independent of other parts of the loop. For the purpose of estimating $\cro_{A(r,R)}(\bls(\H))$, by summing over the number of crossings (not necessarily disjoint) of loops in the Brownian loop soup the upper bounds in \ref{lm:loop_crossing} and then selecting $n$ disjoint crossings out of them, we have
\begin{equation}\label{eq:uon}
\begin{aligned}
	\mathbb{P}[\cro_{A(r,R)}(\bls(\H))\ge n] &\le \sum_{k\ge n}\mathbb{P}[\sum_{l\in\bls(\H)}\cro_{A(r,R)}(l)=k]\cdot \binom{k}{n}\cdot u_n(r,R)\\
	&\le C\cdot u_n(r,R)\cdot\sum_{k\ge n}q^k\cdot \binom{k}{n},
\end{aligned}
\end{equation}
where $u_n(r,R):=\sup\limits_{\substack{x_1,\ldots,x_n\in \partial B_r\\y_1,\ldots,y_n\in \partial B_R}}\mathbb{P}[\text{Brownian excursions from }x_1,\ldots,x_n \text{ to }y_1,\ldots,y_n$ inside $A(r,R)$ are disjoint$]$ and by Lemma \ref{lm:loop_crossing}, there exists $C>0$ and $q<1$ such that 
\[
\mathbb{P}\left[\sum_{l\in\bls(\H)}\cro_{A(r,R)}(l)=k\right]\le C\cdot q^k.
\]
We first look at the factor  $v_n:=\sum_{k=n}^{\infty}q^k\cdot\binom{k}{n}$ in \eqref{eq:uon}. In fact,
\begin{align*}
	(1-q)v_n &= \sum_{k=n}^{\infty} q^k\cdot\binom{k}{n}-\sum_{k=n}^{\infty} q^{k+1}\cdot \binom{k}{n}=q^n + \sum_{k=n+1}^{\infty}q^k\left(\binom{k}{n}-\binom{k-1}{n}\right)
	=qv_{n-1},
\end{align*}
i.e. $v_n$ grows exponentially with exponent $\frac{q}{1-q}$. Therefore to prove the desired super-exponential decay for \eqref{eq:uon}, it suffices to prove that $u_n(r,R)$ decays super-exponentially. To this end,	one can apply the Fomin's identity (for example, see \cite{kozdron-lawler}) for the non-intersection probability of a random walk excursion and loop-erased random walks (which is obviously larger than the non-intersection probability of random walk excursions). By conformal invariance of the Brownian excursion, we choose a conformal map $\varphi:A(r,R)\cap\H \to\mathbb D$
such that $\varphi(\partial B_R)=\{e^{i\theta}:\theta\in]-\theta_1,\theta_1[\}$ and $\varphi(\partial B_r)=\{e^{i\theta}:\theta\in]-\theta_2+\pi,\theta_2+\pi[\}$ for some $\theta_1+\theta_2<\pi$. Then
    \begin{align*}
    u_n(r,R)
    &\le \sup_{\substack{1\le k\le n, x_k\in ]-\theta_1,\theta_1[,\\ y_k\in ]-\theta_2+\pi,\theta_2+\pi[}}\det\left[\frac{1-\cos(x_j-y_j)}{1-\cos(x_j-y_l)} \right]_{\substack{1\le j,l \le n}}\\
    & \le 2\sup_{\substack{1\le k\le n, x_k\in ]-\theta_1,\theta_1[,\\ y_k\in ]-\theta_2+\pi,\theta_2+\pi[}}\det\left[\frac{1}{1-\cos(x_j-y_l)} \right]_{\substack{1\le j,l\le n}}.
    \end{align*} 
Among the choice $x_j$, $1\le j\le n$, there exist a pair of indices $i_1\neq i_2$ such that $|x_{i_1}-x_{i_2}|\le \frac{2\pi}{n}$. By subtracting the $i_1$-th row from the $i_2$-th row,  the $i_2$-th row is the vector 
    \[
    \left[\frac{\cos(x_{i_2}-y_l)-\cos(x_{i_1}-y_l)}{(1-\cos(x_{i_1}-y_l))(1-
    \cos(x_{i_2}-y_l))}\right]_{1\le l\le n},
    \]
    whose modulus ($L^2$-norm) is less than
    $\frac{2\pi}{\sqrt{n}(1-\cos(\pi-\theta_1-\theta_2))^2}.$
    By performing the same procedure on the remaining $n-1$ rows, we have
    \[
    u_n(r,R) \le \left(\frac{4\pi}{(1+\cos(\theta_1+\theta_2))^2}\right)^{n-1}\cdot(n!)^{-\frac{1}{2}},
    \]
    which implies that $u_n(r,R)$ decays super-exponentially fast. The conclusion then follows by \eqref{eq:uon}.
\end{proof}

\begin{proof}[Proof of Theorem \ref{thm:main}]
By \eqref{eq:cro-comp} and \eqref{eq:CLEtoBLS}, it suffices to show that for all $s\in(0,1)$,
\begin{equation*}
\sup_{\Omega\subseteq\H}\mathbb P\left[\com_{A(r,R)}(\mathcal B(\Omega))\ge n\right]=O(s^n).
\end{equation*}

Introduce $a:=(R-r)/8$ to divide the Brownian loop soup into two parts according to their diameters, then by Lemma \ref{lm:decomposition},
\begin{align*}
\com_{A(r,R)}(\bls(\Omega))\le  &\com_{A(r+a,R-a)}(\bls_{<a}(\Omega))+\#\{l\in\bls_{\ge a}(\Omega):l\subset A(r,R))\} \notag \\
    &+\cro_{A(r,r+a)}(\bls_{\ge a}(\Omega))+\cro_{A(R-a,R)}(\bls_{\ge a}(\Omega)).
\end{align*}
Besides, Lemma \ref{lm:CompToCluster} implies that
\begin{equation*}
\begin{aligned}
\com_{A(r+a,R-a)}(\bls_{<a}(\Omega))&\le \clu_{A(r+2a,R-2a)}(\bls_{<a}(A(r+a,R-a)\cap\Omega)).
\end{aligned}
\end{equation*}
Then the conclusion follows by combining Proposition \ref{prop:clusterMajor}, Proposition \ref{prop:loop_crossing} and the Poisson tail of $\#\{l\in\bls_{\ge a}(\H):l\subset A(r,R))\}$.
\end{proof}

\section{Proof of {Corollary \ref{cor:main2}}{}}\label{sec:general_ann}
In this section, we prove Corollary \ref{cor:main2}, which generalizes Theorem \ref{thm:main} to the crossing estimates of arbitrary quads, with the same spirit as in \cite{kemppainen-smirnov}. By conformal invariance of CLEs, without loss of generality, we will assume in the whole section that $\Omega=\H$. First, let us extend the crossing estimated in Theorem \ref{thm:main} to hold for inner annuli uniformly on their mudulus.

\begin{lemma}\label{lm:cro_annuli}
\label{lm:cro(H)}
Given a non-nested simple $\cle_\kappa(\H)$, $\kappa\in(\frac{8}{3},4]$, we have that for all $s\in(0,1)$, $z_0\in\mathbb C$ and $0<r<R$,
\[
\mathbb P\left[\cro_{A_{z_0}(r,R)}(\cle_\kappa(\H))\ge n\right] = O(s^n)
\]
where the constant in $O(s^n)$ depends on $\kappa$ and $R/r$.
\end{lemma}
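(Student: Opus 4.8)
The plan is to reduce the case of an inner annulus $A_{z_0}(r,R)$ with $z_0\in\mathbb C$ arbitrary and $0<r<R$ to the half-plane annulus estimate already established in Theorem \ref{thm:main}. The key point is that the bound in Theorem \ref{thm:main} is uniform over all simply connected subdomains $\Omega\subseteq\H$, so it suffices to produce, for each $A_{z_0}(r,R)$, a conformal map $\varphi$ sending $\H$ to some simply connected $\Omega\subseteq\H$ under which $A_{z_0}(r,R)$ is mapped to (a region containing) an annulus $A(\tilde r,\tilde R)\subseteq\H$ whose modulus $\tilde R/\tilde r$ is controlled in terms of $R/r$ alone. Then, by conformal invariance of $\cle_\kappa$, $\varphi(\cle_\kappa(\H))$ is a $\cle_\kappa(\Omega)$, and monotonicity of the crossing number under inclusion of annuli (the last displayed monotonicity relation in Section \ref{sec:def_number}) gives
\[
\cro_{A_{z_0}(r,R)}(\cle_\kappa(\H))=\cro_{\varphi(A_{z_0}(r,R))}(\cle_\kappa(\Omega))\le \cro_{A(\tilde r,\tilde R)}(\cle_\kappa(\Omega)),
\]
and Theorem \ref{thm:main} bounds the right-hand side by $O(s^n)$ with a constant depending only on $\kappa$, $s$ and $\tilde R/\tilde r$, hence only on $\kappa$, $s$ and $R/r$.

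First I would reduce to the case where $A_{z_0}(r,R)$ lies in $\H$, at least partially: if $B_R(z_0)\cap\H=\emptyset$ then the crossing number is trivially zero, so we may assume $B_R(z_0)\cap\H\neq\emptyset$; translating horizontally (which preserves $\H$ and $\cle_\kappa(\H)$ in law) we may put $\Re z_0=0$, so $z_0=i y_0$ for some $y_0\in\mathbb R$. After scaling by $1/r$ (again a symmetry of $\H$ and of $\cle_\kappa$ up to rescaling the annulus, which only changes $r,R$ to $1,R/r$) we may assume $r=1$. Now there are two regimes. If $z_0$ is ``deep'' in $\H$, say $\dist(z_0,\partial\H)=|y_0|\ge 2R$ — equivalently $B_R(z_0)$ is comfortably inside $\H$ — then I can simply take $\Omega=\H$ and $\varphi=\mathrm{id}$: but $A_{z_0}(1,R)\subseteq\H$ need not be of the form $A(\tilde r,\tilde R)$ centered at the origin. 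Here I would instead invoke the remark after Corollary \ref{cor:main2} that the proof also handles annuli contained inside the domain, i.e. directly re-run the argument of Theorem \ref{thm:main} (the Brownian loop-soup decomposition, Proposition \ref{prop:clusterMajor}, Proposition \ref{prop:loop_crossing}) with $A(r,R)$ replaced by $A_{z_0}(r,R)$; all the estimates used there — the finiteness of the Brownian loop measure of loops crossing a fixed annulus, Lemma \ref{lm:narrow_crossing}, Fomin's identity — are insensitive to whether the annulus touches $\partial\Omega$ or not, and in fact become easier when it does not. In the ``shallow'' regime where $B_R(z_0)$ meets $\partial\H$, the annulus $A_{z_0}(r,R)\cap\H$ is comparable to a half-plane annulus of the same modulus, and Theorem \ref{thm:main} applies after at most a bounded conformal distortion.

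Concretely, to handle the shallow regime uniformly I would fix a conformal map $\psi$ from $\H$ onto $\H$ (a Möbius transformation) sending $z_0$ to $i$; since $z_0=iy_0$ with $0<y_0<2R$ (the remaining case), $\psi$ can be taken to be $z\mapsto z/y_0$ if $y_0>0$, and then the image of $A_{z_0}(r,R)$ is $A_i(r/y_0,R/y_0)$, an annulus centered at a point at fixed height $1$ with modulus $R/r$ unchanged. This reduces everything to a compact family of annuli $\{A_i(\rho,\rho R/r): \rho>0\}$ — and after a further scaling we may assume, say, $\rho=1$, so that we only need the estimate for the single annulus $A_i(1,R/r)$, now genuinely uniform in nothing but $R/r$. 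For this single annulus one can either quote the inner-annulus version of Theorem \ref{thm:main} mentioned in the remark, or observe that $A_i(1,R/r)\cap\H$ contains a half-plane annulus $A(\tilde r,\tilde R)$ with $\tilde R/\tilde r$ bounded below in terms of $R/r$, and apply Theorem \ref{thm:main} together with the annulus-monotonicity of $\cro$.

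The main obstacle is the uniformity: one must make sure that, as $z_0$ ranges over all of $\mathbb C$ and $r<R$ over all scales, the modulus $\tilde R/\tilde r$ of the comparison annulus stays bounded below by a function of $R/r$ only. The translation, scaling, and Möbius reductions above collapse the parameter space to a single annulus, which resolves this; the only genuinely analytic input beyond Theorem \ref{thm:main} is the elementary fact that $A_{z_0}(r,R)\cap\H$, when nonempty, contains a half-plane annulus (or a crossing-quad) of modulus bounded below in terms of $R/r$, which is a routine planar-geometry estimate I would not belabor.
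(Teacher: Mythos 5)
Your high-level plan — use the conformal symmetries of $\H$ plus monotonicity of $\cro$ to reduce to the origin-centered case of Theorem \ref{thm:main} — is in the right spirit and is what the paper ultimately does, but the proposal has several gaps that prevent it from closing.

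The most serious is the normalization in the shallow regime. After a horizontal translation and the dilation $z\mapsto z/y_0$, you correctly arrive at the family $A_i(\rho,\rho R/r)$ with $\rho>0$, but the claim that a ``further scaling'' allows one to take $\rho=1$ is false: a dilation by $1/\rho$ sends $i$ to $i/\rho$, so the center height and inner radius rescale together. The affine automorphisms of $\H$ that keep the center on the imaginary axis are exactly dilations, and these cannot simultaneously pin the center at $i$ and the inner radius at $1$; moreover $\{A_i(\rho,\rho R/r):\rho>0\}$ is not a compact family. The paper instead normalizes the \emph{outer} radius to $R=1$ so the shallow case becomes $y\in[0,2]$ with $r$ fixed, a genuinely compact parameter, and then covers the crossings by finitely many fixed annuli $A_{iy_j}(r,\tfrac{r+1}{2})$. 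Second, the ``elementary planar-geometry fact'' you invoke — that $A_{z_0}(r,R)\cap\H$ contains an origin-centered annulus $A(\tilde r,\tilde R)$ of modulus bounded below in terms of $R/r$ — is false: the containment $A(\tilde r,\tilde R)\subseteq A_{z_0}(r,R)$ forces $\tilde r\ge|z_0|+r$ and $\tilde R\le R-|z_0|$, which is consistent with $\tilde R>\tilde r$ only when $|z_0|<(R-r)/2$. Third, for the deep regime, invoking ``the remark after Corollary \ref{cor:main2}'' is circular — that remark is a forward pointer to this very lemma — and ``re-running the proof of Theorem \ref{thm:main} with an interior annulus'' is not free: the induction in Proposition \ref{prop:clusterMajor} squeezes the next crossing cluster against $\partial\H$ using the boundary-adjacent narrow quad of Lemma \ref{lm:narrow_crossing} (whose sides $(bc),(ad)$ lie on $\mathbb R$), and the rightmost-component ordering uses the fact that the sector $A^{(\eta)}$ is cut off by $\mathbb R$. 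For a fully interior annulus there is no boundary to squeeze against and the ordering can wrap around; one would need a rotating sector and a different narrow-crossing estimate, i.e. a substantially rewritten iteration, not the same one. The paper deliberately avoids re-proving anything: for $y>2$ it applies Lemma \ref{lm:decomposition} relative to the translated half-plane $\H+i(y-2)$, bounds the error terms by crossings of origin-centered annuli via Proposition \ref{prop:loop_crossing}, shifts the loop soup back down by $i(y-2)$, and thereby reduces to the compact regime $y\in[0,2]$, where the finite-cover argument above applies.
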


\begin{proof}
It readily follows from Theorem \ref{thm:main} that the result holds for $\Im z_0\le 0$. If $\Im z_0>0$, by the Brownian loop-soup construction of CLEs and the conformal invariance of Brownian loop soup on $\H$, it suffices to prove that for $\lambda=(3\kappa-8)(6-\kappa)/2\kappa$ and for all $y\ge 0$, $0<r<1$ and $s\in(0,1)$, 
\begin{equation}\label{eq:com_iy}
	\mathbb P\left[\com_{A_{iy}(r,1)}(\bls^\lambda(\H))\ge n\right]=O(s^n).
\end{equation}
For each $y>2$ and $\epsilon$ sufficiently small, it holds by Lemma \ref{lm:decomposition} that 
	\begin{align*}
		&\mathbb P\left[\com_{A_{iy}(r,1)}(\bls^\lambda(\H))\ge n\right]\\ \le
		&\mathbb P\left[\com_{A_{iy}\left(\frac{3r+1}{4},\frac{r+3}{4}\right)}(\bls^\lambda(\H+i(y-2)))\ge (1-2\epsilon)n\right]\\
		&+\mathbb{P}\left[\cro_{A_{iy}\left(r,\frac{3r+1}{4}\right)}\left(\bls^\lambda(\H+i(y-2))^\bot\right)\ge\epsilon n\right]\\
		 &+\mathbb{P}\left[\cro_{A_{iy}\left(\frac{r+3}{4},1\right)}\left(\bls^\lambda(\H+i(y-2))^\bot\right)\ge\epsilon n\right] \\
		\le &\mathbb P\left[\com_{A_{2i}\left(\frac{3r+1}{4},\frac{r+3}{4}\right)}(\bls^\lambda(\H)) \ge (1-2\epsilon)n\right]+O(s^n),
	\end{align*}
by shifting $\H+i(y-2)$ downwards by the distance $i(y-2)$, where the term $O(s^n)$ follows from Proposition \ref{prop:loop_crossing} because any crossing arc of $A_{iy}\left(\frac{r+3}{4},1\right)$ (or $A_{iy}\left(r,\frac{3r+1}{4}\right)$) must intersect both $\mathbb{R}+i(y-2)$ and $A_{iy}\left(\frac{r+3}{4},1\right)$, and these arcs are bound to cross one of the annuli in the left picture of Figure \ref{pic:covering}. Similarly, the probability  of the event $\{\com_{A_{2i}\left(\frac{3r+1}{4},\frac{r+3}{4}\right)}$ $(\bls^\lambda(\H))\ge n\}$
can be bounded by the probability of a union crossing events of annuli centered at the origin, see the left picture of Figure \ref{pic:covering}, which completes the proof of \eqref{eq:com_iy} for $y>2$.

\begin{figure}[ht]
\centering
	\includegraphics[width=0.4\textwidth]{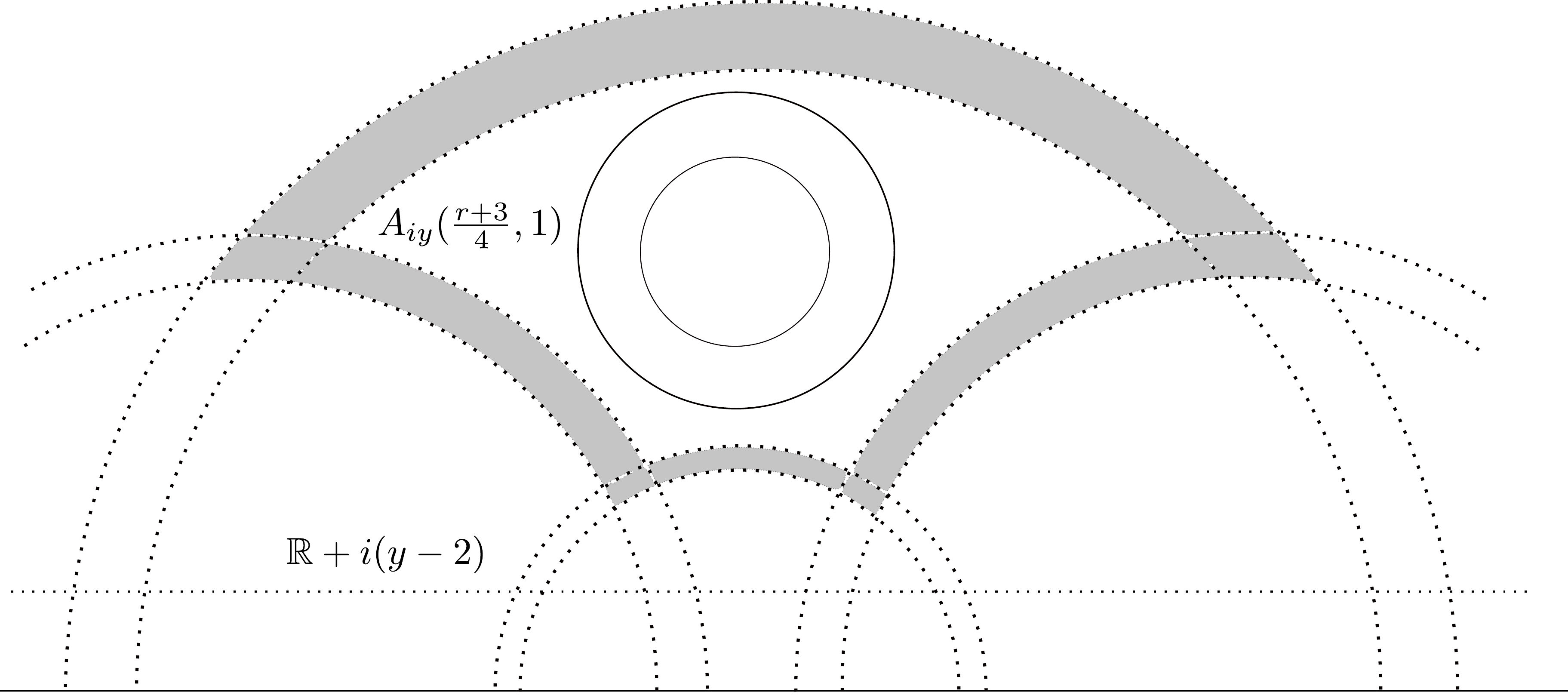}
	\includegraphics[width=0.4\textwidth]{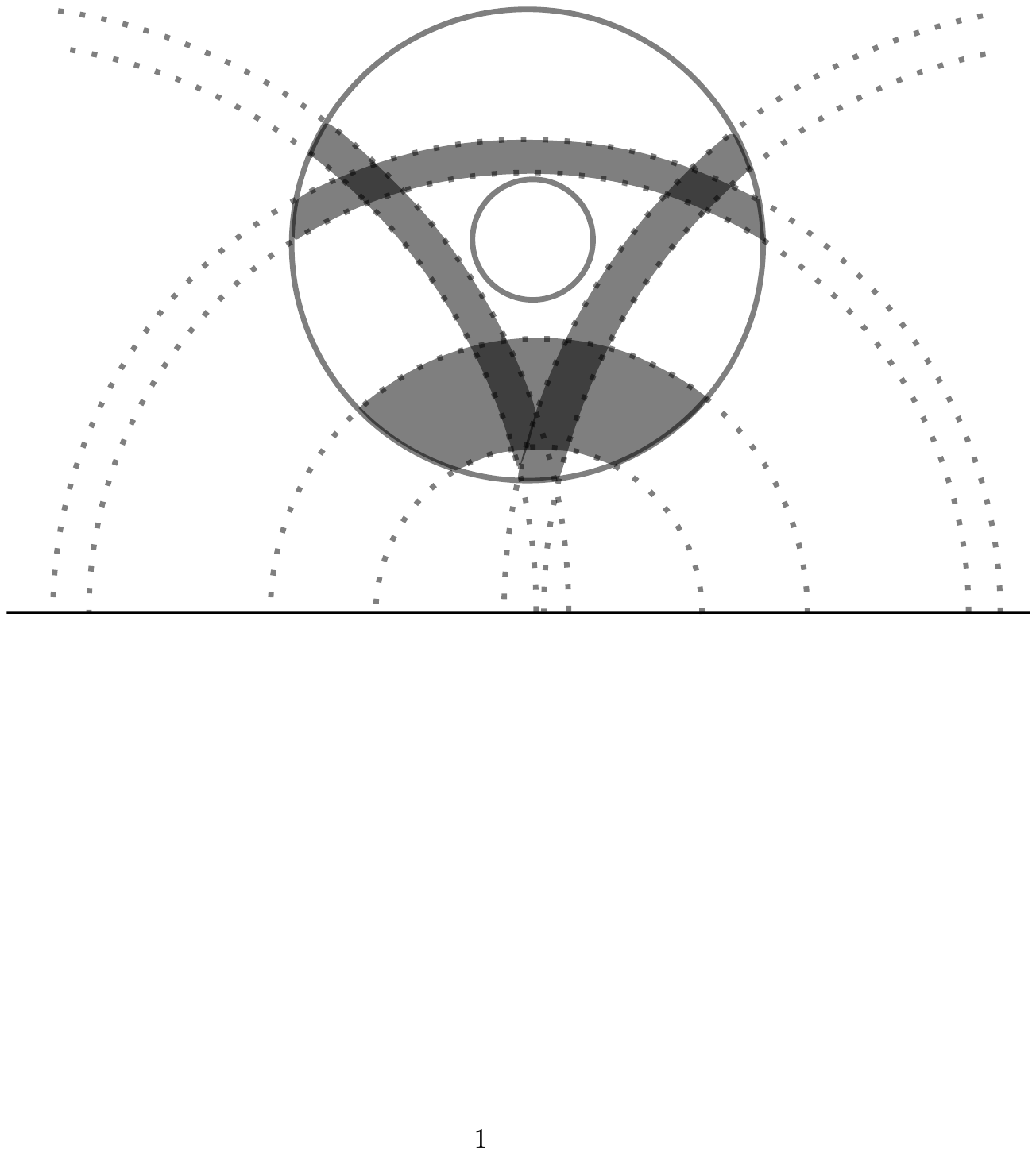}
\caption{Each crossing is bound to cross one of the shaded annulus sectors.}
\label{pic:covering}
\end{figure}

For $y\in[0,2]$, we are going to establish \eqref{eq:com_iy} uniformly in $y$ by finding a finite number of annuli $A_1,\ldots,A_k$ such that for any $A_{iy}(r,1)$, there exists at least one $A_j\subseteq A_{iy}(r,1)$, $j=1,\ldots,k$, therefore it is not hard to see that
\[
\mathbb P\left[\com_{A_{iy}(r,1)}(\bls^\lambda(\H))\ge n\right]\le\max_{1\le j\le k}\mathbb P\left[\com_{A_j}(\bls^\lambda(\H))\ge n\right]=O(s^n),\ y\in[0,2],
\] 
where the $O(s^n)$ term for $j=1,\ldots,k$ can be bounded similarly by the probability of a union of crossing events as illustrated in the right picture of Figure \ref{pic:covering}.
Effectively, if we choose $y_j:=(j-1)\cdot\frac{1-r}{2}$ for $j=1,\ldots,k$, where $k=\lceil\frac{4}{1-r}\rceil+1$, then	
	\[
	A_{iy_j}(r,\frac{r+1}{2}) \subseteq A_{iy}(r,1) \text{ for all } y\in[y_{j-1},y_j].
	\] 
This completes the proof of \eqref{eq:com_iy}.
\end{proof}

The proof of Corollary \ref{cor:main2} for generic quads $Q=(V;S_k,k=0,1,2,3)$ with $S_1, S_3\subset\mathbb{R}$ proceeds by connecting $S_1$ and $S_3$ by a chain of annuli of fixed radii ratio, for which the number of annuli needed depends only on $m(Q)$. To analyze $m(Q)$, we need the concept of the extremal length, which also gives the conformal modulus. Let $\Gamma$ be a family of locally rectifiable curves in an open set $D$ in the complex plane. If $\rho:D\rightarrow[0,\infty]$ is square-integrable on $D$, then define 
\[
A_\rho (D)  =  \iint_D \rho^2 (z)d^2z \ \ 
\textrm{ and } \  L _  \rho  ( \Gamma )  =  \inf _ {\gamma \in \Gamma }
 \int_\gamma  \rho(z) |dz|,
\]
where $d^2z$ denotes the Lebesgue measure on the complex plane and $|dz|$ denotes the Euclidean element of length. Then the extremal length of $\Gamma$ is defined by
\[
m(\Gamma) := \sup_{\rho \in P} \frac{L_\rho(\Gamma)^2}{A_\rho(D)}.
\]
From the definition it is clear that the extremal length satisfies a simple monotonicity property: if $\Gamma_1\subseteq \Gamma_2$, then $m(\Gamma_1)\ge m(\Gamma_2)$. Moreover, it also agrees with the conformal modulus $m(Q)$ we introduced in Section \ref{sec:1.2} as the unique number for which $Q$ can be conformally onto a rectangle $[0,1]\times[0,m(Q)]$ with $S_k$ mapped to the four sides of the rectangle and $S_0$ mapped to $[0,1]\times\{0\}$, i.e. (cf. eg.  \cite{ahlfors})
\[
m(\Gamma)=m(Q),
\]
where $\Gamma$ is the family of all curves joining $S_0$ and $S_2$ inside $Q=(V;S_k,k=0,1,2,3)$.

We begin with an estimate on the extremal length following \cite[pages 719-720]{kemppainen-smirnov}. 
\begin{lemma}\label{lem:extre_len}
	Suppose that $Q=(V;S_k,k=0,1,2,3)$ has conformal modulus $m(Q)\ge 36$. Then there exist $z_0\in\mathbb{C}$ and $r>0$ such that any curve connecting $S_0$ and $S_2$ inside $V$ must cross an annulus $A_{z_0}(r,2r)$.
\end{lemma}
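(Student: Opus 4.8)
The plan is to follow the purely geometric argument of Kemppainen and Smirnov \cite[pages 719--720]{kemppainen-smirnov}. As recorded above, the conformal modulus is an extremal length: writing $\Gamma$ for the family of curves joining $S_0$ to $S_2$ inside $V$, we have $m(\Gamma)=m(Q)\ge 36$. Three reductions are free of charge. First, the statement, the collection of round annuli, and $m(\Gamma)$ are invariant under translations and dilations of the plane, so I may rescale $Q$ however is convenient and restore scale at the end. Second, the round annulus $A_{z_0}(r,2r)$ to be produced is \emph{not} required to lie inside $V$ — only that every curve of $\Gamma$ meet both bounding circles — which is precisely what leaves enough room. Third, since $Q$ is a crossing-quad of $\H$ we have $S_1,S_3\subset\mathbb R$, so both $S_0$ and $S_2$ are arcs in $\overline\H$ with their two endpoints on $\mathbb R$.

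The key step is to convert the large modulus into a \emph{metric} separation of $S_0$ from $S_2$, and the cleanest route is to double across the real line. Reflecting $V$ in $\mathbb R$ and gluing along the interiors of $S_1$ and $S_3$ produces a doubly connected (ring) domain $\widehat V$ whose two boundary components are the Jordan curves $\widehat S_0:=S_0\cup\overline{S_0}$ and $\widehat S_2:=S_2\cup\overline{S_2}$, and the doubling relation for the modulus gives $\mathrm{mod}(\widehat V)=\tfrac12 m(Q)\ge 18$, so $\widehat V$ is conformally a round annulus of enormous ratio. Now I would invoke the classical Teichm\"uller-type modulus estimate (see e.g.\ \cite{ahlfors}): a ring domain with complementary continua $E_0$ (bounded) and $E_1$ (unbounded) has modulus bounded above by an increasing function of $\dist(E_0,E_1)/\diam(E_0)$; hence for $\widehat V$ this ratio must be as large as we wish once $\mathrm{mod}(\widehat V)\ge 18$. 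Let $E_0,E_1$ be the two complementary continua of $\widehat V$, let $z_0$ and $\rho$ be the centre and radius of a smallest closed disk containing $E_0$, and set $r:=2\rho$. Then $E_0\subset B_r(z_0)$, while from the separation just obtained $\dist(z_0,E_1)\ge \dist(E_0,E_1)-\rho>2r$, so $E_1\cap\overline{B_{2r}(z_0)}=\emptyset$. Since every $\gamma\in\Gamma$ has one endpoint on $S_0\subset\widehat S_0$ and the other on $S_2\subset\widehat S_2$, hence one endpoint in $E_0\subset B_r(z_0)$ and the other in $E_1\subset\mathbb C\setminus B_{2r}(z_0)$ (or vice versa), connectedness forces $\gamma$ to meet both $\partial B_r(z_0)$ and $\partial B_{2r}(z_0)$, i.e.\ to cross $A_{z_0}(r,2r)$; this is the annulus we want.

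The main obstacle is the bookkeeping that makes the hypothesis $m(Q)\ge 36$ do what is claimed, and the cases the doubling glosses over. One must (i) pin down the precise Teichm\"uller inequality and verify that $\mathrm{mod}\ge 18$ yields $\dist(E_0,E_1)\ge c\,\diam(E_0)$ with $c$ large enough to swallow the slack from replacing $E_0$ by a circumscribed disk (so that $\dist(z_0,E_1)>2r$ genuinely holds — in fact the thresholds are met with huge room, and $36$ is chosen generously for exactly this reason); and (ii) treat degenerate configurations, e.g.\ when $S_1$ or $S_3$ shrinks to a point, in which case $\widehat V$ fails to be a true ring and one should argue directly inside $Q$: slice it into $\lfloor m(Q)\rfloor$ unit sub-quads stacked in series along crosscuts $\alpha_0=S_0,\alpha_1,\dots,\alpha_{\lfloor m(Q)\rfloor}=S_2$, observe that each $\alpha_k$ separates $S_0$ from $S_2$, and run a diameter pigeonhole among the crosscuts to extract one of small circumradius together with a later one escaping the associated ball, then finish by the same connectedness argument. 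The whole proof is deterministic; Theorem~\ref{thm:main} and Lemma~\ref{lm:cro_annuli} enter only afterwards, in the deduction of Corollary~\ref{cor:main2}.
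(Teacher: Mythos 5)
Your proposal is correct in substance but takes a genuinely different route from the paper. The paper argues directly with extremal length and never invokes reflection or the Teichm\"uller modulus estimate: it sets $d_1$ to be the inner-Euclidean distance in $V$ from $S_1$ to $S_3$, picks a path $\gamma^*$ of length at most $2d_1$ joining them, and shows by testing the extremal length of the \emph{dual} family $\Gamma^*$ against the metric $\rho=\mathbf 1_{N_{d_1}(\gamma)}$ that every crossing $\gamma$ of $Q$ has diameter $d\ge(\sqrt{m(Q)}-2)\,d_1\ge 4d_1$; the annulus is then simply $A_{z_0}(d_1,2d_1)$ with $z_0$ the midpoint of $\gamma^*$, since each crossing meets $\gamma^*\subset B_{d_1}(z_0)$ (by the interlacing of endpoints on $\partial V$) and, having diameter $\ge 4d_1$, must also exit $B_{2d_1}(z_0)$. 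Your route instead doubles $V$ across $\mathbb R$ along $S_1\cup S_3$, passes to a ring domain of modulus $m(Q)/2\ge 18$, and appeals to the Teichm\"uller-type bound to separate the complementary continua $E_0,E_1$; this is sound (the arithmetic with the circumscribed disk works out with enormous slack, and $E_1$ is indeed the unbounded continuum), but it costs you the hypothesis $S_1,S_3\subset\mathbb R$, a classical comparison theorem for ring domains, and a separate treatment of the degenerate case in which $S_1$ or $S_3$ is a point so that the doubled domain fails to be a ring. Your fallback for that degenerate case (slicing into unit sub-quads and a pigeonhole on crosscuts) is only sketched and would need to be fleshed out to be convincing, whereas the paper's argument handles all quads uniformly with a single elementary computation. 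In short: both proofs prove the lemma, yours is conceptually natural but heavier and less general, the paper's is lighter and self-contained.
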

\begin{proof}
Let 
\[d_1=\inf\{\text{length}(\gamma):\gamma \text{ joining }S_1,S_3 \text{ inside }V\}
\]
be the distance between $S_1$ and $S_3$ in the inner Euclidean metric of $Q$, and let $\gamma^*$ be a curve of length $\le 2d_1$ joining $S_1$ and $S_3$ inside $V$. We are going to show that any crossing $\gamma$ (joining $S_0$ and $S_2$ inside $V$) of $Q$ has diameter $d\ge 4d_1$. Indeed, working with the extremal length of the dual family of curves
\[
\Gamma^*=\{\gamma^*:\gamma^* \text{ connects } S_1 \text{ and }S_3 \text{ inside }V\},
\]
 take a metric $\rho$ equal to $1$ in the $d_1$-neighborhood of $\gamma$ and zero outside the $d_1$-neighborhood of $\gamma$. Then its area integral is at most $(d+2d_1)^2$, and any $\gamma^*\in\Gamma^*$ has length at least $d_1$ since $\gamma\cap\gamma^*\neq\emptyset$ must run through the support of $\rho$ for a length of at least $d_1$. Therefore $1/m(Q) = m(\Gamma^*) \ge d^2_1/(d + 2d_1)^2$, hence
\[
d \ge (\sqrt{m(Q)}-2)d_1 \ge 4d_1. 
\]

Now if we take an annulus $A$ centered at the middle point of $\gamma^*$ with inner radius $d_1$
and outer radius $2d_1$, every crossing $\gamma$ of Q contains
a crossing of $A$ because $\gamma$ has to intersect $\gamma^*$, which is contained inside the inner circle of $A$, and $\gamma$ has to intersect the outer circle of $A$ if its diameter is larger than $4d_1$.
\end{proof}

{
\begin{proof}[Proof of Corollary \ref{cor:main2}]

Let us decompose the set of crossings curves (from $S_0$ to $
S_2$ or from $S_2$ to $S_0$ inside $V$) of the quad $Q=(V;S_k,k=0,1,2,3)$.
In fact, if we map conformally $Q$ onto a rectangle $[0,1]\times[0,m(Q)]$ by $\phi_Q$, we can choose $K>0$ large enough, which depends only on $m(Q)$, such that for any $0\le i,j \le K-1$, the set of curves $\Gamma_{i,j}$ connecting $[\frac{i}{K},\frac{i+1}{K}]\times\{0\}$ and $[\frac{j}{K},\frac{j+1}{K}]\times\{m(Q)\}$ 
inside $\Omega$ has extremal length larger than $36$.
Then by Lemma \ref{lem:extre_len}, any curve in $\phi_Q^{-1}(\Gamma_{i,j})$ has to cross an annulus $A_{z_{i,j}}(r_{i,j},2r_{i,j})$ for some $z_{i,j}\in\mathbb C$ and $r_{i,j}>0$. In other words, any curve crossing $Q$ has to cross one of the $K^2$ annuli $(A_{z_{i,j}}(r_{i,j},2r_{i,j}))_{0\le i,j\le K-1}$.

Therefore, our crossing event is included in the union of events \[\{\com_{A_{z_{i,j}}(r_{i,j},2r_{i,j})}(\cle_\kappa(\H))>n/K^2\},\]
and we can finish the proof {by Lemma \ref{lm:cro_annuli}}.
\end{proof}
}

\section{Proof of {Corollary \ref{cor:complexity}}{}}\label{sec:complexity}
Let us now illustrate why our result implies the assumption of \cite[Corollary 1.7]{basok-chelkak}. 
Let $\Omega$ be a planar simply-connected domain and $\lambda_1,\ldots,\lambda_N\in\Omega$ be a collection of pairwise distinct punctures in $\Omega$. Given a loop ensemble in $\Omega\setminus\{\lambda_1,\ldots,\lambda_N\}$, we delete all loops surrounding zero or one puncture, and consider the collection of homotopy classes of loops that surround at least two punctures, which is called a \emph{macroscopic lamination.}
We are interested in the \emph{complexity} $|\Gamma|_{\mathcal T_\Omega}$ of a macroscopic lamination for a fixed triangulation $\mathcal{T}_\Omega=(\{\lambda_1,\ldots,\lambda_N,\partial\Omega, \mathcal E,\mathcal F\})$ of $\Omega\setminus\{\lambda_1,\ldots,\lambda_N\}$ whose $N+1$ vertices are $\lambda_1,\ldots,\lambda_N$ and the boundary of $\Omega$. 
Roughly speaking, $|\Gamma|_{\mathcal T_\Omega}$ is the minimal possible (in the free homotopy class) number of intersections of loops in $\Gamma$ with the edges of $\mathcal{T}_\Omega$. We refer interested readers to \cite{basok-chelkak} for detailed discussions and pictures therein.
The definition of the complexity depends on the choice of the triangulation $\mathcal{T}_\Omega$, but for each two such choices, the complexities differ by no more than a multiplicative factor independent of $\Gamma$. 
For a fixed triangulation $\mathcal T_\Omega$ of $\Omega\setminus\{\lambda_1,\ldots,\lambda_N\}$, the laminations on $\Omega\setminus\{\lambda_1,\ldots,\lambda_N\}$ are parametrized by multi-indices $\textbf n=(n_e)\in\mathbb{N}^\mathcal{E}$ (satisfying certain conditions), where $n_e:=\#\{\Gamma\cap e\}$. Then the complexity $|\Gamma|_{\mathcal T_\Omega}$ (with respect to triangulation $\mathcal T_\Omega$) can be expressed as
 \[
 |\Gamma|_{\mathcal T_\Omega}=\min_{\Gamma':\ \Gamma' \text{ is homotopic to } \Gamma}\#\{\Gamma'\cap\mathcal T_\Omega\},
 \]
 where $\#\{\Gamma'\cap\mathcal T_\Omega\}$ denotes the number of intersections of all loops in $\Gamma'$ with edges of $\mathcal T_\Omega$.
 
 We can assume by the conformal invariance of CLEs that $\Omega=\H$ and $|\lambda_1|<|\lambda_2|<\ldots<|\lambda_N|$ up to a re-ordering of punctures. We choose a triangulation $\mathcal T_\H$ of $\H\setminus\{\lambda_1,\ldots,\lambda_N\}$ such that for any $i<j$, each edge of $\mathcal T_\H$ connecting $\lambda_i,\ \lambda_j$  is a path between $\lambda_i$ and $\lambda_j$ inside $A(|\lambda_i|,|\lambda_j|)$, and any edge between a puncture $\lambda_i$ and $\partial\Omega$ is an arc of $\partial B_{\lambda_i}$. It is not hard to see that the complexity of any macroscopic lamination is bounded by the sum of crossings up to a multiplicative constant.  
 
\begin{figure}[ht]
\centering
\includegraphics[width=0.5\textwidth]{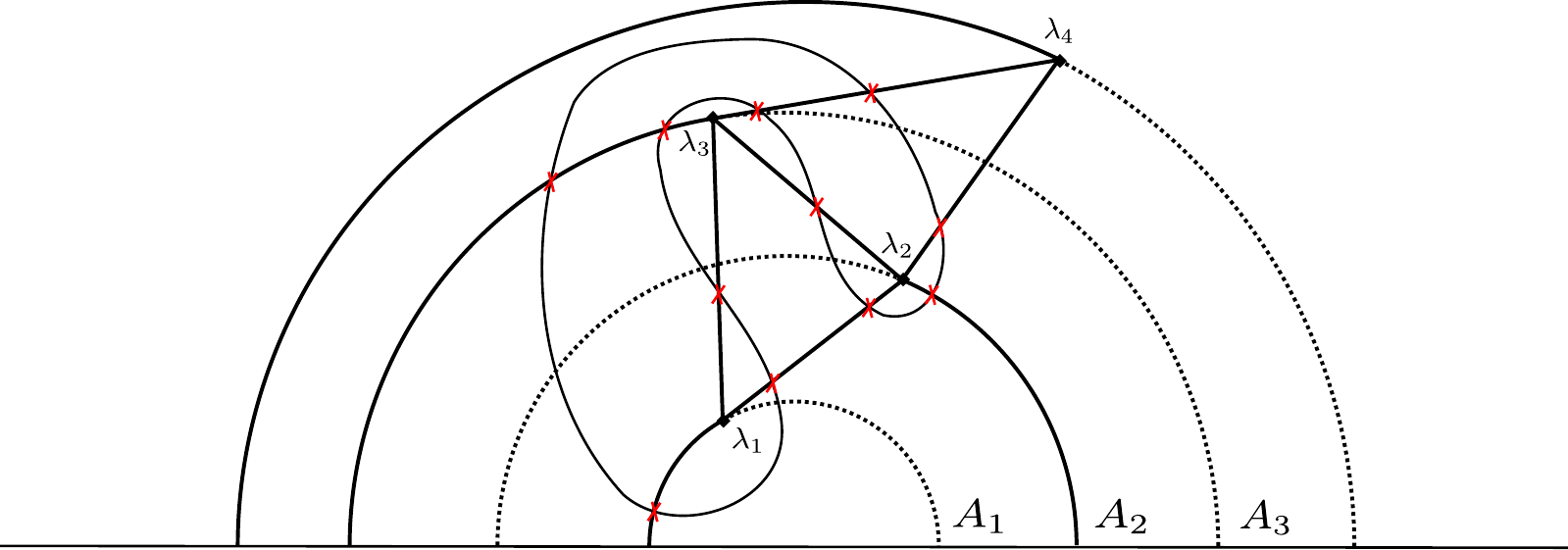}
\caption{
An illustration of the triangulation we adopt and the complexity of a loop. Note that the cross (intersection with segment $(\lambda_3\lambda_4)$) inside $A_3$ corresponds to a crossing of $A_2$ in the proof of Lemma \ref{lm:complexity}}
\end{figure}
 
\begin{lemma}\label{lm:complexity}
Let $\Omega'$ be a simply connected subdomain of $\H$. For each macroscopic lamination $\Gamma$ in $\Omega'$, we have
\[
|\Gamma|_{\mathcal{T}_{\H}}\le 6(N-1)\sum_{i=1}^{N-1}\cro_{A_i}(\Gamma).
\]
\end{lemma}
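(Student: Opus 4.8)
The plan is to bound the homotopy invariant $|\Gamma|_{\mathcal T_\H}$ by the crossing numbers of the actual realization of $\Gamma$, exploiting that $\mathcal T_\H$ is "radial" (its chords run monotonically through consecutive annuli $A_i$, and its edges to $\partial\Omega$ lie on the circles $\partial B_{|\lambda_i|}$) and that all loops stay inside $\H$.

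First I would reduce to a taut representative. The quantity $|\Gamma|_{\mathcal T_\H}$ depends only on the free homotopy classes of the loops of $\Gamma$ in $\H\setminus\{\lambda_1,\dots,\lambda_N\}$ (using conformal invariance I put the ambient domain $\Omega'\subseteq\H$ aside and work with the classes in $\H\setminus\{\lambda_i\}$; we may also assume $N\ge 3$, since for $N\le 2$ every loop surrounding $\ge 2$ punctures is boundary-parallel and the statement is vacuous). So I fix a representative $\widetilde\Gamma$ of $\Gamma$ in minimal position simultaneously with respect to the $1$-skeleton of $\mathcal T_\H$ and to all the circles $\partial B_{|\lambda_i|}$ — such a representative exists by standard normal-form/bigon-removal arguments — so that on the one hand $|\Gamma|_{\mathcal T_\H}=\sum_{e\in\mathcal T_\H}\#(\widetilde\Gamma\cap e)$, and on the other hand $\cro_{A_i}(\widetilde\Gamma)\le\cro_{A_i}(\Gamma)$ for every $i=1,\dots,N-1$ (a taut representative makes no superfluous radial excursions, hence minimizes the number of essential strands through each annulus). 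It therefore suffices to prove $\sum_e\#(\widetilde\Gamma\cap e)\le 6(N-1)\sum_i\cro_{A_i}(\widetilde\Gamma)$.

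Next I would use that $\mathcal T_\H$ has at most $3(N-1)$ edges (compactifying $\partial\Omega$ to a point turns $\H\setminus\{\lambda_i\}$ into the $(N+1)$-pointed sphere), and that by the choice of $\mathcal T_\H$ every edge is either a boundary arc $a_k\subset\partial B_{|\lambda_k|}$ from $\lambda_k$ to $\partial\Omega$, or a chord $e_{jk}$ ($j<k$) from $\lambda_j$ to $\lambda_k$ contained in $\overline{A_j}\cup\cdots\cup\overline{A_{k-1}}$. The core of the proof is then the per-edge estimate
\[
\#(\widetilde\Gamma\cap e)\ \le\ 2\sum_{i=1}^{N-1}\cro_{A_i}(\widetilde\Gamma)\qquad\text{for every edge }e\text{ of }\mathcal T_\H,
\]
since summing over the $\le 3(N-1)$ edges gives exactly the claimed bound. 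To prove it I would argue by a charging mechanism: a point of $\widetilde\Gamma\cap e_{jk}$ sits at radius in $[|\lambda_j|,|\lambda_k|]$, hence in one of the layers $\overline{A_i}$ with $j\le i<k$; following the loop of $\widetilde\Gamma$ through it in both directions until it first leaves that layer yields a sub-arc of $\widetilde\Gamma$ which is either one of the disjoint crossing arcs of $A_i$ counted by $\cro_{A_i}(\widetilde\Gamma)$, or a return excursion into $\overline{A_i}$ which — $\widetilde\Gamma$ being taut — must surround one of the endpoint punctures $\lambda_i,\lambda_{i+1}$, and this in turn (here the constraint that the loops lie inside $\H$, so that a loop excluding an inner puncture must "come in" near $\partial\Omega$, is essential) forces a genuine radial crossing of a neighbouring annulus; minimality keeps this assignment at most $2$-to-$1$ onto the disjoint crossing arcs, giving the factor $2$ (this is exactly the correspondence between a crossing of the edge $(\lambda_3\lambda_4)$ and a crossing of the neighbouring annulus illustrated in the figure). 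The boundary arcs $a_k$ are handled the same way, their meetings with $\widetilde\Gamma$ being crossings of $\partial B_{|\lambda_k|}$ controlled by $\cro_{A_{k-1}}(\widetilde\Gamma)+\cro_{A_k}(\widetilde\Gamma)$ together with the number of loops surrounding $\lambda_k$, itself bounded by $\cro_{A_{k-1}}(\widetilde\Gamma)$ or $\cro_{A_k}(\widetilde\Gamma)$.

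The main obstacle is precisely this per-edge estimate: one has to show rigorously that a taut loop meeting a single radial edge of $\mathcal T_\H$ many times is forced to make correspondingly many radial crossings of the annuli $A_1,\dots,A_{N-1}$ — i.e.\ that no "free" complexity can hide within a single layer — and then book-keep the charging carefully enough to obtain the explicit constant $6(N-1)$; this needs an innermost-bigon / normal-arc analysis of $\widetilde\Gamma$ relative to $\mathcal T_\H$ combined with the planarity constraint that the loops stay in $\H$. The reduction to the taut representative, the edge classification, and the final summation are routine by comparison.
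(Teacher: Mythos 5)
Your proposal follows essentially the same strategy as the paper: pick a representative $\Gamma'$ homotopic to $\Gamma$ realizing $|\Gamma|_{\mathcal T_\H}$ while satisfying $\cro_{A_i}(\Gamma')\le\cro_{A_i}(\Gamma)$, prove a per-edge bound $\#(\Gamma'\cap e)\le 2\sum_i\cro_{A_i}(\Gamma')$ by following the loop through each intersection point and charging it to a radial crossing, then multiply by the $3(N-1)$ edges of $\mathcal T_\H$ and sum over the (disjoint) loops.

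The one place where the paper is tidier than your write-up is exactly the spot you flag as "the main obstacle." Instead of following the loop in both directions only until it "first leaves the layer" (which can produce a return excursion to the same circle, forcing you into the tautness-and-surrounded-puncture argument you sketch), the paper roots the loop $l_x$ at $x$ and sets $t_-$ to be the first backward hitting time of $\bigcup_i\partial B_{|\lambda_i|}$ and $t_+$ to be the first \emph{forward} hitting time of a circle of a \emph{different radius} from $|l_x(-t_-)|$. By construction the arc $l_x((-t_-,t_+])$ joins two circles of distinct radii, hence contains a genuine crossing of some $A_i$, and minimality of $\Gamma'$ caps its number of intersections with $e$ at $2$. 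This bypasses the return-excursion case analysis entirely; the rest of your proof (edge count $3(N-1)$, disjointness of loops, final bookkeeping) matches the paper's. So your route is valid and the same in spirit, but if you want the per-edge estimate to close cleanly you should adopt the asymmetric stopping rule ($t_-$: any circle; $t_+$: a circle of different radius) rather than the symmetric "leave the layer" rule, which is what creates the subtlety you worry about.
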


\begin{proof}
Suppose that 
\[
\Gamma'\in\operatornamewithlimits{argmin}_{\tilde{\Gamma}\text{: homotopic to $\Gamma$}}\#\{\tilde{\Gamma}\cap\mathcal T_\H\} ,
\]
such that
\[
|\Gamma|_{\mathcal T_\H}=\sum_{e\in\mathcal T_\H}\#\{\Gamma'\cap e\} \text{ and }\cro_{A_i}(\Gamma')\le\cro_{A_i}(\Gamma) \text{ for each }i\le N-1.
\]
For any $e\in\mathcal T_{\H}$ and $x\in\Gamma'\cap e$, denote by $l_x$ the loop in $\Gamma'$ that $x$ belongs to. Suppose that $l_x$ is rooted at $x$ and $l_x$ is parametrized by $\mathbb{R}$, denote by
\[
t_-:=\inf\{t\ge 0:l_x(-t)\in\cup_{i=1}^N \partial B_{|\lambda_i|}\}
\]
and
\[
 t_+:=\inf\{t\ge 0:l_x(t)\in\cup_{i=1}^N \partial B_{|\lambda_i|},\ |l_x(t)|\neq|l_x(t_-)|\}.
\]
It is not hard to see that $t_+$ exists and by the minimality of $\Gamma'$, there is at most one another intersection (if $x$ lies on one of the arcs $\partial B_{\lambda_i}$) of $l_x((-t_-,t_+])$ and $e$ except $x$. Therefore there exists $i\le N-1$ such that $l_x((-t_-,t_+])$ crosses $A_i$. By summing over all possibilities of $i$, we have
\[\#(l_x\cap e)\le 2\sum_{i=1}^{N-1}\cro_{A_i}(l_x).\]
Since $\mathcal T_\H$ has $3(N-1)$ edges, we further get
\[\sum_{e\in\mathcal T_\H}\#(l_x\cap e)\le 6(N-1)\sum_{i=1}^{N-1}\cro_{A_i}(l_x).\]
Sum over all the loops in $\Gamma$, notice that they are disjoint by definition, then
\[|\mathcal T_\H|=\sum_{e\in\mathcal T_\H}\#(\Gamma'\cap e)\le 6(N-1)\sum_{i=1}^{N-1}\cro_{A_i}(\Gamma')\le 6(N-1)\sum_{i=1}^{N-1}\cro_{A_i}(\Gamma).\]
\end{proof}
Using Lemma \ref{lm:complexity} and Theorem \ref{thm:main}, we obtain without difficulty the following super-exponential decay of the probability of the complexity. 
\begin{corollary}
    For any simply connected subdomain of $\H$, let $\cle_\kappa(\Omega)$ be a non-nested conformal loop ensemblle with $\kappa\in(\frac{8}{3},4]$ in $\Omega$. Then for any $s>0$,
    \[
    \sup_{\Omega\subseteq\H}\prob{|\cle_\kappa(\Omega)|_{\mathcal{T}_{\H}}>n}=O(s^n).
    \]
\end{corollary}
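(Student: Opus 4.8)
The plan is to deduce the corollary from Lemma~\ref{lm:complexity} and Theorem~\ref{thm:main} by a union bound; one may assume $s\in(0,1)$, since for $s\ge 1$ the bound is trivial (probabilities are at most $1$). Fix the triangulation $\mathcal T_\H$ as above, let $N$ be the (fixed) number of punctures, and let $A_i$, $i=1,\dots,N-1$, be the annuli appearing in Lemma~\ref{lm:complexity}: these are concentric annuli centered at the origin, each with radii ratio $|\lambda_{i+1}|/|\lambda_i|$ depending only on the punctures. A sample of $\cle_\kappa(\Omega)$ determines a macroscopic lamination (delete the loops surrounding at most one puncture and pass to homotopy classes), so Lemma~\ref{lm:complexity} gives, deterministically,
\[
|\cle_\kappa(\Omega)|_{\mathcal T_\H}\le 6(N-1)\sum_{i=1}^{N-1}\cro_{A_i}(\cle_\kappa(\Omega)).
\]
Hence on the event $\{|\cle_\kappa(\Omega)|_{\mathcal T_\H}>n\}$ the pigeonhole principle yields an index $i$ with $\cro_{A_i}(\cle_\kappa(\Omega))\ge n/(6(N-1)^2)$.

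Next I would take a union bound over the $N-1$ annuli,
\[
\prob{|\cle_\kappa(\Omega)|_{\mathcal T_\H}>n}\le\sum_{i=1}^{N-1}\prob{\cro_{A_i}(\cle_\kappa(\Omega))\ge \frac{n}{6(N-1)^2}},
\]
and bound each term using Theorem~\ref{thm:main}. Since each $A_i$ is centered at the origin, Theorem~\ref{thm:main} applies verbatim (the off-center version, Lemma~\ref{lm:cro_annuli}, is not needed) and gives, uniformly over simply connected $\Omega\subseteq\H$, that $\prob{\cro_{A_i}(\cle_\kappa(\Omega))\ge m}=O(t^m)$ for every $t>0$, with implied constant depending only on $\kappa$, $t$ and $|\lambda_{i+1}|/|\lambda_i|$. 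Given $s\in(0,1)$, set $t:=s^{6(N-1)^2}\in(0,1)$; then, since $t<1$, each summand is $O\!\left(t^{\lceil n/(6(N-1)^2)\rceil}\right)=O\!\left(t^{n/(6(N-1)^2)}\right)=O(s^n)$, and summing the finitely many terms preserves this. Taking the supremum over $\Omega$, already incorporated in Theorem~\ref{thm:main}, completes the proof.

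There is no genuine obstacle: all the substance lies in Lemma~\ref{lm:complexity}, which converts complexity into crossings of the concentric annuli $A_i$, and in Theorem~\ref{thm:main}, which delivers arbitrarily fast geometric decay of crossing probabilities uniformly in $\Omega$. The only points worth a sentence of care are that the $A_i$ are genuinely centered at the origin, so that Theorem~\ref{thm:main} applies as stated, and that the fixed puncture number $N$ enters only through the constant $6(N-1)^2$, which is absorbed by shrinking $t$ as a function of $s$ and $N$; neither affects the uniformity over $\Omega\subseteq\H$.
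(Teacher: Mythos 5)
Your proposal is correct and is precisely the deduction the paper intends by the sentence ``Using Lemma~\ref{lm:complexity} and Theorem~\ref{thm:main}, we obtain without difficulty\dots'': apply the deterministic bound from Lemma~\ref{lm:complexity}, pigeonhole to a single concentric annulus $A_i=A(|\lambda_i|,|\lambda_{i+1}|)$, and absorb the constant $6(N-1)^2$ by shrinking $t$ as a function of $s$ and $N$. Your observation that Theorem~\ref{thm:main} (rather than Lemma~\ref{lm:cro_annuli}) suffices because the $A_i$ are centered at the origin is also correct.
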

Now we are ready to conclude the main application of Theorem \ref{thm:main}. We will add superscripts to distinguish non-nested $\cle^\text{n-nested}_\kappa$ and nested $\cle^\text{nested}_\kappa$.
\begin{reptheorem}{cor:complexity}
 Let $\Theta_\Omega$ be a random sample of the nested $\cle_\kappa$, $\frac{8}{3}<\kappa\le4$, in $\Omega$ and let $\Theta^\delta_\Omega$ be the double-dimer loop ensemble on a Temperlean discretization $\Omega^\delta\subset\delta\mathbb{Z}^2$ of $\Omega$. Denote by $\Theta\sim\Gamma$ the event that the macroscopic lamination of a loop ensemble $\Theta$ is $\Gamma$. Then
\[
\mathbb{P}_{\cle^\text{nested}_\kappa}[\Theta_\Omega\sim\Gamma]=O(R^{-|\Gamma|}) \text{  as }|\Gamma| \rightarrow\infty \text{  for all }R>0.
\]
Therefore by \cite[Corollary 1.7]{basok-chelkak},  $\mathbb{P}_{\text{double-dimer}}[\Theta^\delta_\Omega\sim\Gamma]\to\mathbb{P}_{\cle^\text{nested}_4}[\Theta_\Omega\sim\Gamma]$ as $\delta\to 0$ for all macroscopic laminations $\Gamma$. 
\end{reptheorem}

\begin{proof}
We can upper-bound the complexity of the nested $\cle_\kappa$ by looking separately at the collection of loops $\Gamma_\Lambda=\{\gamma_1,\ldots,\gamma_N\}$ surrounding  the same subset $\Lambda\subseteq\{\lambda_1,\ldots,\lambda_N\}$ which contains at least two punctures. In addition, we order loops in $\Gamma_\Lambda$ such that $\gamma_{i+1}$ lies inside $\gamma_i$. By abusing the notation slightly we denote by $\mathbb{P}[|\gamma_1|=n_1,\ldots,|\gamma_i|=n_i]$ the quantity 
\[
\sup_{\Omega\subseteq \H}\mathbb{P}[ \gamma_1,\ldots,\gamma_i\in \cle^{\text{n-nested}}_\kappa(\Omega): \gamma_1,\ldots,\gamma_i\text{ encircles } \Lambda \text{ and } |\gamma_1|_{\mathcal{T}_\H}=n_1,\ldots,|\gamma_i|_{\mathcal{T}_\H}=n_i].
\]
Note that the loops in $\Gamma_\Lambda$ are homotopic to each other since they do not intersect. In particular, their complexities coincide. Therefore 
\[
\mathbb{P}[|\gamma_1|=n_1,\cdots,|\gamma_j|=n_j] \text{ is non-zero only if } n_1=\ldots=n_j.
\]
Using independence of the loop ensemble inside $\gamma_{\lfloor j/2 \rfloor}$, for any $C>0$, we have

\begin{align*}
    &\prob{|\gamma_1|=\ldots=|\gamma_j|=n}\cdot e^{Cjn}\\ \le
    &\prob{|\gamma_1|>0,\ldots,|\gamma_{\lfloor j/2 \rfloor}|>0}\cdot \prod_{i=\lfloor j/2 \rfloor+1}^j \left(\prob{|\gamma_i|=n}\cdot e^{2Cn}\right)\\ \le
    & \exp(-c(j/2)^{3/2})\cdot\prod_{i=\lfloor j/2 \rfloor+1}^j \left(\prob{|\gamma_i|=n}\cdot e^{2Cn}\right),
\end{align*}
where the exponential term is due to \cite[Lemma 21]{dubedat} on the tail of the distribution of the number of loops surrounding two points.

Because the complexity of $\gamma_{i+1}$ is less than the complexity of the non-nested $\cle_\kappa$ inside $\gamma_i$, this implies that
\begin{align*}
\mathbb{E}\left[\exp\left( C\cdot |\Gamma_\Lambda|_{\mathcal{T}_\H}) \right)\right]
\le \sum_{j\ge 0} e^{-c(j/2)^{3/2}} \sup_{U\subseteq\Omega}\E{ \exp\left( 2C\cdot |\cle^\text{n-nested}_\kappa(U)|_{\mathcal{T}_\H} \right)}^{j/2+1},
\end{align*} 
which is finite due to Theorem \ref{thm:main} and Lemma \ref{lm:complexity}. In particular $\prob{|\Gamma_\Lambda|_{\mathcal{T}_\H}>n}$ decays super-exponentially by Markov's inequality. Then Corollary \ref{cor:complexity} follows by taking the sum of $|\Gamma_\Lambda|$ for all $\Lambda\subseteq\{\lambda_1,\ldots,\lambda_N\}$ containing at least two punctures..
\end{proof}

\section*{Acknowledgements}
Yijun Wan is grateful to Dmitry Chelkak for suggesting this topic and numerous helpful suggestions. Part of this work took place during a visit of Yijun Wan to Tsinghua University hosted by Hao Wu. The authors are grateful for her fruitful ideas during the early stage of preparing this paper. We thank Mikhail Basok for his careful reading and comments throughout the preparation of this paper. We also would like to thank Federico Camia, Nicolas Curien for helpful comments and remarks. Last but not least, we thank the anonymous reviewer for his/her careful reading of our manuscript and many insightful comments and suggestions.


\begin{thebibliography}{0}


\bibitem{ahlfors}
Ahlfors, A. 
\textit{Lectures on Quasiconformal Mappings.}

\bibitem{basok-chelkak}
Basok, M. and D. Chelkak.
``Tau functions {à} la Dub{é}dat and probabilities of cylindrical events for double-dimers and {CLE}(4).''
\textit{J. Eur. Math. Soc. (JEMS)}, 23, no. 8
(2021): 2787--2832.

\bibitem{benoist-hongler}
Benoist, S. and C. Hongler. 
``The scaling limit of critical Ising interfaces is CLE(3).'' 
\textit{Ann. Probab.}, 47, no.4 (2019): 2049-2086.


\bibitem{camia-newman}
Camia, F. and C. M. Newman.
``Two-Dimensional Critical Percolation: The Full Scaling Limit.''
\textit{Commun. Math. Phys.}, 268 (2006): 1–-38.

\bibitem{dubedat}
Dub{é}dat, J. 
``Double dimers, conformal loop ensembles and isomonodromic deformations.'' 
\textit{J. Eur. Math. Soc.}, 21, no.1 (2019): 1-54.

\bibitem{hongler}
Hongler, C. and S. Smirnov.
``Critical percolation: the expected number of clusters in a rectangle.''
\textit{Probab. Theory Related Fields}, 151, no. 3-4 (2001): 735–756.

\bibitem{kemppainen-smirnovII}
Kemppainen, A. and S. Smirnov.
``Conformal invariance in random cluster models. II. Full scaling limit as a branching SLE.'' arXiv:1609.08527.

\bibitem{kemppainen-smirnov}
Kemppainen, A. and S. Smirnov.
``Random curves, scaling limits and Loewner evolutions.''
\textit{Ann. Probab.}, 45, no.2 (2017): 698--779.

\bibitem{kenyon}
Kenyon, R.
``Conformal invariance of loops in the double-dimer model.''
\textit{Commun. Math. Phys.}, 326 (2014): 477–497.

\bibitem{kozdron-lawler}
Kozdron, M. and G. Lawler.
``Estimates of Random Walk Exit Probabilities and Application to Loop-Erased Random Walk.''
\textit{Electron. J. Probab.}, 10 (2005): 1442-1467.

\bibitem{lawler-werner}
Lawler, G. F. and W. Werner.
``The Brownian loop soup.''
\textit{Probab. Theory Relat. Fields}, 128 (2004): 565-–588.

\bibitem{lupu}
Lupu, T.
``Convergence of the two-dimensional random walk loop-soup clusters to CLE.''
\textit{J. Eur. Math. Soc.}, 21, no.4 (2018): 1201–-1227.

\bibitem{lupu-gff}
Lupu, T.
``From loop clusters and random interlacements to the free field.'' 
\textit{Ann. Probab.} 44, no.3 (2016): 2117-2146, 

\bibitem{miller-watson-wilson}
Miller, J., Watson, S. S. and D.B. Wilson.
`` The conformal loop ensemble nesting field.''
\textit{Probab. Theory Relat. Fields}, 163 (2015): 769--801.

\bibitem{pitman-yor}
Pitman, J. and M. Yor.
``Ito’s excursion theory and its applications''
\textit{Japan. J. Math.}, 2 (2007): 83–96.

\bibitem{qian-werner}
Qian, W. and W. Werner.
``Decomposition of Brownian loop-soup clusters.''
\textit{J. Eur. Math. Soc.}, 21 (2019): 3225-3253. 

\bibitem{sheffield-werner}
Sheffield, S. and W. Werner. 
``Conformal loop ensembles: the Markovian characterization and the loop-soup construction.''
\textit{Ann. of Math.}, 176, no.3 (2012): 1827--1917.



\bibitem{sheffield}
Sheffield, S. 
``Exploration trees and conformal loop ensembles.''
\textit{Duke Math. J.}, 147, no.1 (2009): 79--129.


\bibitem{brug-camia-lis}
van de Brug, T., Camia, F. and M. Lis.
``Random walk loop soups and conformal loop ensembles.''
\textit{Probab. Theory Relat. Fields}, 166 (2016): 553–584.

\bibitem{berg}
van den Berg, J. 
``A note on disjoint-occurrence inequalities for marked Poisson point processes.''
\textit{J. Appl. Prob.}, 30, no.2 (1996): 420--426.


\bibitem{wu-zhan}
Wu, H and D. Zhan. 
``Boundary arm exponents for {SLE}.''
\textit{Electron. J. Probab.}, 22 (2017): 1-26.


\end{thebibliography}
\end{document}